\newtheorem{thm}{Theorem}[section]
 \newtheorem{prop}[thm]{Proposition}
 \newtheorem{rem}[thm]{Remark}
\newtheorem{ex}{Example}
\begin{document}

\title{Existence of solutions for a singular double phase in Sobolev-Orlicz spaces with variable exponents in a complete manifold
}

\titlerunning{Existence of solutions for a singular double phase}        

\author{Ahmed Aberqi  \and Jaouad Bennouna \and Omar Benslimane  \and Maria Alessandra Ragusa$^{\ast}$.
}


\institute{Ahmed Aberqi \at
              Laboratory LAMA, Sidi Mohamed Ben Abdellah University, National School of Applied Sciences Fez, Morocco. \\
              \email{aberqi$\_$ahmed@yahoo.fr}           
           \and
             Jaouad Bennouna\at
              Laboratory LAMA, Department of Mathematics, Sidi Mohamed Ben Abdellah University, Faculty of Sciences Dhar El Mahraz, B.P 1796 Atlas Fez, Morocco.\\
              \email{jbennouna@hotmail.com}  
            \and
         Omar Benslimane \at 
         Laboratory LAMA, Department of Mathematics, Sidi Mohamed Ben Abdellah University, Faculty of Sciences Dhar El Mahraz, B.P 1796 Atlas Fez, Morocco. \\
         \email{omar.benslimane@usmba.ac.ma}  
         \and
         Maria Alessandra Ragusa \at   
         Dipartimento di Matematica e Informatica, Universitá di Catania, Viale A.Doria 6, 95125 Catania, Italy.\\
         RUDN University, 6 Miklukho, Maklay St, Moscow, Russia 117198.\\
         \email{mariaalessandra.ragusa@unict.it}
}

\date{Received: date / Accepted: date}

\maketitle

\begin{abstract}
The purpose of this paper is to study a class of double phase problems,  with a singular term and a superlinear parametric term on the right-hand side. Using the method of Nehari manifold combined with the fibering maps, we prove that for all small values of the parameter $\lambda > 0,$ there exist at least two non-trivial positive solutions. Our results extend the previous works Papageorgiou, Repov{\v{s}}, and Vetro \cite{papageorgiou2020positive} and Liu, Dai, Papageorgiou, and Winkert \cite{liu2021existence}, from the case of Musielak-Orlicz Sobolev space, when exponents $p$ and $q$ are constant, to the case of Sobolev-Orlicz spaces with variable exponents in a complete manifold.
\keywords{Existence of solutions \and Double phase operator \and Nehari manifold \and singular problems \and Sobolev-Orlicz Riemannian manifold with variable exponants}
 \subclass{MSC 35J60 \and MSC 58J05}
\end{abstract}

\section{Introduction}\label{section1}
\label{intro}
Let $ ( M, g ) $ be a smooth complete compact Riemannian N-manifold. In this article, we are addressed in proving the existence of at least two weak solutions to the following singular double phase problem.
$$ ( \mathcal{P} ) \, \begin{cases}
- \, \mbox{div} ( | D \mathrm{u} ( z )|^{p( z ) - 2} D \mathrm{u} ( z ) + \mu ( z ) | D \mathrm{u} ( z ) |^{q( z ) - 2} D \mathrm{u} ( z ) \, ) \\ \hspace*{0.6cm} = \frac{g( z )}{\mathrm{u} ( z )^{\gamma ( z )}} + \lambda \, | \mathrm{u} ( z ) |^{r( z ) - 1}  & \text{in \,\,M }, \\[0.3cm]
\, \mathrm{u} \,  > \, 0  & \text{in \,\,M },\\
\, \mathrm{u} \,  = \, 0  & \text{on \,$\partial$M }.
\end{cases} $$ 
We suppose the subsequent assumptions: 
\begin{itemize}
\item[(i)] The variables exponents $p, q, r \in C( \overline{M} )$ and satisfy the following assumptions: 
\begin{equation}\label{1}
1 <  q^{-} \leq q^{+} < p^{-} \leq p^{+} < r^{-} \leq r^{+} < + \infty,
\end{equation}
where $ q^{+} = \displaystyle \sup_{z \in \overline{M}} q( z ), \,\, q^{-} = \inf_{z \in \overline{M}} q( z ), \,\, p^{+} = \sup_{z \in \overline{M}} p( z ), \,\, p^{-} = \inf_{z \in \overline{M}} p( z ), \\ r^{+} = \sup_{z \in \overline{M}} r( z ), \,\, \mbox{and} \,\, r^{-} = \inf_{z \in \overline{M}} r( z ).$
\begin{equation}\label{2}
\frac{p^{-}}{q^{+}} < 1 + \frac{1}{N}.
\end{equation}
\item[(ii)] $\mu : \overline{M} \rightarrow [1, + \infty )$ is Lipschitz continuous.
\item[(iii)] $ g( z ) \in L^{\infty} ( M )$ and $ g( z ) \geq 0$ for a.a $ z \in M $ with $ g \not\equiv 0$. 
\item[(iv)] $0 <  \gamma ( z ) \in C( \overline{M} ), \,\, 0 < \gamma^{-} \leq \gamma^{+} < 1.$
\item[(v)] $ \lambda$ is a positive parameter.
\end{itemize}

Off late, nonlinear elliptic equations and variational problems involving variable exponents growth conditions have become hugely popular area of investigation owing to its manifold applications including  the study of fluid filtration in porous media, constrained heating, elasto-plasticity, optimal control, financial mathematics and others. Interested readers may refer to \cite{chen2006variable, gwiazda2008non, ruuvzivcka2004modeling, zhikov2004density}  and the references therein for more background of applications. Originally, while studying the behaviour of strongly anisotropic materials, Zhikov discovered that their hardening properties changed radically with the point, which is called the Lavrentiev phenomenon, see for example \cite{zhikov1987averaging, zhikov1995lavrentiev, zhikov1997some, jikov2012homogenization}. In order to describe this phenomenon, he introduced the functional 
\begin{equation}\label{omar}
 \mathrm{u} \longmapsto \int_{\Omega} \big( | D \mathrm{u} |^{p} + \mu ( z ) \, | D \mathrm{u} |^{q} \big) \,\, dx,
\end{equation}
where the integrand switches two different elliptic behaviours. \\

Several interesting works have been carried out on the double phase problem with a Dirichlet boundary condition. For example, Liu and Dai \cite{liu2018existence} studied the following problem 
$$ \begin{cases}
- \, \mbox{div} ( | D \mathrm{u} ( z )|^{p - 2} D \mathrm{u} ( z ) + a ( z ) | D \mathrm{u} ( z ) |^{q - 2} D \mathrm{u} ( z ) \, ) = f( z, \mathrm{u} ( z ) )  & \text{in \,\,$\Omega$ }, \\[0.3cm]
\, \mathrm{u} \,  = \, 0  & \text{on \,$\partial \Omega$ },
\end{cases} $$ 
and proved the existence and the multiplicity of the results, with the sign-changing solutions by variational method. A similar treatment has been recently done by Gasi{\'n}ski and Papageorgiou in \cite{gasinski2019constant} via the Nehari manifold method. Following this direction, the authors in \cite{papageorgiou2020ground} have shown that the double phase problem driven by the sum of the $p$-Laplace operator and a weighted $q$-Laplacian $( q < p )$, with a weight function which is not bounded away from zero and the reaction term, is $( p - 1 )-$superlinear, has a ground state solution of constant sign and a nodal (sign-changing) solution, employing the Nehari method. In particular, using the theory of pseudomonotone operators, Gasi{\'n}ski, and Winkert in \cite{gasinski2020existence} proved the existence and uniqueness of a weak solution to quasilinear elliptic equations with double phase phenomena and a reaction term depending on the gradient, under quite general assumptions on the convection term and towering some linear conditions on the gradient variable. Finally, for a deeper comprehension, we refer the reader to \cite{aberqi2019existence, acerbimingione, benslimane2020existence1, benslimane2020existence, benslimane2020existence2, benslimane2020existence3, benslimane2021existence, duzaarmingione} and the references therein.

Next, we would like to mention some papers of double phase problems with singularity. Papageorgiou, Repov{\v{s}}, and Vetro in \cite{papageorgiou2020positive} studied the existence of positive solutions for a class of double phase Dirichlet equations which has the combined effects of a singular term and of a parametric super-linear term. Readers may refer to \cite{papageorgiou2020positive, liu2021existence} for ideas and techniques developed in order to guarantee the existence of at least two non-trivial positive solutions to double phase problems driven by a singularity.

Considering another novel aspect, we go back to the double phase problem with variable exponents which few authors consider. Ragusa and Tachikawa in \cite{ragusa2019regularity}  are the first ones who have achieved the regularity theory for minimizers of \eqref{omar} with variable exponents. Moreover, in \cite{tachikawa2020boundary} Tachikawa, provides the Hölder continuity up to the boundary of minimizers of so-called double phase functional with variable exponents, under suitable Dirichlet boundary conditions. Recently, Aberqi, Bennouna, Benslimane, and Ragusa in \cite{benslimane2020existence4} studied the existence of non-negative non-trivial solutions for a class of double-phase problems where the source term is a Caratheodory function that satisfies the Ambrosetti-Rabinowitz type condition in the framework of Sobolev-Orlicz spaces with variable exponents in complete manifold, using the Nehari manifold and some variational techniques. Furthermore, they proved the Hölder inequality, continuous and compact embedding. The readers may refer to the work due to \cite{aubin1982nonlinear, benslimane2020existence1, gaczkowski2016sobolev, hebey2000nonlinear, guo2015dirichlet, papageorgiou2019nonlinear} and the references given there.\\

Inspired by the above-mentioned papers, we study the problem $( \mathcal{P} )$ which contains a singular term and a parametric superlinear term in the framework of Sobolev spaces with variable exponents in Complete manifolds. To the best of our knowledge, the results presented here are new. However, we address the challenges presented by the fact that the $p(z)-$ laplacian and $ q( z )-$ laplacian operators possess more complicated nonlinearities than the  $p$-laplacian and $q$-laplacian operator, since $\Delta_{p( z )}$ and $ \Delta_{q( z )}$ are not homogeneous. Moreover, we can not use Lagrange Multiplier Theorem in many problems involving this operator, which shows that our problem has more difficulty than the operators $p$-Laplace type.\\

Now, we define the solution of the problem $( \mathcal{P} )$ in a weaker sense.

\begin{definition}
A function $\mathrm{u} \in W_{0}^{1, q( z )} ( M )$ is said to be a weak solution if $ \mathrm{u} > 0$ for a.a $ z \in M $ and 
\begin{align*}
&\int_{M} \bigg( | D \mathrm{u} ( z ) |^{p( z ) - 2} D \mathrm{u} ( z ) + \mu ( z ) \, | D \mathrm{u} ( z ) |^{q( z ) - 2} D \mathrm{u}( z ) \, \bigg) \, D \varphi ( z ) \,\, dv_{g} ( z )\\& = \int_{M} g( z ) \, | \mathrm{u}  ( z ) |^{- \gamma ( z )} \, \varphi ( z ) \,\, dv_{g} ( z ) + \lambda \int_{M} | \mathrm{u} ( z ) |^{r( z ) -1 } \varphi ( z ) \,\, dv_{g} ( z ),
\end{align*}
is satisfied for all $ \varphi \in \mathcal{D}( M ).$ Where $\mathcal{D}( M )$ denote the space of $C^{\infty}$ functions with compact support in $M.$

\end{definition}

\begin{remark}
For the singular term, the weak solution is by definition a function $ \mathrm{u} \in W_{0}^{1, q( z )} ( M )$ such that $\mathrm{u} ( z )^{- \gamma ( z )} \, \varphi ( z ) \in L^{1} ( M )$ for every $ \varphi \in \mathcal{D}( M )$. Hence, the above definition of a weak solution is well-defined.
\end{remark}

\begin{remark}
In the case when $ q( z ) < r( z ),$ we have the validity of the coercivity properties of the functional energy associated with the problem $( \mathcal{P} )$ ( see Lemma \ref{lemma1} below ) which guaranteed the compactness for sequences with uniformly bounded energy. Unfortunately, in the case when $q( z ) = r( z )$ would cause our coercivity properties to fail. 
\end{remark}

The main result of our paper can be stated as follows.
\begin{theorem} \label{theo1}
Under assumptions (i)-(v), we assume that the complete N-manifold $( M, g )$ has property $B_{vol} ( \lambda, v )$. Then, there exists $\tilde{\lambda}^{*}_{0} > 0$ such as for all $ \lambda \in ( 0, \tilde{\lambda}^{*}_{0} ]$, the problem $( \mathcal{P} )$ has at least two positive solutions $\tilde{\mathrm{u}}^{*}, \, \tilde{v}^{*} \in W_{0}^{1, q( z )} ( M )$ such as $ J_{\lambda} ( \tilde{\mathrm{u}}^{*} ) < 0 \leq J_{\lambda} ( \tilde{v}^{*} ).$
\end{theorem}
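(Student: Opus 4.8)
The plan is to run the Nehari manifold and fibering map machinery, taking care of the two features that distinguish $(\mathcal{P})$ from the constant-exponent situation: the singular term $g\,\mathrm{u}^{-\gamma(z)}$, which makes the energy only continuous (not $C^1$), and the non-homogeneity of $\Delta_{p(z)}$ and $\Delta_{q(z)}$, which forbids solving the fibering equation explicitly. First I would introduce the energy functional on $X:=W_{0}^{1,q(z)}(M)$ (equivalently the Sobolev--Orlicz space with variable exponents attached to $\mathcal{H}(z,t)=t^{p(z)}+\mu(z)t^{q(z)}$),
\begin{align*}
J_{\lambda}(\mathrm{u}) = {}& \int_{M} \frac{1}{p(z)}\,|D\mathrm{u}|^{p(z)}\,dv_{g} + \int_{M} \frac{\mu(z)}{q(z)}\,|D\mathrm{u}|^{q(z)}\,dv_{g} \\
& {}- \int_{M} \frac{g(z)}{1-\gamma(z)}\,|\mathrm{u}|^{1-\gamma(z)}\,dv_{g} - \lambda\int_{M} \frac{1}{r(z)}\,|\mathrm{u}|^{r(z)}\,dv_{g},
\end{align*}
which is finite on $X$ thanks to the Hölder inequality and the continuous/compact embeddings of $X$ into the variable-exponent Lebesgue spaces (valid under property $B_{vol}(\lambda,v)$, see \cite{benslimane2020existence4}) together with $0<\gamma^{+}<1<r^{-}$. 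For $\mathrm{u}\in X$ with $\mathrm{u}\ge 0$, $\mathrm{u}\not\equiv 0$, I set $\varphi_{\mathrm{u}}(t):=J_{\lambda}(t\mathrm{u})$, $t>0$, and
\begin{align*}
\mathcal{N}_{\lambda}&:=\{\mathrm{u}\in X\setminus\{0\}:\varphi_{\mathrm{u}}'(1)=0\},\\
\mathcal{N}_{\lambda}^{\pm}&:=\{\mathrm{u}\in\mathcal{N}_{\lambda}:\pm\varphi_{\mathrm{u}}''(1)>0\},\qquad \mathcal{N}_{\lambda}^{0}:=\{\mathrm{u}\in\mathcal{N}_{\lambda}:\varphi_{\mathrm{u}}''(1)=0\}.
\end{align*}

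Next I would establish three structural facts. (a) \emph{Coercivity}: by Lemma \ref{lemma1}, $J_{\lambda}$ is coercive and bounded below on $\mathcal{N}_{\lambda}$, using $q^{+}<p^{-}<r^{-}$ and the embeddings. (b) \emph{A threshold $\tilde{\lambda}^{*}_{0}>0$ with $\mathcal{N}_{\lambda}^{0}=\emptyset$ for $\lambda\in(0,\tilde{\lambda}^{*}_{0}]$}: assuming $\mathrm{u}\in\mathcal{N}_{\lambda}^{0}$ and combining $\varphi_{\mathrm{u}}'(1)=0$ with $\varphi_{\mathrm{u}}''(1)=0$, one eliminates the singular and the $r(z)$-modulars in turn and, bounding every variable-exponent modular from above and below through $q^{-}\le q(z)\le q^{+}<p^{-}\le p(z)\le p^{+}<r^{-}\le r(z)\le r^{+}$ and invoking the Sobolev inequality, one reaches a contradiction once $\lambda$ is small. (c) \emph{Fibering geometry}: since $\gamma^{+}<1$ and $\gamma^{-}>0$ one has $\varphi_{\mathrm{u}}(0^{+})=0$ and $\varphi_{\mathrm{u}}'(t)\to-\infty$ as $t\to 0^{+}$, while $r^{-}>p^{+}$ gives $\varphi_{\mathrm{u}}(t)\to-\infty$ as $t\to+\infty$; hence for $\lambda$ small $\varphi_{\mathrm{u}}$ has exactly one local minimum $t^{+}(\mathrm{u})$ with $\varphi_{\mathrm{u}}(t^{+}(\mathrm{u}))<0$ and exactly one local maximum $t^{-}(\mathrm{u})>t^{+}(\mathrm{u})$, yielding unique projections of the half-line $\{t\mathrm{u}:t>0\}$ onto $\mathcal{N}_{\lambda}^{+}$ and $\mathcal{N}_{\lambda}^{-}$. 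In the variable-exponent setting (b) and (c) cannot be read off an algebraic identity and must come from monotonicity/sign analysis of auxiliary functions of $t$; this is the technical heart of the proof.

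Then I would carry out the two constrained minimizations. Put $m_{\lambda}^{\pm}:=\inf_{\mathcal{N}_{\lambda}^{\pm}}J_{\lambda}$; from (c) we have $m_{\lambda}^{+}<0$, and, shrinking $\tilde{\lambda}^{*}_{0}$ if necessary, the fibering geometry also gives $m_{\lambda}^{-}\ge 0$. Take a minimizing sequence for $m_{\lambda}^{+}$: it is bounded by (a), so up to a subsequence it converges weakly in $X$ and a.e.\ to some $\tilde{\mathrm{u}}^{*}$; the compact embeddings give continuity of the singular and the $r(z)$-terms, weak lower semicontinuity of the double-phase modular handles the gradient part, and $m_{\lambda}^{+}<0$ forces $\tilde{\mathrm{u}}^{*}\ne 0$; a fibering-map argument (using (b), so that $\mathcal{N}_{\lambda}^{+}$ and $\mathcal{N}_{\lambda}^{-}$ are separated) then gives $\tilde{\mathrm{u}}^{*}\in\mathcal{N}_{\lambda}^{+}$ and $J_{\lambda}(\tilde{\mathrm{u}}^{*})=m_{\lambda}^{+}<0$. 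The same scheme applied to $m_{\lambda}^{-}$ produces $\tilde{v}^{*}\in\mathcal{N}_{\lambda}^{-}$ with $J_{\lambda}(\tilde{v}^{*})=m_{\lambda}^{-}\ge 0$; in particular $\tilde{\mathrm{u}}^{*}\ne\tilde{v}^{*}$ since their energies have opposite sign (or are separated by the gap $\mathcal{N}_{\lambda}^{0}=\emptyset$). Since replacing $\mathrm{u}$ by $|\mathrm{u}|$ does not increase $J_{\lambda}$, we may take $\tilde{\mathrm{u}}^{*},\tilde{v}^{*}\ge 0$.

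Finally I would show these minimizers solve $(\mathcal{P})$ weakly and are positive. For $0\le\varphi\in\mathcal{D}(M)$ and small $\varepsilon>0$, evaluating $J_{\lambda}$ at $t^{+}(\tilde{\mathrm{u}}^{*}+\varepsilon\varphi)(\tilde{\mathrm{u}}^{*}+\varepsilon\varphi)$ (which by (c) stays near $\tilde{\mathrm{u}}^{*}$) against $J_{\lambda}(\tilde{\mathrm{u}}^{*})=m_{\lambda}^{+}$, dividing by $\varepsilon$, and letting $\varepsilon\to 0^{+}$ yields
\begin{align*}
&\int_{M}\big(|D\tilde{\mathrm{u}}^{*}|^{p(z)-2}D\tilde{\mathrm{u}}^{*}+\mu(z)\,|D\tilde{\mathrm{u}}^{*}|^{q(z)-2}D\tilde{\mathrm{u}}^{*}\big)\,D\varphi\,dv_{g} \\
&\qquad\ge\int_{M} g(z)\,(\tilde{\mathrm{u}}^{*})^{-\gamma(z)}\,\varphi\,dv_{g}+\lambda\int_{M}(\tilde{\mathrm{u}}^{*})^{r(z)-1}\,\varphi\,dv_{g},
\end{align*}
in particular $g(z)(\tilde{\mathrm{u}}^{*})^{-\gamma(z)}\varphi\in L^{1}(M)$; a standard argument (testing with $(\tilde{\mathrm{u}}^{*}+\varepsilon\psi)^{+}$ for $\psi\in\mathcal{D}(M)$ and using $\tilde{\mathrm{u}}^{*}\in\mathcal{N}_{\lambda}$) upgrades this to the equality in the Definition, so $\tilde{\mathrm{u}}^{*}$ is a weak solution; since its right-hand side is strictly positive wherever $\tilde{\mathrm{u}}^{*}>0$, a strong maximum principle / Harnack inequality for the double-phase operator gives $\tilde{\mathrm{u}}^{*}>0$ a.e.\ on $M$. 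The identical argument applies to $\tilde{v}^{*}$, and as $J_{\lambda}(\tilde{\mathrm{u}}^{*})<0\le J_{\lambda}(\tilde{v}^{*})$ the two positive weak solutions are distinct, proving Theorem \ref{theo1}. I expect the fibering analysis in (b)--(c) for the non-homogeneous operator, and the passage to the limit in the singular term (both in the minimizations and in the Euler--Lagrange identity), to be the main obstacles.
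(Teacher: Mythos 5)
Your proposal is correct and follows essentially the same route as the paper: the Nehari decomposition $\mathcal{N}_\lambda = \mathcal{N}_\lambda^+ \cup \mathcal{N}_\lambda^0 \cup \mathcal{N}_\lambda^-$ via the second derivative of the fibering map, coercivity of $J_\lambda|_{\mathcal{N}_\lambda}$, emptiness of $\mathcal{N}_\lambda^0$ and nonemptiness of $\mathcal{N}_\lambda^\pm$ for small $\lambda$ through the auxiliary functions of $t$, minimization on each part, and the directional-perturbation argument showing the minimizers are weak solutions are exactly the contents of Lemmas \ref{lemma1}--\ref{lemma9}. The only (harmless) variations are that you phrase $\mathcal{N}_\lambda^\pm$ directly via $\varphi_{\mathrm{u}}''(1)$ rather than the explicit integral conditions, and you spell out the positivity step via a maximum principle more explicitly than the paper does.
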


The context of the paper is organized as follows. In section \ref{sec2} we will recall the definitions and some properties of Sobolev spaces with variable exponents and Sobolev spaces with variable exponents in complete manifolds. The readers can consult the following papers \cite{aubin1982nonlinear, benslimane2020existence1, benslimane2020existence, benslimane2020existence2, benslimane2020existence3, hebey2000nonlinear, trudinger1968remarks} for details. In section \ref{sec3}, we introduce the Nehari manifold associated with $( \mathcal{P} )$ and we study three parts, corresponding to local minima, local maxima and the points of inflection. Moreover, we show the existence of two non-trivial positive solutions when the parameter $ \lambda > 0$ is sufficiently small.

\section{Notations and Basic Properties}\label{sec2}
In order to discuss the problem $( \mathcal{P} )$, we need some facts on spaces $W_{0}^{1, q( x )} ( \Omega ) $ where $\Omega$ is an open subset of $ \mathbb{R}^{N}$ and $ W_{0}^{1, q( x )} ( M )$ which are called the Sobolev spaces with variable exponents and the Sobolev spaces with variable exponents in complete manifolds setting. For this reason, we will recall some properties involving the above spaces, which can be found in \cite{aubin1982nonlinear, benslimane2020existence1, fan2001spaces, gaczkowski2016sobolev, hebey2000nonlinear, guo2015dirichlet} and references therein.
\subsection{Sobolev spaces with variable exponents}
Given $ \Omega \subset \mathbb{R}^{N} $ a bounded open domain, with $ N \geq 2.$\\
Let $q(\cdot): \Omega \rightarrow (1, \infty )$ be a measurable function, we define the real numbers $q^{+}$ and $q^{-}$ as follows
$$q^{+} = ess \,sup \{ \, q( z );\, z \in \Omega \, \} \,\, \mbox{and} \,\, q^{-} = ess\, inf \{ \, q( z );\, z \in \Omega \, \}.$$
\begin{definition} \cite{fan2001spaces}
We define the Lebesgue space with variable exponent $ L^{q(\cdot)} ( \Omega )$ as follows
$$L^{q(\cdot)} ( \Omega ) = \bigg \{ \mathrm{u} : \Omega \rightarrow \mathbb{R}; \, \rho _{q(\cdot)} ( \mathrm{u} ) = \int_{\Omega} |\, \mathrm{u}( z )\,|^{q( z )} \,\, dz < + \infty  \, \bigg \}, $$
endowed with the Luxemburg norm $$ ||\, \mathrm{u} \,||_{q(\cdot)} = \inf \bigg\{ \, \mu > 0: \, \rho_{q(\cdot)} \bigg( \frac{\mathrm{u}}{\mu} \bigg) \leq 1 \, \bigg\}, $$ 
if $ q^{+} < + \infty.$
\end{definition}
\begin{prop} \cite{fan2001spaces}
The space $ ( L^{q(\cdot)} ( \Omega ), \, ||\,.\,||_{q(\cdot)} )$ is a separable Banach space, and uniformly convex for $ 1 < q^{-} \leq q^{+} < +\infty,$  hence reflexive.
\end{prop}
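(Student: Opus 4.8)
The plan is to establish the assertions in the order (1) normed space, (2) completeness, (3) separability, (4) uniform convexity, and then deduce (5) reflexivity from (4). The constant companion of the argument is the interplay between the Luxemburg norm and the modular $\rho_{q(\cdot)}(\mathrm{u})=\int_{\Omega}|\mathrm{u}(z)|^{q(z)}\,dz$, so I would first record the basic relations. Because $q^{+}<+\infty$, the modular $\rho_{q(\cdot)}$ is convex and even (i.e.\ $\rho_{q(\cdot)}(-\mathrm{u})=\rho_{q(\cdot)}(\mathrm{u})$), vanishes only at $\mathrm{u}=0$, and is finite on $L^{q(\cdot)}(\Omega)$; hence the gauge $\|\mathrm{u}\|_{q(\cdot)}$ is finite, absolutely homogeneous, and subadditive (the triangle inequality being a direct consequence of the convexity of $\rho_{q(\cdot)}$), and $\|\mathrm{u}\|_{q(\cdot)}=0$ forces $\rho_{q(\cdot)}(n\mathrm{u})\le1$ for every $n\in\mathbb{N}$, so $\mathrm{u}=0$ a.e. I would then prove the standard sandwich estimates
\[
\min\{\|\mathrm{u}\|_{q(\cdot)}^{q^{-}},\,\|\mathrm{u}\|_{q(\cdot)}^{q^{+}}\}\;\le\;\rho_{q(\cdot)}(\mathrm{u})\;\le\;\max\{\|\mathrm{u}\|_{q(\cdot)}^{q^{-}},\,\|\mathrm{u}\|_{q(\cdot)}^{q^{+}}\},
\]
together with the equivalences $\|\mathrm{u}\|_{q(\cdot)}\le1\Leftrightarrow\rho_{q(\cdot)}(\mathrm{u})\le1$ and $\|\mathrm{u}\|_{q(\cdot)}=1\Leftrightarrow\rho_{q(\cdot)}(\mathrm{u})=1$, all of which are elementary once $q^{+}<\infty$ (so that the defining infimum is attained). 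A corollary is that norm convergence and modular convergence coincide on $L^{q(\cdot)}(\Omega)$, which will be used repeatedly below.

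For completeness I would run the classical Riesz--Fischer scheme. If $(\mathrm{u}_n)$ is Cauchy for $\|\cdot\|_{q(\cdot)}$, the sandwich estimate shows it is Cauchy in measure on $\Omega$ (which has finite measure), so a subsequence $(\mathrm{u}_{n_k})$ converges a.e.\ to a measurable function $\mathrm{u}$. Fixing $m$ large and a small $\mu>0$ so that $\|\mathrm{u}_{n_k}-\mathrm{u}_m\|_{q(\cdot)}\le\mu$ for all large $k$, Fatou's lemma applied to $\rho_{q(\cdot)}$ gives $\rho_{q(\cdot)}\big((\mathrm{u}-\mathrm{u}_m)/\mu\big)\le\liminf_k\rho_{q(\cdot)}\big((\mathrm{u}_{n_k}-\mathrm{u}_m)/\mu\big)\le1$; hence $\mathrm{u}-\mathrm{u}_m\in L^{q(\cdot)}(\Omega)$, so $\mathrm{u}\in L^{q(\cdot)}(\Omega)$, and $\|\mathrm{u}-\mathrm{u}_m\|_{q(\cdot)}\to0$. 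Since the full sequence is Cauchy, $\mathrm{u}_n\to\mathrm{u}$ in norm. Separability I would obtain by exhibiting an explicit countable dense set, namely the finite $\mathbb{Q}$-linear combinations of indicator functions $\mathbf{1}_{Q\cap\Omega}$ with $Q$ a box with rational vertices. Density is reached in two steps: first, the truncations $\mathrm{u}\,\mathbf{1}_{\{|\mathrm{u}|\le n\}}$ converge to $\mathrm{u}$ in modular by dominated convergence---legitimate since $|\mathrm{u}|^{q(z)}\in L^{1}(\Omega)$ and $q^{+}<\infty$---hence in norm; second, any bounded measurable function on $\Omega$ is a uniform limit of simple functions, and on the bounded set $\Omega$ uniform convergence implies modular and hence norm convergence, while a further approximation of the level sets by elements of the countable algebra of rational boxes (plus density of $\mathbb{Q}$ in $\mathbb{R}$) reduces to the stated countable family.

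The substantive step, and the one I expect to be the main obstacle, is uniform convexity for $1<q^{-}\le q^{+}<+\infty$. I would pass through a modular version of uniform convexity and then transfer it to the norm via the relations above. The pointwise input is a Clarkson-type dichotomy that is \emph{uniform in the exponent over the compact interval} $[q^{-},q^{+}]\subset(1,\infty)$: for every $\varepsilon\in(0,1)$ there exists $\eta=\eta(\varepsilon,q^{-},q^{+})\in(0,1)$ such that for all $a,b\in\mathbb{R}$ and all $t\in[q^{-},q^{+}]$,
\[
\text{either}\quad\Big|\tfrac{a-b}{2}\Big|^{t}\le\varepsilon\,\tfrac{|a|^{t}+|b|^{t}}{2}\quad\text{or}\quad\Big|\tfrac{a+b}{2}\Big|^{t}\le(1-\eta)\,\tfrac{|a|^{t}+|b|^{t}}{2};
\]
the conditions $q^{-}>1$ and $q^{+}<\infty$ are precisely what is needed to obtain a single $\eta$ valid for all $t$ (the statement degenerates as $t\downarrow1$, reflecting the failure of uniform convexity for $L^{1}$). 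Applying this dichotomy with a suitably small parameter and integrating over $\Omega$---splitting it into the set where the first alternative holds, on which one uses convexity of $s\mapsto s^{q(z)}$, and its complement---yields: if $\rho_{q(\cdot)}(\mathrm{u}),\rho_{q(\cdot)}(\mathrm{v})\le1$ and $\rho_{q(\cdot)}\big(\tfrac{\mathrm{u}-\mathrm{v}}{2}\big)\ge\varepsilon$, then $\rho_{q(\cdot)}\big(\tfrac{\mathrm{u}+\mathrm{v}}{2}\big)\le1-\delta$ with $\delta=\delta(\varepsilon)>0$. Finally, if $\|\mathrm{u}\|_{q(\cdot)}=\|\mathrm{v}\|_{q(\cdot)}=1$ and $\|\mathrm{u}-\mathrm{v}\|_{q(\cdot)}\ge\varepsilon$, the norm--modular relations give $\rho_{q(\cdot)}(\mathrm{u})=\rho_{q(\cdot)}(\mathrm{v})=1$ and $\rho_{q(\cdot)}\big(\tfrac{\mathrm{u}-\mathrm{v}}{2}\big)\ge(\varepsilon/2)^{q^{+}}$, so the modular inequality forces $\rho_{q(\cdot)}\big(\tfrac{\mathrm{u}+\mathrm{v}}{2}\big)\le1-\delta$ and hence $\big\|\tfrac{\mathrm{u}+\mathrm{v}}{2}\big\|_{q(\cdot)}\le1-\delta'$ for some $\delta'=\delta'(\varepsilon)>0$---which is exactly uniform convexity. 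Reflexivity is then immediate from the Milman--Pettis theorem, every uniformly convex Banach space being reflexive.
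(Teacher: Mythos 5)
This proposition is quoted in the paper from Fan--Zhao \cite{fan2001spaces}; the paper itself supplies no proof, so there is no internal argument to compare against. Your proposal is, as far as I can check, a correct and self-contained proof, and it follows the standard route for variable-exponent (Musielak--Orlicz) spaces rather than anything specific to this paper: the norm--modular unit-ball relations and the sandwich estimates (valid because $q^{+}<\infty$), a Riesz--Fischer completeness argument via convergence in measure (which uses $|\Omega|<\infty$, available here since $\Omega$ is bounded), density of rational simple functions for separability, a Clarkson-type dichotomy for $s\mapsto|s|^{t}$ that is uniform over the compact exponent range $[q^{-},q^{+}]\subset(1,\infty)$ to get uniform convexity of the modular and then of the Luxemburg norm, and finally Milman--Pettis for reflexivity. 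This is essentially the argument in the cited literature (Fan--Zhao, and in the dichotomy form, Diening--Harjulehto--H\"ast\"o--R\r{u}\v{z}i\v{c}ka), so your contribution is to make explicit what the paper outsources. Two steps that you correctly flag but that deserve the emphasis you give them: the identification $\|\mathrm{u}\|_{q(\cdot)}=1\Rightarrow\rho_{q(\cdot)}(\mathrm{u})=1$ and the coincidence of norm and modular convergence both genuinely require $q^{+}<\infty$, and the single $\eta(\varepsilon)$ in the pointwise dichotomy genuinely requires $q^{-}>1$; your transfer from the modular estimate to the norm estimate via $\rho_{q(\cdot)}\big(\tfrac{\mathrm{u}-\mathrm{v}}{2}\big)\ge(\varepsilon/2)^{q^{+}}$ and $(1-\delta')^{q^{+}}\ge 1-\delta$ is the right bookkeeping. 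I see no gap.
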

\begin{prop} \cite{fan2001spaces} (Hölder inequality)
$$ \bigg|\, \int_{\Omega} \mathrm{u}\, v \,\, dx \, \bigg| \leq \bigg( \, \frac{1}{q^{-}} + \frac{1}{( q^{'} )^{-}} \bigg) \, ||\, \mathrm{u}\,||_{q(\cdot)} ||\, v\,||_{q^{'} (\cdot)}, \hspace*{0.5cm} \forall \mathrm{u}, \,v \in L^{q(\cdot)} ( \Omega ) \times L^{q^{'}(\cdot)} ( \mathcal{Q} ),$$
with $ \frac{1}{q( z )} + \frac{1}{q^{'} ( z )} = 1.$
\end{prop}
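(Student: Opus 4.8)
The plan is to derive the inequality from the elementary (pointwise) Young inequality together with the fact that, since $q^{+} < +\infty$, the modular of a function divided by its Luxemburg norm never exceeds $1$. First I would dispose of the trivial cases: if $||\,\mathrm{u}\,||_{q(\cdot)} = 0$ or $||\,v\,||_{q^{'}(\cdot)} = 0$, then $\mathrm{u} = 0$ or $v = 0$ almost everywhere on $\Omega$, so both sides of the claimed estimate vanish and there is nothing to prove. Hence from now on both norms are assumed strictly positive and finite.

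Next I would normalize. Set $\tilde{\mathrm{u}} = \mathrm{u} / ||\,\mathrm{u}\,||_{q(\cdot)}$ and $\tilde{v} = v / ||\,v\,||_{q^{'}(\cdot)}$. Because $q^{+} < +\infty$, the modular $\rho_{q(\cdot)}$ is finite and continuous on $L^{q(\cdot)}(\Omega)$, so from the definition of $||\,\cdot\,||_{q(\cdot)}$ as an infimum one obtains $\rho_{q(\cdot)}(\tilde{\mathrm{u}}) \leq 1$, and in the same way $\rho_{q^{'}(\cdot)}(\tilde{v}) \leq 1$. The core step is then the pointwise Young inequality: for almost every $z \in \Omega$, since $\tfrac{1}{q(z)} + \tfrac{1}{q^{'}(z)} = 1$ with both exponents in $(1,\infty)$,
$$ |\tilde{\mathrm{u}}(z)\,\tilde{v}(z)| \leq \frac{|\tilde{\mathrm{u}}(z)|^{q(z)}}{q(z)} + \frac{|\tilde{v}(z)|^{q^{'}(z)}}{q^{'}(z)} \leq \frac{|\tilde{\mathrm{u}}(z)|^{q(z)}}{q^{-}} + \frac{|\tilde{v}(z)|^{q^{'}(z)}}{(q^{'})^{-}}, $$
where the last bound uses $q(z) \geq q^{-}$ and $q^{'}(z) \geq (q^{'})^{-}$. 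Integrating over $\Omega$ and invoking the two modular bounds just established gives
$$ \int_{\Omega} |\tilde{\mathrm{u}}\,\tilde{v}| \,\, dz \leq \frac{1}{q^{-}}\,\rho_{q(\cdot)}(\tilde{\mathrm{u}}) + \frac{1}{(q^{'})^{-}}\,\rho_{q^{'}(\cdot)}(\tilde{v}) \leq \frac{1}{q^{-}} + \frac{1}{(q^{'})^{-}}. $$

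Finally I would undo the normalization: multiplying the last inequality through by $||\,\mathrm{u}\,||_{q(\cdot)}\,||\,v\,||_{q^{'}(\cdot)}$ and using $\big|\int_{\Omega}\mathrm{u}\,v\,dx\big| \leq \int_{\Omega}|\mathrm{u}\,v|\,dx$ yields precisely the asserted estimate. I do not anticipate any real obstacle; the only point that deserves a line of justification is the inequality $\rho_{q(\cdot)}\big(\mathrm{u}/||\,\mathrm{u}\,||_{q(\cdot)}\big) \leq 1$, which rests on the hypothesis $q^{+} < +\infty$ (so that $\rho_{q(\cdot)}$ is continuous and the infimum defining the norm is attained with value at most $1$). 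If one prefers to avoid even that, one can instead work with $\rho_{q(\cdot)}\big(\mathrm{u}/(||\,\mathrm{u}\,||_{q(\cdot)} + \varepsilon)\big) \leq 1$ for $\varepsilon > 0$, run the same argument, and let $\varepsilon \downarrow 0$ at the end.
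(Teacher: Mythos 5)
Your proposal is correct, and it is essentially the standard argument: the paper itself gives no proof of this proposition (it is quoted from Fan--Zhao \cite{fan2001spaces}), and the proof there is exactly your normalization-plus-pointwise-Young scheme, bounding $\rho_{q(\cdot)}(\mathrm{u}/\|\mathrm{u}\|_{q(\cdot)})\le 1$ and $\rho_{q'(\cdot)}(v/\|v\|_{q'(\cdot)})\le 1$ and then estimating $1/q(z)\le 1/q^{-}$, $1/q'(z)\le 1/(q')^{-}$. Your closing remark about justifying $\rho_{q(\cdot)}(\mathrm{u}/\|\mathrm{u}\|_{q(\cdot)})\le 1$ (via $q^{+}<\infty$ or the $\varepsilon$-perturbation) is exactly the right point to flag, and it is also consistent with Proposition 2.10(ii) quoted later in the paper.
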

\begin{definition} \cite{fan2001spaces}
We define the variable exponents Sobolev space by
$$ W^{1, q( z )} ( \Omega ) = \big\{ \, \mathrm{u} \in L^{q( z )} ( \Omega ) \,\, \mbox{and} \,\, |\, D \mathrm{u}\,| \in L^{q( z )} ( \Omega ) \,\big\},$$
with the norm $$ ||\, \mathrm{u}\,||_{W^{1, q( z )} ( \Omega )} = ||\, \mathrm{u}\,||_{L^{q( z )} ( \Omega )} + ||\, D \mathrm{u} \||_{L^{q( z )} ( \Omega )}, \,\, \forall \mathrm{u} \in W^{1, q( z )} ( \Omega ).$$
\end{definition}
\begin{rem} \cite{fan2001spaces}
$$ W^{1, q( z )}_{0} ( \Omega ) = \overline{C^{\infty}_{0} ( \Omega )}^{ W^{1, q( z )} ( \Omega )}.$$
\end{rem}

\subsection{Sobolev spaces with variable exponents in complete manifolds}
\begin{definition} \cite{hebey2000nonlinear}
Let $ ( M, g ) $ be a smooth Riemannain N-manifolds and let $ D $ be the Levi-Civita connection. If $\mathrm{u} $ is a smooth function on $M$, then $ D^{k} \mathrm{u}  $ denotes the $k-$th covariant derivative of $\mathrm{u} $, and $ | \, D^{k} \mathrm{u}  \, | $ the norm of $ D^{k} \mathrm{u}  $ defined in local coordinates by
$$ | \, D^{k} \mathrm{u}  \, |^{2} = g^{i_{1} j_{1}} \cdots g^{i_{k} j_{k}} \, ( D^{k} \mathrm{u}  )_{i_{1} \cdots i_{k}} \, ( D^{k} \mathrm{u}  )_{j_{1} \cdots j_{k}} $$
where Einstein's convention is used.
\end{definition}
\begin{definition} \cite{hebey2000nonlinear}
To define variable Sobolev spaces, given a variable exponent $q$ in $ \mathcal{P} ( M ) $ ( the set of all measurable functions $p(\cdot) : M \rightarrow [ 1, \infty ]$ ) and a natural number $k$, introduce 
$$ C^{q(\cdot)}_{k} ( M ) = \{ \, \mathrm{u}  \in C^{\infty} ( M ) \,\, \mbox{such that } \,\, \forall j \,\, 0 \leq j \leq k \,\, | \, D^{k} \mathrm{u}  \, | \in L^{q( \cdot ) } ( M )\, \}. $$
On $ C^{q( \cdot )}_{k} ( M ) $ define the norm 
$$ || \, \mathrm{u}  \, ||_{L^{q( \cdot )}_{k}} = \sum_{j = 0}^{k} || \, D^{j} \mathrm{u}  \, ||_{L^{q( \cdot )}}. $$
\end{definition}
\begin{definition} \cite{gaczkowski2016sobolev}
The Sobolev spaces $ L_{k}^{q( \cdot )} ( M ) $ is the completion of $ C^{q(\cdot)}_{k} ( M ) $ with respect to the norm $ || \, \mathrm{u}  \, ||_{L^{q( \cdot )}_{k}}$. If $\Omega$ is a subset of $M$, then $L^{q( \cdot )}_{k, 0} ( \Omega )$ is the completion of $C^{q( \cdot )}_{k} ( M ) \cap C_{0} ( M )$ with respect to $ || \, \cdot \,||_{L^{q( \cdot )}_{k}},$ where $C_{0} ( \Omega )$ denotes the vector space of continuous functions whose support is a compact subset of $\Omega.$
\end{definition}
\begin{definition}\cite{hebey2000nonlinear}
Given $ ( M, g ) $ a smooth Riemannian manifold, and $ \gamma : \, [\, a, \, b \, ] \longrightarrow M $ a curve of class $ C^{1} $. The length of $ \gamma $ is 
$$ \ell( \gamma ) = \int_{a}^{b} \bigg( \, g \, \bigg( \, \frac{d \gamma }{d t }, \, \frac{d \gamma}{d t}\, \bigg) \, \bigg)^{\frac{1}{2}} \,\,dt, $$
and for a pair of points $ z, \, y \in M$, we define the distance $ d_{g} ( z, y ) $ between $z$ and $y$ by 
$$ d_{g} ( z, y ) = \inf \, \{ \, \ell( \gamma ) : \, \gamma: \, [ \, a, \, b \,] \rightarrow M \,\, \mbox{such that} \,\, \gamma ( a ) = z \,\, \mbox{and} \,\, \gamma ( b ) = y \, \}. $$
\end{definition}
\begin{definition} \cite{hebey2000nonlinear}
A function $ s: \, M \longrightarrow \mathbb{R} $ is log-Hölder continuous if there exists a constant $c$ such that for every pair of points $ \{ z, \, y \} $ in $ M$ we have
$$ | \, s( z ) - s( y ) \, | \leq c\,.\, \bigg( \log ( e + \frac{1}{d_{g} ( z, y )} \, ) \bigg)^{-1}. $$
We note by $ \mathcal{P}^{log} ( M ) $ the set of log-Hölder continuous variable exponents. 
The link with $ \mathcal{P}^{log} ( M ) $ and $ \mathcal{P}^{log} ( \mathbb{R}^{N} ) $ is given by the following proposition:
\end{definition}
\begin{proposition} \cite{aubin1982nonlinear, gaczkowski2016sobolev}
Let $ q \in \mathcal{P}^{log} ( M ) $, and let $ ( \Omega, \phi ) $ be a chart such that 
$$ \frac{1}{2} \delta_{i j } \leq g_{i j} \leq 2 \, \delta_{i j } $$
as bilinear forms, where $ \delta_{i j} $ is the delta Kronecker symbol. Then $ qo\phi^{-1} \in \mathcal{P}^{log} ( \phi ( \Omega ) ).$
\end{proposition}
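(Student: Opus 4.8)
The plan is to use the uniform two-sided bound $\frac12\delta_{ij}\le g_{ij}\le 2\delta_{ij}$ to compare, on $\Omega$, the Riemannian distance $d_g$ with the Euclidean distance pulled back through $\phi$, and then to absorb the resulting numerical constants into the log-Hölder constant. First I would fix a curve $\gamma\colon[a,b]\to\Omega$ of class $C^{1}$ and note that, in the coordinates of the chart, $g\big(\frac{d\gamma}{dt},\frac{d\gamma}{dt}\big)=g_{ij}\,\dot\gamma^{i}\dot\gamma^{j}$, so the hypothesis on $g_{ij}$ as bilinear forms gives pointwise $\frac12\big|\frac{d(\phi\circ\gamma)}{dt}\big|^{2}\le g\big(\frac{d\gamma}{dt},\frac{d\gamma}{dt}\big)\le 2\big|\frac{d(\phi\circ\gamma)}{dt}\big|^{2}$, with $|\cdot|$ the Euclidean norm on $\phi(\Omega)\subset\mathbb{R}^{N}$. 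Integrating the square roots over $[a,b]$ yields $\frac{1}{\sqrt2}\,\ell_{e}(\phi\circ\gamma)\le\ell(\gamma)\le\sqrt2\,\ell_{e}(\phi\circ\gamma)$, with $\ell_{e}$ the Euclidean length. Taking infima over curves joining two points $z,y\in\Omega$ — using in particular the $\phi^{-1}$-image of the straight segment $[\phi(z),\phi(y)]$ as a competitor — I obtain $d_g(z,y)\le\sqrt2\,|\phi(z)-\phi(y)|$; the reverse bound $|\phi(z)-\phi(y)|\le\sqrt2\,d_g(z,y)$ follows the same way but will not be needed.

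Next I would transfer the estimate. Given $a,b\in\phi(\Omega)$, put $z=\phi^{-1}(a)$ and $y=\phi^{-1}(b)$, so $q\circ\phi^{-1}(a)-q\circ\phi^{-1}(b)=q(z)-q(y)$; since $q\in\mathcal{P}^{log}(M)$ and $t\mapsto\log(e+1/t)$ is decreasing, the distance comparison gives
$$\big|q\circ\phi^{-1}(a)-q\circ\phi^{-1}(b)\big|\le c\Big(\log\big(e+\tfrac{1}{d_g(z,y)}\big)\Big)^{-1}\le c\Big(\log\big(e+\tfrac{1}{\sqrt2\,|a-b|}\big)\Big)^{-1}.$$
It then remains to compare $\log\big(e+\frac{1}{\sqrt2\,t}\big)$ with $\log\big(e+\frac1t\big)$: their ratio is continuous and positive on $(0,t_0]$ and tends to $1$ as $t\to0^{+}$, hence is bounded, so there is $C_0=C_0(t_0)\ge 1$ with $\log\big(e+\frac{1}{\sqrt2\,t}\big)\ge C_0^{-1}\log\big(e+\frac1t\big)$ for $0<t\le t_0$, where one may take $t_0=\operatorname{diam}_e\phi(\Omega)$. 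This yields the log-Hölder inequality for $q\circ\phi^{-1}$ with constant $cC_0$ when $|a-b|\le t_0$; for $|a-b|\ge t_0$ the quantity $\big(\log(e+1/|a-b|)\big)^{-1}$ is bounded below by a positive constant whereas the left-hand side never exceeds $2\,\|q\|_{L^{\infty}(\overline{M})}<\infty$ (by continuity of $q$ on the compact set $\overline{M}$), so after enlarging the constant the inequality holds for all $a,b\in\phi(\Omega)$. That is exactly $q\circ\phi^{-1}\in\mathcal{P}^{log}(\phi(\Omega))$.

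I expect the only genuinely delicate point to be the distance comparison: $d_g$ is an infimum over curves that may leave $\Omega$, and the Euclidean segment realizing $|\phi(z)-\phi(y)|$ need not stay inside $\phi(\Omega)$ when that set is nonconvex. The resolution is that log-Hölder continuity is a short-range condition, so it suffices to prove the inequality for pairs with $|\phi(z)-\phi(y)|$ small, for which the straight segment does lie in the chart domain (alternatively, one shrinks the chart to a convex — e.g.\ ball-shaped — image, which is the customary normalization). Everything else is soft: an elementary logarithm estimate and compactness of $\overline{M}$.
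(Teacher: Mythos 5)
The paper itself gives no proof of this proposition: it is quoted with a citation to Aubin and to Gaczkowski--G\'orka--Pon, so there is nothing internal to compare your argument against. Your proof is the standard one from that literature and is essentially correct: the two-sided bound on $g_{ij}$ gives the length comparison $\tfrac{1}{\sqrt2}\,\ell_e(\phi\circ\gamma)\le\ell(\gamma)\le\sqrt2\,\ell_e(\phi\circ\gamma)$, hence $d_g(z,y)\le\sqrt2\,|\phi(z)-\phi(y)|$ when the segment is admissible, and the elementary estimate comparing $\log\bigl(e+\tfrac{1}{\sqrt2 t}\bigr)$ with $\log\bigl(e+\tfrac1t\bigr)$, together with the trivial large-distance case (note the oscillation of $q$ is already bounded by the log-H\"older constant, since $\log(e+1/d)\ge 1$, so compactness of $\overline M$ is not even needed there), transfers the modulus of continuity.

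The one genuine soft spot is exactly the point you flag, but your first proposed resolution does not work as stated: it is not true for a general open set $\phi(\Omega)$ that pairs at small Euclidean distance are joined by a segment inside $\phi(\Omega)$ (think of a thin U-shaped image whose two ends nearly touch; there is no smallness threshold, and for such a chart the preimages of nearby image points can be far apart in $(M,d_g)$, so the inequality $d_g(z,y)\le\sqrt2\,|\phi(z)-\phi(y)|$ — and in fact the proposition itself, read for a completely arbitrary chart — can fail). What saves the statement is your parenthetical alternative, which is the hypothesis actually in force in the cited sources: the charts are normalized so that $\phi(\Omega)$ is a ball (or at least convex), e.g. the bounded-geometry/harmonic-radius charts used by Hebey and by Gaczkowski--G\'orka--Pon, and under that normalization your argument goes through verbatim. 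So the proof is acceptable provided you state the convexity (ball-image) normalization as an assumption rather than deriving it from short-range smallness; also, your side remark that the reverse bound $|\phi(z)-\phi(y)|\le\sqrt2\,d_g(z,y)$ ``follows the same way'' is likewise only valid for curves staying in the chart, but since you never use it this is immaterial.
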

\begin{definition} \cite{hebey2000nonlinear}
We say that the N-manifold $ ( M, g ) $ has property $ B_{vol} ( \lambda, v ),$ where $\lambda$ is a constant, if its geometry is bounded in the following sense:\\
$ \hspace*{1cm} \bullet \,\, \mbox{The Ricci tensor of g noted by Rc ( g ) verify,} \,\,Rc ( g ) \geq \lambda ( N - 1 ) \, g $ for some $ \lambda,$ where $N$ is the dimension of $M.$\\
$ \hspace*{1cm} \bullet  $ There exists some $ v > 0 $ such that $ | \, B_{1} ( z ) \, |_{g} \geq v \,\, \forall z \in M,$ where $B_{1} ( z ) $ are the balls of radius 1 centered at some point $z$ in terms of the volume of smaller concentric balls.
\end{definition}
\begin{proposition} \cite{aubin1982nonlinear,hebey2000nonlinear} \label{prop2}
Let $ ( M, g ) $ be a complete Riemannian N-manifold. Then, if the embedding $ L^{1}_{1} ( M ) \hookrightarrow L^{\frac{n}{N - 1}} ( M )$ holds, then whenever the real numbers $q$ and $p$ satisfy $$ 1 \leq q < N, $$ and $$ q \leq p \leq q* = \frac{N q}{N - q}, $$ the embedding $ L^{q}_{1} ( M ) \hookrightarrow L^{p} ( M ) $ also holds.
\end{proposition}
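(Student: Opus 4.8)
The plan is to deduce the whole family of embeddings from the single endpoint case $L^{q}_{1}(M)\hookrightarrow L^{q^{\ast}}(M)$, which is extracted from the hypothesis by the classical power-substitution trick of Gagliardo--Nirenberg--Sobolev, and then to drop from $q^{\ast}$ to an arbitrary $p\in[q,q^{\ast}]$ by interpolation of Lebesgue norms. First I would unwind the hypothesis: the assumed embedding $L^{1}_{1}(M)\hookrightarrow L^{N/(N-1)}(M)$ means there is a constant $A>0$ with
$$\|u\|_{L^{N/(N-1)}(M)}\;\le\;A\Big(\|u\|_{L^{1}(M)}+\|Du\|_{L^{1}(M)}\Big)\qquad\text{for every }u\in L^{1}_{1}(M),$$
in particular for every $u\in C^{\infty}_{0}(M)$. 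Since $(M,g)$ is complete, $C^{\infty}_{0}(M)$ is dense in $L^{q}_{1}(M)$ (a standard fact, proved with exhausting cut-off functions, Hopf--Rinow guaranteeing that closed balls are compact), so it suffices to prove a norm estimate $\|f\|_{L^{p}(M)}\le C\|f\|_{L^{q}_{1}(M)}$ for $f\in C^{\infty}_{0}(M)$; the embedding on all of $L^{q}_{1}(M)$ follows by a routine completion argument (a subsequence converges a.e., identifying the $L^{p}$-limit with $f$). We may also assume $q>1$, the case $q=1$ being the hypothesis itself.

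\textbf{The endpoint $p=q^{\ast}$.} Fix $f\in C^{\infty}_{0}(M)$ with $f\not\equiv0$, set $t=\frac{(N-1)q}{N-q}$, and apply the displayed inequality to $u=|f|^{t}$. Since $t\ge1$, the map $s\mapsto|s|^{t}$ is $C^{1}$ (after negligible smoothing near $\{f=0\}$) and $|Du|=t|f|^{t-1}|Df|$ pointwise, so $u\in C^{\infty}_{0}(M)$ up to approximation. Hölder's inequality with exponents $q$ and $q'=\frac{q}{q-1}$ gives
$$\|u\|_{L^{1}_{1}}=\int_{M}|f|^{t}\,dv_{g}+t\int_{M}|f|^{t-1}|Df|\,dv_{g}\;\le\;\|f\|_{L^{t}}^{t}+t\,\big\||f|^{t-1}\big\|_{L^{q'}}\,\|Df\|_{L^{q}}.$$
The choice of $t$ is made precisely so that $\frac{tN}{N-1}=q^{\ast}$ and $(t-1)q'=q^{\ast}$, whence $\|u\|_{L^{N/(N-1)}}=\|f\|_{L^{q^{\ast}}}^{t}$ and $\big\||f|^{t-1}\big\|_{L^{q'}}=\|f\|_{L^{q^{\ast}}}^{t-1}$; one also checks $q\le t<q^{\ast}$. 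The hypothesis therefore yields
$$\|f\|_{L^{q^{\ast}}}^{t}\;\le\;A\,\|f\|_{L^{t}}^{t}+At\,\|f\|_{L^{q^{\ast}}}^{t-1}\,\|Df\|_{L^{q}}.$$
Now split into two cases according to which term on the right dominates. If the second one does, divide by $\|f\|_{L^{q^{\ast}}}^{t-1}$ (finite and nonzero, as $f\in C^{\infty}_{0}$) to get $\|f\|_{L^{q^{\ast}}}\le2At\,\|Df\|_{L^{q}}$. If the first one does, then $\|f\|_{L^{q^{\ast}}}\le(2A)^{1/t}\|f\|_{L^{t}}$; interpolating $\|f\|_{L^{t}}\le\|f\|_{L^{q}}^{1-\theta}\|f\|_{L^{q^{\ast}}}^{\theta}$ with $\frac1t=\frac{1-\theta}{q}+\frac{\theta}{q^{\ast}}$ (here $0\le\theta<1$ because $q\le t<q^{\ast}$) and absorbing the finite factor $\|f\|_{L^{q^{\ast}}}^{\theta}$ on the left gives $\|f\|_{L^{q^{\ast}}}\le C\|f\|_{L^{q}}$. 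In either case $\|f\|_{L^{q^{\ast}}}\le C\big(\|f\|_{L^{q}}+\|Df\|_{L^{q}}\big)=C\|f\|_{L^{q}_{1}}$.

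\textbf{The general exponent and conclusion.} For $q\le p\le q^{\ast}$ write $\frac1p=\frac{1-\theta}{q}+\frac{\theta}{q^{\ast}}$ with $\theta\in[0,1]$ and apply the elementary interpolation inequality $\|f\|_{L^{p}}\le\|f\|_{L^{q}}^{1-\theta}\|f\|_{L^{q^{\ast}}}^{\theta}$ (no assumption on $\mathrm{vol}(M)$ is needed here). Combining this with the endpoint estimate and with $\|f\|_{L^{q}}\le\|f\|_{L^{q}_{1}}$ yields $\|f\|_{L^{p}}\le C^{\theta}\|f\|_{L^{q}_{1}}$ for all $f\in C^{\infty}_{0}(M)$, and hence the continuous embedding $L^{q}_{1}(M)\hookrightarrow L^{p}(M)$ after the density passage described above.

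\textbf{Main obstacle.} The difficulty is not conceptual but organizational: one must (a) verify that the two ``extra'' norms produced respectively by Hölder's inequality and by the base inequality are \emph{both} exactly powers of $\|f\|_{L^{q^{\ast}}}$, which is what makes the case-split and absorption close and which pins down the exponent $t=\frac{(N-1)q}{N-q}$; and (b) keep all absorption steps confined to $f\in C^{\infty}_{0}(M)$, where every norm in sight is finite, invoking completeness of $(M,g)$ to return to $L^{q}_{1}(M)$. The regularity identity $|D(|f|^{t})|=t|f|^{t-1}|Df|$ for $t\ge1$ is genuinely routine, handled by smooth approximation of $s\mapsto|s|^{t}$ near the origin.
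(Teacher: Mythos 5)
Your argument is correct; note that the paper itself offers no proof of this proposition, citing it directly from Aubin and Hebey, and what you wrote is essentially the classical proof found in those references: apply the $L^1$-Sobolev inequality to $|f|^{t}$ with $t=\frac{(N-1)q}{N-q}$, use H\"older, absorb, then interpolate to reach $p\in[q,q^{*}]$ and conclude by density of $C^{\infty}_{0}(M)$ in $L^{q}_{1}(M)$ on a complete manifold. The only cosmetic difference is in handling the zeroth-order term: the standard treatment estimates $\int_{M}|f|^{t}\,dv_{g}=\int_{M}|f|^{t-1}|f|\,dv_{g}\le\|f\|_{L^{q^{*}}}^{t-1}\|f\|_{L^{q}}$ by a second H\"older application, which lets one divide by $\|f\|_{L^{q^{*}}}^{t-1}$ once and obtain $\|f\|_{L^{q^{*}}}\le A\bigl(t\|Df\|_{L^{q}}+\|f\|_{L^{q}}\bigr)$ directly, whereas your case split plus interpolation-and-absorption reaches the same endpoint estimate with slightly more bookkeeping.
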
\label{prop5}
\begin{proposition} \cite{aubin1982nonlinear,hebey2000nonlinear}\label{prop3}
Assume that the smooth complete compact Riemannian N-manifold $ ( M, g ) $ has property $ B_{vol} ( \lambda, v ) $ for some $ ( \lambda, v ).$ Then there exist positive constants $ \delta_{0} = \delta_{0} ( N, \, \lambda, \, v ) $ and $ A = A ( N, \, \lambda, \, v ) $, we have, if $ R \leq \delta_{0} $, if $ z \in M $ if $ 1 \leq q \leq N $, and if $ \mathrm{u}  \in L^{q}_{1,0} ( \, B_{R} ( z ) \, ) $ the estimate 
$$ || \, \mathrm{u}  \, ||_{L^{p}} \leq A \,p \, || \, D \mathrm{u}  \, ||_{L^{q}},$$ where $ \frac{1}{p} = \frac{1}{q} - \frac{1}{N}.$ 
\end{proposition}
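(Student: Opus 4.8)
The plan is to reduce the estimate to its endpoint case $q=1$ --- a \emph{uniform} local $L^{1}$-Sobolev (isoperimetric) inequality on small geodesic balls --- and then to recover the whole range $1\le q<N$ by the classical Gagliardo--Nirenberg iteration applied to powers of $\mathrm{u}$. Since $L^{q}_{1,0}(B_{R}(z))$ is, by definition, the completion of the smooth functions with compact support in $B_{R}(z)$ for the $L^{q}_{1}$-norm, it suffices to prove the inequality for $\mathrm{u}\in C^{\infty}_{0}(B_{R}(z))$ and then pass to the completion: if $\mathrm{u}_{j}\to\mathrm{u}$ in $L^{q}_{1}$ with $\mathrm{u}_{j}\in C^{\infty}_{0}(B_{R}(z))$, then $\|D\mathrm{u}_{j}\|_{L^{q}}\to\|D\mathrm{u}\|_{L^{q}}$ and, along an a.e.\ convergent subsequence, Fatou's lemma gives $\|\mathrm{u}\|_{L^{p}}\le\liminf_{j}\|\mathrm{u}_{j}\|_{L^{p}}\le Ap\,\liminf_{j}\|D\mathrm{u}_{j}\|_{L^{q}}=Ap\,\|D\mathrm{u}\|_{L^{q}}$, so the bound is stable under completion.

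\textbf{Step 1 (endpoint inequality).} I would first establish that there exist $\delta_{0}=\delta_{0}(N,\lambda,v)>0$ and $A=A(N,\lambda,v)>0$ such that, for every $z\in M$, every $R\le\delta_{0}$, and every $\mathrm{u}\in C^{\infty}_{0}(B_{R}(z))$,
\[
\|\mathrm{u}\|_{L^{\frac{N}{N-1}}(B_{R}(z))}\;\le\;A\,\|D\mathrm{u}\|_{L^{1}(B_{R}(z))}.
\]
This is the only place where property $B_{vol}(\lambda,v)$ is used in an essential way, and it cannot be dropped: on a collapsing family of flat product cylinders --- whose Ricci curvature vanishes while the volumes of unit balls tend to $0$ --- the constant in such an $L^{1}$-Sobolev inequality blows up, so the lower volume bound $|B_{1}(z)|_{g}\ge v$ is genuinely needed. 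The argument is standard comparison geometry: the lower Ricci bound $Rc(g)\ge\lambda(N-1)g$ yields, through Bishop--Gromov volume comparison, a relative volume estimate and hence a uniform doubling property on balls of radius $\le 1$; combining this with a Buser/Croke-type isoperimetric (equivalently, Neumann--Poincar\'e) estimate produces a uniform local isoperimetric inequality on geodesic balls of radius $\le\delta_{0}$, with isoperimetric constant depending only on $N,\lambda,v$; the displayed inequality is its functional form, obtained via the coarea formula. This is exactly the estimate of \cite{aubin1982nonlinear,hebey2000nonlinear}, which I would import, only verifying that $\delta_{0}$ and $A$ are chosen uniformly in the center $z$ --- which is precisely what the uniformity built into $B_{vol}(\lambda,v)$ provides.

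\textbf{Step 2 (iteration to $1\le q<N$).} For $q=1$ there is nothing more to do, since then $p=\frac{N}{N-1}\ge1$ and thus $A\le Ap$. For $1<q<N$, set
\[
s=\frac{q(N-1)}{N-q}>1,\qquad p=\frac{Nq}{N-q},\qquad q'=\frac{q}{q-1}.
\]
Given $\mathrm{u}\in C^{\infty}_{0}(B_{R}(z))$, apply Step 1 --- extended by density to $L^{1}_{1,0}(B_{R}(z))$ --- to the Lipschitz, compactly supported function $|\mathrm{u}|^{s}$, use the pointwise chain-rule bound $\big|D|\mathrm{u}|^{s}\big|\le s\,|\mathrm{u}|^{s-1}|D\mathrm{u}|$ (Kato's inequality), and then H\"older's inequality with exponents $q$ and $q'$:
\[
\big\||\mathrm{u}|^{s}\big\|_{L^{\frac{N}{N-1}}}\;\le\;A\,s\int_{B_{R}(z)}|\mathrm{u}|^{s-1}\,|D\mathrm{u}|\,dv_{g}\;\le\;A\,s\,\|D\mathrm{u}\|_{L^{q}}\,\big\||\mathrm{u}|^{s-1}\big\|_{L^{q'}}.
\]
A short computation gives $\frac{sN}{N-1}=(s-1)q'=p$, so the left-hand side equals $\|\mathrm{u}\|_{L^{p}}^{s}$ and the last factor equals $\|\mathrm{u}\|_{L^{p}}^{s-1}$; both are finite since $\mathrm{u}$ is smooth with compact support, and if $\|\mathrm{u}\|_{L^{p}}=0$ there is nothing to prove, so dividing through by $\|\mathrm{u}\|_{L^{p}}^{s-1}$ yields $\|\mathrm{u}\|_{L^{p}}\le A\,s\,\|D\mathrm{u}\|_{L^{q}}$. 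Finally $s=\frac{N-1}{N}\,p\le p$, hence $\|\mathrm{u}\|_{L^{p}}\le A\,p\,\|D\mathrm{u}\|_{L^{q}}$ with $\frac{1}{p}=\frac{1}{q}-\frac{1}{N}$; the density argument from the first paragraph then extends this to all $\mathrm{u}\in L^{q}_{1,0}(B_{R}(z))$. (This iteration is the local counterpart of the reduction in Proposition \ref{prop2}; the endpoint $q=N$, formally $p=+\infty$, is read by the usual convention, or simply excluded.)

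\textbf{Main obstacle.} The only genuinely nontrivial ingredient is Step 1: extracting from the two clauses of $B_{vol}(\lambda,v)$ a radius $\delta_{0}$ and a constant $A$ that are uniform over all centers $z\in M$. That rests on comparison geometry --- Bishop--Gromov volume comparison together with a Buser/Croke-type local isoperimetric estimate --- and is the one step that is really about the manifold rather than about functional inequalities. Everything else (Kato's inequality, the H\"older step, the algebra fixing $s$ and $p$, and the completion argument) is routine, so the proof comes down to invoking the uniform local isoperimetric inequality of \cite{aubin1982nonlinear,hebey2000nonlinear} and running the elementary iteration of Step 2.
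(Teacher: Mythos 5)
The paper does not prove this proposition at all: it is imported verbatim from the cited sources \cite{aubin1982nonlinear,hebey2000nonlinear}, so there is no internal proof to compare against. Your two-step argument is exactly the classical route those references take: a uniform local $L^{1}\to L^{N/(N-1)}$ inequality on small balls obtained from the Ricci lower bound and the volume bound in $B_{vol}(\lambda,v)$ via comparison geometry, followed by the Gagliardo--Nirenberg iteration on $|\mathrm{u}|^{s}$ with $s=\frac{q(N-1)}{N-q}$, whose algebra ($\frac{sN}{N-1}=(s-1)q'=p$, $s\le p$) you verify correctly and which is what produces the constant linear in $p$; the density/Fatou passage to $L^{q}_{1,0}(B_{R}(z))$ is also fine. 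The one caveat is that your Step 1 --- the uniform local isoperimetric/endpoint inequality with $\delta_{0}$ and $A$ independent of the center --- is essentially the whole geometric content of the proposition and you invoke it from the same references rather than prove it, but since the paper itself treats the entire proposition as a citation, your proposal is at least as complete as the paper's treatment, and the reduction you do carry out is the standard one. (Minor point: the borderline $q=N$, $p=+\infty$ allowed by the paper's phrasing is indeed best read as excluded, as you note.)
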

\begin{proposition} \cite{aubin1982nonlinear,hebey2000nonlinear, gaczkowski2016sobolev} \label{prop6}
Assume that for some $ ( \lambda, v ) $ the smooth complete compact Riemannian N-manifold $( M, g ) $ has property $ B_{vol} ( \lambda, v ) $. Let $ p \in \mathcal{P} ( M ) $ be uniformly continuous with $ q^{+} < N.$ Then $ L^{q( \cdot )}_{1} ( M ) \hookrightarrow L^{p( \cdot )} ( M ) \,\, \forall q \in \mathcal{P} ( M ) $ such that $ q \ll p \ll q* = \frac{N q}{N - q}. $ In fact, for $ || \, \mathrm{u}  \, ||_{L^{q( \cdot )}_{1}} $ sufficiently small we have the estimate $$ \rho_{p( \cdot )} ( \mathrm{u}  ) \leq G \, ( \, \rho_{q( \cdot )} ( \mathrm{u}  ) + \rho_{q( \cdot )} ( | \, D \mathrm{u}  \, | ) \, ), $$ where the positive constant $G$ depend on $ N, \, \lambda, \, v, \, q $ and $ p $.
\end{proposition}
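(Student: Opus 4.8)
The plan is to prove Theorem~\ref{theo1} by the Nehari manifold method combined with an analysis of the fibering maps $t\mapsto J_\lambda(t\mathrm{u})$, adapting the scheme of \cite{papageorgiou2020positive,liu2021existence} to the variable-exponent Riemannian setting. First I would introduce the energy functional associated with $(\mathcal{P})$,
\begin{align*}
J_\lambda(\mathrm{u}) = \int_M \frac{1}{p(z)}|D\mathrm{u}|^{p(z)}\,dv_g + \int_M \frac{\mu(z)}{q(z)}|D\mathrm{u}|^{q(z)}\,dv_g - \int_M \frac{g(z)}{1-\gamma(z)}|\mathrm{u}|^{1-\gamma(z)}\,dv_g - \lambda\int_M \frac{1}{r(z)}|\mathrm{u}|^{r(z)}\,dv_g,
\end{align*}
defined on the cone of nonnegative functions in $W_0^{1,q(z)}(M)$. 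Using the continuous and compact embeddings $L^{q(\cdot)}_1(M)\hookrightarrow L^{p(\cdot)}(M)\hookrightarrow L^{r(\cdot)}(M)$ (which follow from Proposition~\ref{prop6} together with assumptions \eqref{1}--\eqref{2} and property $B_{vol}(\lambda,v)$), the Hölder inequality for variable exponents, and the bound $1-\gamma(z)<1<q^-$, I would verify that $J_\lambda$ is well-defined, continuous, and Gateaux-differentiable in the directions of $\mathcal{D}(M)$, so that critical points on the Nehari set yield weak solutions in the sense of the Definition above.

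\textbf{Key steps.} Second, I would define the Nehari manifold $\mathcal{N}_\lambda=\{\mathrm{u}\in W_0^{1,q(z)}(M)\setminus\{0\}:\langle J_\lambda'(\mathrm{u}),\mathrm{u}\rangle=0\}$ and split it as $\mathcal{N}_\lambda=\mathcal{N}_\lambda^+\cup\mathcal{N}_\lambda^0\cup\mathcal{N}_\lambda^-$ according to the sign of $\psi_\lambda''(1)$, where $\psi_\lambda(t)=J_\lambda(t\mathrm{u})$ is the fibering map. Because $p,q,r,\gamma$ are non-constant, the fibering map is not a polynomial in $t$; instead, using \eqref{1} I would estimate $\psi_\lambda(t)$ between expressions controlled by the modulars at exponents $q^\pm,p^\pm,r^\pm$ and the singular term, and show: for $\lambda$ small there is $\tilde\lambda_0^*>0$ such that $\mathcal{N}_\lambda^0=\{0\}$ (so $\mathcal{N}_\lambda^\pm$ are $C^1$ submanifolds), and every $\mathrm{u}\neq 0$ admits exactly the expected number of fibering roots — one in $\mathcal{N}_\lambda^+$ (a local minimum of $\psi_\lambda$, with $\psi_\lambda<0$ there, thanks to the singular term making $\psi_\lambda$ negative near the origin) and, provided $\mathrm{u}$ is suitably normalized, one in $\mathcal{N}_\lambda^-$ (a local maximum). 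Third, I would set $m_\lambda^\pm=\inf_{\mathcal{N}_\lambda^\pm}J_\lambda$, show $m_\lambda^+<0\le m_\lambda^-$, take minimizing sequences, and use the reflexivity of $W_0^{1,q(z)}(M)$ and the compact embeddings into $L^{p(\cdot)}$ and $L^{r(\cdot)}$ to extract weakly convergent subsequences; the compactness upgrades weak to strong convergence of the lower-order terms, and the $(S_+)$-type monotonicity of the $p(z)$–$q(z)$-Laplacian operator upgrades it on the gradient terms, yielding minimizers $\tilde{\mathrm{u}}^*\in\mathcal{N}_\lambda^+$ and $\tilde v^*\in\mathcal{N}_\lambda^-$. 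Fourth, via the Lagrange-type argument on the Nehari manifold (perturbing by $\varphi\in\mathcal{D}(M)$ and using that $\mathcal{N}_\lambda^0=\{0\}$ to eliminate the multiplier) I would show these minimizers are nonnegative weak solutions, and then $\tilde{\mathrm{u}}^*\neq\tilde v^*$ because $J_\lambda(\tilde{\mathrm{u}}^*)<0\le J_\lambda(\tilde v^*)$; positivity a.e.\ and $\mathrm{u}>0$ follow from testing with $\mathrm{u}^-$, the strong maximum principle adapted to this operator, and the singular term forcing $\mathrm{u}>0$ on the support of $g$.

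\textbf{Main obstacle.} The hard part will be the compactness and the fibering-map analysis in the \emph{variable-exponent} setting: because $\Delta_{p(z)}$ and $\Delta_{q(z)}$ are not homogeneous and the fibering map $\psi_\lambda(t)$ is a genuinely transcendental function of $t$, one cannot solve $\psi_\lambda'(t)=0$ explicitly, so the separation $\mathcal{N}_\lambda^0=\{0\}$ for small $\lambda$ must be obtained through careful modular estimates using \eqref{1}, \eqref{2} and the norm-modular relations, rather than by elementary calculus; likewise the singular term $\int_M \frac{g}{1-\gamma(z)}|\mathrm{u}|^{1-\gamma(z)}$ is only lower semicontinuous under weak convergence and must be handled with Fatou's lemma plus the compact embedding into $L^{1}(M)$, and the passage from weak solution of the truncated/regularized problem to $(\mathcal{P})$ requires a uniform control of $g\,\mathrm{u}^{-\gamma(z)}\varphi\in L^1(M)$ for all test functions, which I would obtain by combining the lower bound on $\mathrm{u}$ near $\operatorname{supp}g$ with the integrability of $g\in L^\infty(M)$ on the compact manifold $M$.
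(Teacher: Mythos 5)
Your proposal does not address the statement at all. The statement you were asked to prove is Proposition~\ref{prop6}, the embedding $L^{q(\cdot)}_{1}(M)\hookrightarrow L^{p(\cdot)}(M)$ for a complete compact manifold with property $B_{vol}(\lambda,v)$, together with the modular estimate $\rho_{p(\cdot)}(\mathrm{u})\leq G\,(\rho_{q(\cdot)}(\mathrm{u})+\rho_{q(\cdot)}(|D\mathrm{u}|))$ for $\|\mathrm{u}\|_{L^{q(\cdot)}_{1}}$ small. What you wrote instead is a plan for Theorem~\ref{theo1} (the two-solutions result via the Nehari manifold and fibering maps). In fact your plan \emph{uses} Proposition~\ref{prop6} as an ingredient (to get the embeddings into $L^{p(\cdot)}$ and $L^{r(\cdot)}$), so it is circular as an argument for the proposition itself. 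In the paper this proposition is not proved but quoted from the literature (Aubin, Hebey, Gaczkowski--G\'orka--Pon), so the relevant comparison is with those sources, and your text contains none of the ideas that proof requires.

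Concretely, a proof of Proposition~\ref{prop6} has nothing to do with energy functionals or Nehari sets: it is a localization-and-patching argument in bounded geometry. The hypotheses $Rc(g)\geq\lambda(N-1)g$ and $|B_{1}(z)|_{g}\geq v$ yield, via Proposition~\ref{prop3}, a local Sobolev inequality on balls $B_{R}(z)$ with $R\leq\delta_{0}(N,\lambda,v)$ whose constant depends only on $N,\lambda,v$; one then covers $M$ by a uniformly locally finite family of such balls (or charts in which $\tfrac12\delta_{ij}\leq g_{ij}\leq 2\delta_{ij}$, so that the Euclidean variable-exponent theory applies, as in the proposition on $q\circ\phi^{-1}\in\mathcal{P}^{log}(\phi(\Omega))$), uses the uniform continuity (log-H\"older control) of the exponents to reduce, on each small ball, to nearly constant exponents with $q\ll p\ll q^{*}$, applies the local embedding and norm--modular relations of Proposition~\ref{prop7}, and finally sums the local modular estimates, the uniform finiteness of the cover and the smallness of $\|\mathrm{u}\|_{L^{q(\cdot)}_{1}}$ giving the constant $G=G(N,\lambda,v,q,p)$. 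None of these steps (uniform local Sobolev constant, chart comparison of the metric, exponent-oscillation control, summation over a cover) appears in your proposal, so as an attempt at this statement it is a wrong approach rather than a partial one.
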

\begin{proposition} \cite{guo2015dirichlet} \label{prop7}
Let $ \mathrm{u}  \in L^{q( z )} ( M ), \,\{ \, \mathrm{u} _{k} \,\} \subset L^{q( z )} ( M ), \, k \in \mathbb{N},$ then we have 
\begin{enumerate}
\item[(i)]  $|| \mathrm{u}  ||_{q( z )} < 1 \,\,\mbox{( resp. = 1, $>$ 1 )} \iff \rho_{q( z )} ( \mathrm{u}  ) < 1 \,\,\mbox{( resp. = 1, $>$ 1 )},$
\item[(ii)]For $ \mathrm{u}  \in L^{q( z )} ( M ) \backslash \{ 0 \}, \,\, || \mathrm{u}  ||_{q( z )} = \lambda \Longleftrightarrow \rho_{q( z )} \big( \frac{\mathrm{u} }{\lambda} \big) = 1.$
\item[(iii)]  $ || \mathrm{u}  ||_{q( z )} < 1 \Rightarrow || \mathrm{u}  ||_{q( z )}^{q^{+}} \leq \rho_{q( z )} ( \mathrm{u}  ) \leq || \mathrm{u}  ||_{q( z )}^{q^{-}},$
\item[(iv)]  $ || \mathrm{u}  ||_{q( z )} > 1 \Rightarrow || \mathrm{u}  ||_{q( z )}^{q^{-}} \leq \rho_{q( z )} ( \mathrm{u}  ) \leq || \mathrm{u}  ||_{q( z )}^{q^{+}},$
\item[(v)] $ \lim_{k \rightarrow + \infty} || \mathrm{u} _{k} - \mathrm{u}  ||_{q( z )} = 0 \iff \lim_{k \rightarrow + \infty} \rho_{q( z )} ( \mathrm{u} _{k} - \mathrm{u}  ) = 0. $
\end{enumerate}
\end{proposition}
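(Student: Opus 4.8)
The plan is to reduce everything to a study of the single-variable function $\Phi_u(\mu) := \rho_{q(z)}(u/\mu) = \int_M |u(z)|^{q(z)}\,\mu^{-q(z)}\,dv_g(z)$, $\mu \in (0,\infty)$, exactly as in the Euclidean theory of variable-exponent Lebesgue spaces. First I would note that $\mu^{-q(z)} \le \max(1,\mu^{-q^+})$, since $1 \le q^- \le q(z) \le q^+ < \infty$, whence $\Phi_u(\mu) \le \max(1,\mu^{-q^+})\,\rho_{q(z)}(u) < \infty$ for every $\mu>0$ (recall $u \in L^{q(z)}(M)$ means $\rho_{q(z)}(u)<\infty$). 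Then I would establish, for $u \neq 0$, the following properties of $\Phi_u$ on $(0,\infty)$: it is non-increasing (clear, since $\mu_1 \le \mu_2$ forces $|u/\mu_1|^{q(z)} \ge |u/\mu_2|^{q(z)}$ pointwise) and in fact strictly decreasing (strict pointwise inequality on the positive-measure set $\{u\neq 0\}$); it is continuous, which follows from dominated convergence using on each bracket $[\mu_0/2,2\mu_0]$ the integrable majorant $\max(1,(2/\mu_0)^{q^+})|u|^{q(z)}$; and $\Phi_u(\mu)\to 0$ as $\mu\to+\infty$ (dominated convergence, majorant $|u|^{q(z)}$), while $\Phi_u(\mu)\to+\infty$ as $\mu\to 0^+$ (monotone convergence, using $q^->0$). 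Consequently $\Phi_u$ is a continuous strictly decreasing bijection of $(0,\infty)$ onto itself.

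Assertion (ii) is then immediate for $u\neq 0$: the set $\{\mu>0:\Phi_u(\mu)\le 1\}$ equals the closed half-line $[\mu^\ast,\infty)$, where $\mu^\ast$ is the unique solution of $\Phi_u(\mu^\ast)=1$, so the infimum defining $\|u\|_{q(z)}$ is attained at $\mu^\ast$ and $\|u\|_{q(z)}=\lambda \iff \lambda=\mu^\ast \iff \rho_{q(z)}(u/\lambda)=1$; the case $u=0$ is trivial since then $\|u\|_{q(z)}=\rho_{q(z)}(u)=0$. Assertion (i) follows by comparing $\mu=1$ with $\mu=\mu^\ast=\|u\|_{q(z)}$: because $\Phi_u$ is strictly decreasing and passes through the value $1$ at $\mu^\ast$, one has $\rho_{q(z)}(u)=\Phi_u(1)>1 \iff 1<\mu^\ast \iff \|u\|_{q(z)}>1$, and likewise with ``$=1$''; the remaining ``$<1$'' equivalence is forced by trichotomy (and $u=0$ is checked directly).

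For (iii) and (iv) I would normalize via (ii): writing $\lambda=\|u\|_{q(z)}$ for $u\neq 0$, the identity $\int_M |u|^{q(z)}\lambda^{-q(z)}\,dv_g=1$ combined with $\lambda^{-q^+}\le\lambda^{-q(z)}\le\lambda^{-q^-}$ (valid when $\lambda>1$) or $\lambda^{-q^-}\le\lambda^{-q(z)}\le\lambda^{-q^+}$ (valid when $\lambda<1$) gives, after extracting the constant powers of $\lambda$ from the integral and rearranging, $\lambda^{q^-}\le\rho_{q(z)}(u)\le\lambda^{q^+}$ when $\lambda>1$ (this is (iv)) and $\lambda^{q^+}\le\rho_{q(z)}(u)\le\lambda^{q^-}$ when $\lambda<1$ (this is (iii)). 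Finally (v) is a formal consequence of (i) and (iii): if $\|u_k-u\|_{q(z)}\to 0$ then eventually $\|u_k-u\|_{q(z)}<1$, so $\rho_{q(z)}(u_k-u)\le\|u_k-u\|_{q(z)}^{q^-}\to 0$; conversely if $\rho_{q(z)}(u_k-u)\to 0$ then eventually $\rho_{q(z)}(u_k-u)<1$, so by (i) $\|u_k-u\|_{q(z)}<1$ and then $\|u_k-u\|_{q(z)}\le\rho_{q(z)}(u_k-u)^{1/q^+}\to 0$. The step I expect to require the most care — and the genuine obstacle — is the continuity and limiting analysis of $\Phi_u$ together with the resulting attainment of the Luxemburg infimum, since this is precisely where the hypotheses $q^+<\infty$ and $q^->1$ enter and where the argument would fail for unbounded exponents.
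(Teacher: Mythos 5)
Your proof is correct, and the paper itself offers no argument for this proposition: it is quoted verbatim from the cited reference \cite{guo2015dirichlet}, where (as in the classical Fan--Zhao theory on Euclidean domains) the proof is exactly the one you give, namely the analysis of the map $\mu \mapsto \rho_{q(z)}(\mathrm{u}/\mu)$ (finiteness for all $\mu>0$ via $q^{+}<\infty$, strict monotonicity, continuity by dominated convergence, the limits at $0^{+}$ and $+\infty$, hence attainment of the Luxemburg infimum at the unique $\mu^{*}$ with modular value $1$), followed by the normalization trick $\rho_{q(z)}(\mathrm{u}/\|\mathrm{u}\|_{q(z)})=1$ for (iii)--(iv) and the combination of (i) and (iii)--(iv) for (v). So you have simply supplied, correctly and with the standard route, the proof that the paper delegates to its reference; no gaps.
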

\begin{definition} \cite{guo2015dirichlet}
The Sobolev space $ W^{1, q( z )} ( M )$ consists of such functions $ \mathrm{u}  \in L^{q( z )} ( M )$ for which $ D^{k} \mathrm{u}  \in L^{q( z )} ( M )$ $k = 1, 2, \cdots, n.$ The norm is defined by 
$$ ||\, \mathrm{u} \,||_{W^{1, q( z )} ( M )} = ||\, \mathrm{u}  \,||_{L^{q( z )} ( M )} + \sum_{k = 1}^{n} ||\, D^{k} \mathrm{u}  \,||_{L^{q( z )} ( M )}.$$ 
The space $ W_{0}^{1, q( z )} ( M )$ is defined as the closure of $ C^{\infty} ( M ) $ in $ W^{1, q( z )} ( M ).$
\end{definition}
\begin{theorem}\label{theo2} \cite{benslimane2020existence4}
Let $M$ be a compact Riemannian manifold with a smooth boundary or without boundary and $ q( z ), \, p( z ) \in C( \overline{M} ) \cap L^{\infty} ( M ).$ Assume that $$ q( z ) < N , \hspace*{0.5cm} p( z ) < \frac{N\, q( z )}{N - q( z )} \,\, \mbox{for} \,\, z \in \overline{M}.$$
Then, $$ W^{1, q( z )} ( M ) \hookrightarrow L^{p( z )} ( M ),$$
is a continuous and compact embedding.
\end{theorem}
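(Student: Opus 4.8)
\noindent\emph{Proof plan.} The statement is a variable--exponent Rellich--Kondrachov theorem on a compact manifold, and the natural route is to localize to $\mathbb{R}^{N}$ and invoke the Euclidean version (as in \cite{benslimane2020existence4}; see also \cite{fan2001spaces}). First I would fix a finite atlas $\{(\Omega_{i},\phi_{i})\}_{i=1}^{m}$ of $\overline{M}$ with a subordinate smooth partition of unity $\{\eta_{i}\}_{i=1}^{m}$ and, shrinking the charts if necessary, arrange that $\frac12\delta_{kl}\le g_{kl}\le 2\delta_{kl}$ as bilinear forms on each $\Omega_{i}$ (possible since $g$ is smooth and $\overline{M}$ is compact); when $\partial M\ne\emptyset$ the charts meeting the boundary map onto bounded half--balls, which changes nothing below. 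On each chart the push--forwards $\tilde q_{i}:=q\circ\phi_{i}^{-1}$ and $\tilde p_{i}:=p\circ\phi_{i}^{-1}$ are continuous on the bounded set $\phi_{i}(\Omega_{i})\subset\mathbb{R}^{N}$, the metric comparison makes $dv_{g}$ comparable to the Lebesgue measure and $|Du|_{g}$ comparable to the Euclidean gradient, and, since $\overline{M}$ is compact and $p,q\in C(\overline{M})$, the pointwise hypothesis $p(z)<q^{*}(z):=\frac{Nq(z)}{N-q(z)}$ holds \emph{uniformly}: $\inf_{z\in\overline{M}}\bigl(q^{*}(z)-p(z)\bigr)>0$.

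For the continuous embedding I would either apply Proposition \ref{prop6} directly (a compact manifold satisfies $B_{vol}(\lambda,v)$ for suitable $\lambda,v$, its Ricci curvature being bounded below and $z\mapsto|B_{1}(z)|_{g}$ being continuous and positive on the compact $M$), after slipping in an intermediate exponent $p_{1}$ with $\max\{p(z),q(z)\}<p_{1}(z)<q^{*}(z)$ for all $z$, so that $W^{1,q(z)}(M)\hookrightarrow L^{p_{1}(z)}(M)$ and then $L^{p_{1}(z)}(M)\hookrightarrow L^{p(z)}(M)$ because $M$ has finite volume; or argue chart by chart, transporting each $\eta_{i}u$ to $\mathbb{R}^{N}$, applying the Euclidean variable--exponent Sobolev inequality there, and summing back, using Proposition \ref{prop7} to pass between the norms and the (non-homogeneous) modulars. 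Either way one reaches $\|u\|_{L^{p(z)}(M)}\le C\,\|u\|_{W^{1,q(z)}(M)}$.

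For compactness I would take $(u_{k})$ bounded in $W^{1,q(z)}(M)$ and produce a subsequence convergent in $L^{p(z)}(M)$. By the triangle inequality and the finiteness of the partition of unity, it suffices to show that each sequence $(\eta_{i}u_{k})_{k}$ has a subsequence converging strongly in $L^{p(z)}(M)$; transported to $\phi_{i}(\Omega_{i})$ it remains bounded in $W^{1,\tilde q_{i}(x)}(\phi_{i}(\Omega_{i}))$, so the Euclidean Rellich--Kondrachov theorem for variable exponents --- valid on bounded domains precisely when $\inf\bigl(\tilde q_{i}^{*}-\tilde p_{i}\bigr)>0$, which the uniform gap guarantees --- yields such a subsequence. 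A diagonal extraction over $i=1,\dots,m$ then produces a single subsequence of $(u_{k})$ convergent in $L^{p(z)}(M)$; using reflexivity of $W^{1,q(z)}(M)$ one may pass to a further subsequence that also converges weakly there, and the $L^{p(z)}$--limit is then identified with that weak limit. Hence the embedding is compact.

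\noindent\textbf{Main obstacle.} The difficulty here is bookkeeping rather than conceptual. Because the modulars $\rho_{q(\cdot)}$ and $\rho_{p(\cdot)}$ are not homogeneous, one must check that a bound on $\|u_{k}\|_{W^{1,q(z)}(M)}$ really transfers into bounds on $\|\eta_{i}u_{k}\|_{W^{1,\tilde q_{i}(x)}(\phi_{i}(\Omega_{i}))}$; here the norm/modular equivalences of Proposition \ref{prop7} combined with the comparison $\frac12\delta\le g\le 2\delta$ on each chart do the job, but must be applied with care. One must also verify that the regularity assumed on $p,q$ suffices for both the Euclidean embedding and its compactness; continuity on the compact set $\overline{M}$ is more than enough, the truly essential ingredient being the uniform gap $\inf_{\overline{M}}(q^{*}-p)>0$, which is exactly what turns the continuous embedding into a compact one.
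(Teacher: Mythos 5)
The paper does not actually prove Theorem \ref{theo2}: the statement is imported verbatim, with a citation, from \cite{benslimane2020existence4}, and the only ``proof'' in the present text is the one-line remark under Theorem \ref{theo3} that the demonstration is the same. Your reconstruction --- finite atlas on which $\tfrac12\delta_{kl}\le g_{kl}\le 2\delta_{kl}$, subordinate partition of unity, transfer of each $\eta_i u$ to a bounded chart image in $\mathbb{R}^N$, the Euclidean variable-exponent Rellich--Kondrachov theorem there, and a diagonal extraction to reassemble a convergent subsequence --- is exactly the standard route for such results (and the one followed in \cite{benslimane2020existence4}, resting on the Euclidean theorems of \cite{fan2001spaces,fan2001compact}), and I find no gap in it; in particular you correctly isolate the uniform gap $\inf_{\overline M}(q^{*}-p)>0$, guaranteed by continuity on the compact $\overline M$, as the decisive hypothesis. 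Two minor points worth recording if this were written out in full: the Euclidean compact embedding requires the chart images to satisfy the cone (or Lipschitz) property, which the balls and half-balls you choose do; and it requires $q^{-}>1$, which the bare statement of Theorem \ref{theo2} does not assert but which holds throughout the paper by hypothesis (i).
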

\begin{theorem}\label{theo3} 
Let $M$ be a compact Riemannian manifold with a smooth boundary or without boundary and $ q( z ), \, r( z ) \in C( \overline{M} ) \cap L^{\infty} ( M ).$ Assume that $$ q( z ) < N , \hspace*{0.5cm} r( z ) < \frac{N\, q( z )}{N - q( z )} \,\, \mbox{for} \,\, z \in \overline{M}.$$
Then, $$ W^{1, q( z )} ( M ) \hookrightarrow L^{r( z )} ( M ),$$
is a continuous and compact embedding.
\end{theorem}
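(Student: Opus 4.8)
The plan is to observe that Theorem~\ref{theo3} is nothing but Theorem~\ref{theo2} with the target exponent renamed: the hypotheses imposed here on $r(\cdot)$ — namely $r\in C(\overline{M})\cap L^{\infty}(M)$, $q(z)<N$, and the strict subcritical condition $r(z)<\tfrac{Nq(z)}{N-q(z)}$ for every $z\in\overline{M}$ — are word for word the hypotheses imposed on $p(\cdot)$ in Theorem~\ref{theo2}. So the first and shortest step is simply to apply Theorem~\ref{theo2} with $p$ replaced by $r$, which yields at once both the continuity and the compactness of the embedding $W^{1,q(z)}(M)\hookrightarrow L^{r(z)}(M)$. I would present the proof essentially as this one-line reduction.

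For completeness I would also recall the self-contained localization argument behind such an embedding. Since $\overline{M}$ is compact, I would cover it by a finite atlas $\{(\Omega_i,\phi_i)\}_{i=1}^{m}$ of charts in which the metric is two-sided comparable to the Euclidean one, $\tfrac{1}{2}\delta_{kl}\le g_{kl}\le 2\,\delta_{kl}$, and fix a smooth partition of unity $\{\alpha_i\}$ subordinate to it. Given $\mathrm{u}\in W^{1,q(z)}(M)$, I would decompose $\mathrm{u}=\sum_{i=1}^{m}\alpha_i\mathrm{u}$; each $\alpha_i\mathrm{u}$ is compactly supported in $\Omega_i$ and, transported by $\phi_i$, becomes a function in $W_0^{1,\,q\circ\phi_i^{-1}}(\phi_i(\Omega_i))$ whose norm is controlled by $\|\mathrm{u}\|_{W^{1,q(z)}(M)}$, because the metric comparability makes Riemannian and Euclidean volumes, gradients and $L^{q(z)}$-norms equivalent up to constants, and composition with $\phi_i^{-1}$ preserves the modulus of continuity of the exponents (this is exactly the content of the proposition relating $\mathcal{P}^{\log}(M)$ and $\mathcal{P}^{\log}(\mathbb{R}^{N})$ in the excerpt). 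Then I would invoke the classical compact embedding $W_0^{1,s(x)}(U)\hookrightarrow L^{t(x)}(U)$ for bounded Euclidean domains, valid whenever $s,t$ are uniformly continuous on $\overline{U}$ with $t(x)<s^{\ast}(x)=\tfrac{Ns(x)}{N-s(x)}$ pointwise. Pulling the resulting estimates back to $M$ and summing over $i$ gives the continuous embedding; compactness then follows by extracting, from a bounded sequence in $W^{1,q(z)}(M)$, successive subsequences convergent in each $L^{r(z)}(\Omega_i)$, hence in $L^{r(z)}(M)$.

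The point that needs care — if one does not simply cite Theorem~\ref{theo2} — is the role of the \emph{strict} inequality $r(z)<\tfrac{Nq(z)}{N-q(z)}$: by continuity of $q$ and $r$ on the compact set $\overline{M}$ it upgrades to a uniform gap $\inf_{z\in\overline{M}}\bigl(\tfrac{Nq(z)}{N-q(z)}-r(z)\bigr)>0$, and it is precisely this uniform subcriticality margin that turns the Euclidean embedding from merely continuous into compact, and that must be seen to survive multiplication by $\alpha_i$ and the coordinate changes. Checking that this margin, together with the norm equivalences, is uniform over the finite atlas is the only genuine bookkeeping. Since Proposition~\ref{prop6} and Theorem~\ref{theo2} of the excerpt already encapsulate exactly these facts for the exponent $p$, I would state Theorem~\ref{theo3} as an immediate corollary of Theorem~\ref{theo2} and relegate the localization sketch to a remark.
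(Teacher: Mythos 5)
Your one-line reduction — apply Theorem~\ref{theo2} with $p$ replaced by $r$ — is exactly what the paper does; its proof of Theorem~\ref{theo3} reads in full ``the demonstration of this theorem is the same as the previous one.'' Your additional localization sketch is a correct and welcome elaboration, but it is not needed to match the paper's argument.
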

\begin{proof}
The demonstration of this theorem is the same as the previous one.
\end{proof} 
\begin{proposition} \cite{aubin1982nonlinear}
If $( M, g )$ is complete, then  $W^{1, q( z )} ( M ) = W^{1, q( z )}_{0} ( M ).$
\end{proposition}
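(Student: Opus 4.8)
The plan is to mimic Aubin's classical proof that $W^{1,q}(M)=W^{1,q}_0(M)$ on a complete manifold, replacing the homogeneity-based norm estimates by the modular estimates of Proposition~\ref{prop7} so that the variable exponent causes no extra trouble. Since the inclusion $W^{1,q(z)}_0(M)\subseteq W^{1,q(z)}(M)$ is immediate from the definition of $W^{1,q(z)}_0(M)$, it is enough to prove the reverse inclusion, and for that I would show that $C^\infty_c(M)$ (smooth functions with compact support, a subset of $C^\infty(M)$) is dense in $W^{1,q(z)}(M)$. The argument splits into a global truncation step, where completeness is used, and a local smoothing step, where charts and a partition of unity are used.

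First I would do the truncation. Fix $x_0\in M$ and put $\rho(z)=d_g(z,x_0)$. Completeness gives, via Hopf--Rinow, that the closed balls $\overline{B_R(x_0)}$ are compact and that $\rho$ is $1$-Lipschitz, hence differentiable almost everywhere with $|D\rho|\le 1$. For each $k$ I would pick a Lipschitz cutoff $\eta_k:M\to[0,1]$, say $\eta_k=\zeta_k\circ\rho$ with $\zeta_k$ piecewise linear, so that $\eta_k\equiv 1$ on $B_k(x_0)$, $\eta_k\equiv 0$ off $B_{2k}(x_0)$, and $|D\eta_k|\le C/k$ a.e. Given $u\in W^{1,q(z)}(M)$, set $u_k=\eta_k u$, which has compact support. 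Then $u-u_k=(1-\eta_k)u$ vanishes on $B_k(x_0)$, so $\rho_{q(z)}(u-u_k)\to 0$ by dominated convergence for the modular; writing $Du_k=\eta_k\,Du+u\,D\eta_k$, the term $\eta_k\,Du-Du$ tends to $0$ in modular the same way, and for $k$ large enough that $C/k\le 1$ one has $\rho_{q(z)}(u\,D\eta_k)\le (C/k)^{q^-}\,\rho_{q(z)}(u)\to 0$ using $\rho_{q(z)}(\lambda f)\le\lambda^{q^-}\rho_{q(z)}(f)$ for $\lambda\le1$. Applying Proposition~\ref{prop7}(v) to the $L^{q(z)}$ components (function and gradient) then gives $u_k\to u$ in $W^{1,q(z)}(M)$, so it remains to approximate a compactly supported $v\in W^{1,q(z)}(M)$ by elements of $C^\infty_c(M)$.

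For the local smoothing I would cover the compact support $K$ of $v$ by finitely many charts $(\Omega_j,\phi_j)$, $j=1,\dots,m$, chosen (after shrinking) so that $\tfrac12\delta_{ab}\le g_{ab}\le 2\,\delta_{ab}$ on each; by the proposition linking $\mathcal{P}^{log}(M)$ with $\mathcal{P}^{log}(\mathbb{R}^N)$, each $q\circ\phi_j^{-1}$ is log-Hölder continuous on $\phi_j(\Omega_j)$. Taking a smooth partition of unity $\{\chi_j\}$ subordinate to $\{\Omega_j\}$ with $\sum_j\chi_j\equiv1$ near $K$, I write $v=\sum_j\chi_j v$ with $\chi_j v$ supported in $\Omega_j$; pushing $\chi_j v$ forward by $\phi_j$ yields a compactly supported element of a Euclidean variable exponent Sobolev space with log-Hölder exponent, where the classical density (Meyers--Serrin-type) theorem for such spaces lets mollification produce $C^\infty_c$ approximants. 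Pulling these back and summing --- the two-sided bound on $g$ makes the Euclidean and Riemannian modulars on each $\Omega_j$ comparable --- gives $w^\varepsilon\in C^\infty_c(M)$ with $w^\varepsilon\to v$ in $W^{1,q(z)}(M)$. Combining the two steps shows every $u\in W^{1,q(z)}(M)$ is a $W^{1,q(z)}$-limit of functions in $C^\infty_c(M)\subseteq C^\infty(M)$, hence lies in $W^{1,q(z)}_0(M)$, which is the asserted equality.

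The hard part will be the truncation step: making precise that completeness (through Hopf--Rinow and the Lipschitz, proper distance function $z\mapsto d_g(z,x_0)$) supplies cutoffs with the gradient bound $|D\eta_k|\le C/k$, and, in the variable exponent setting, controlling the cross term $u\,D\eta_k$ by the modular inequality $\rho_{q(z)}(\lambda f)\le\lambda^{q^-}\rho_{q(z)}(f)$ rather than by the homogeneity available only for constant exponents. A secondary point to watch is that the Euclidean mollification genuinely uses log-Hölder regularity of the exponent, which is why the chart normalization $\tfrac12\delta\le g\le2\delta$ and the cited $\mathcal{P}^{log}$ correspondence are brought in.
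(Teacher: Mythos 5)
The paper does not actually prove this proposition; it is quoted from Aubin's book, which treats constant exponents, so there is no internal proof to match your argument against. Your reconstruction follows the natural route, and the truncation half is sound: completeness via Hopf--Rinow gives the proper, $1$-Lipschitz distance function and cutoffs $\eta_k$ with $|D\eta_k|\le C/k$, and the cross term $u\,D\eta_k$ is correctly handled by the modular estimate $\rho_{q(z)}(\lambda f)\le \lambda^{q^-}\rho_{q(z)}(f)$ for $0<\lambda\le 1$ together with the equivalence of modular and norm convergence (item (v) of the modular proposition). Also note that the paper defines $W^{1,q(z)}_0(M)$ as the closure of $C^\infty(M)$, not of compactly supported smooth functions, so you prove more than is strictly required (which is harmless), and since the manifold in this paper is in fact compact, the truncation step is vacuous there --- completeness does real work only in the noncompact case your sketch addresses.

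The genuine gap is in the smoothing half. Density of smooth functions in a Euclidean variable-exponent Sobolev space is not automatic: this is precisely where the Lavrentiev phenomenon recalled in the paper's introduction can obstruct the argument, and the mollification step you invoke is only known to work under log-H\"older regularity (or a comparable modulus-of-continuity condition) of the exponent. You secure it by appealing to the chart proposition sending $\mathcal{P}^{log}(M)$ into $\mathcal{P}^{log}(\phi(\Omega))$, but that proposition assumes $q\in\mathcal{P}^{log}(M)$, whereas the statement you are proving assumes only that $(M,g)$ is complete, and the paper's standing hypotheses give merely $q\in C(\overline{M})$. As written, your proof therefore establishes the equality only for log-H\"older exponents: either that hypothesis must be added explicitly (which is consistent with the paper's framework, since $\mathcal{P}^{log}(M)$ is introduced for exactly this purpose), or one needs a different density mechanism valid for merely continuous exponents, which your sketch does not supply. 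Flagging this missing hypothesis is the one substantive correction; the rest of the outline is the standard and correct adaptation of Aubin's argument.
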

The weighted variable exponent Lebesgue space $L_{\mu( z )}^{q( z )} ( M )$ is defined as follows:
$$L_{\mu( z )}^{q( z )} ( M ) = \{ \mathrm{u} : M \rightarrow \mathbb{R} \,\, \mbox{is measurable such that}, \int_{M} \mu( z ) \, | \mathrm{u}  ( z ) |^{q( z )} \,\, dv_{g} ( z ) < + \infty \},$$
with the norm $$ || \mathrm{u}  ||_{q( z ), \mu( z )} = \inf \{ \alpha > 0; \int_{M} \mu ( z ) \, \bigg| \frac{\mathrm{u} ( z )}{\alpha} \bigg|^{q( z )} \,\, dv_{g} ( z ) \leq 1 \,\}.$$
Moreover, the weighted modular on $L_{\mu ( z )}^{q( z )} ( M )$ is the mapping $ \rho_{q( \cdot ), \mu ( \cdot )} : L_{\mu ( z )}^{q( z )} ( M ) \rightarrow \mathbb{R}$ defined like
$$ \rho_{q( \cdot ), \mu ( \cdot )} ( \mathrm{u}  ) = \int_{M} \mu ( z ) | \mathrm{u} ( z )|^{q( z )} \,\, dv_{g} ( z ).$$
\begin{ex} 
As a simple example of $\mu ( z ),$ we can take $ \mu ( z ) = ( 1 + | z | )^{\varepsilon ( z )} $ with $\varepsilon ( \cdot ) \in C_{+} ( \overline{M} ),$
\end{ex}
where $ C_{+} ( \overline{M} ) = \{ q/ q \in C( \overline{M} )\,\, \mbox{with} \,\, q( z ) > 1 \,\, \mbox{for} \,\, z \in \overline{M} \}.$
\section{Existence results:}\label{sec3}
In this section, we are going to prove our main result stated as Theorem \ref{theo1} in section \ref{section1}. For this, we recall that $\mathrm{ J}_{\lambda} : W_{0}^{1, q( z )} ( M ) \rightarrow \mathbb{R}$ is the corresponding energy function for the problem $( \mathcal{P} )$ given by:
\begin{align*}
\mathrm{ J}_{\lambda} ( \mathrm{u} ) = & \int_{M}\frac{1}{p( z )}\, | D \mathrm{u} ( z ) |^{p( z )} \,\, dv_{g} ( z ) + \int_{M} \frac{\mu ( z )}{q( z )}\, | D \mathrm{u} ( z ) |^{q( z )} \,\, dv_{g} ( z ) \\&-  \int_{M}\frac{1}{1 - \gamma ( z )} \,g( z )\, | \mathrm{u} ( z ) |^{1 - \gamma ( z )} \,\, dv_{g} ( z ) - \int_{M} \frac{\lambda}{r( z )} \,| \mathrm{u} ( z ) |^{r( z )} \,\, dv_{g} ( z ).
\end{align*}
From the presence of the singular term $g( z ) \, | \mathrm{u} ( z ) |^{1 - \gamma ( z )},$ we known that $\mathrm{ J}_{\lambda}$ is not $C^{1}.$ In order to overcome this, we will make use the Nehari manifold corresponding to the functional $\mathrm{ J}_{\lambda} $ defined as follows
\begin{align*}
\mathcal{N}_{\lambda} = \{  \mathrm{u} \in W_{0}^{1, q( z )} ( M ) \backslash  \{ 0\}; \,&|| D \mathrm{u} ||_{p( z )}^{p( z )} + \int_{M} \mu ( z )\, | D \mathrm{u} ( z ) |^{q( z )} \,\, dv_{g} ( x ) \\&= \int_{M} g( z ) | \mathrm{u} ( z ) |^{1 - \gamma ( z )} \,\, dv_{g} ( z ) + \lambda || \mathrm{u} ||_{r( z )}^{r( z )} \,\}.
\end{align*}
It is easy to see that $\mathcal{N}_{\lambda}$ is smaller than $ W_{0}^{1, q( z )} ( M )$ and it contains the weak solutions of problem $( \mathcal{P} )$. The functional $\mathrm{ J}_{\lambda} \bigg|_{\mathcal{N}_{\lambda}}$ can have nice properties which fail to be true globally.\\
For further considerations we decompose the set $\mathcal{N}_{\lambda}$ into three disjoint parts:
\begin{align*}
&\mathcal{N}_{\lambda}^{+} = \{ \mathrm{u} \in \mathcal{N}_{\lambda} : \int_{M} ( p( z ) - \gamma ( z ) -1 ) \, | D \mathrm{u} ( z ) |^{p( z )} \,\, dv_{g} ( z ) \\&+  \int_{M} ( q( z ) + \gamma ( z ) - 1 ) \, \mu ( z )\, | D \mathrm{u} ( z ) |^{q( z )} \,\,dv_{g} ( z ) - \lambda \int_{M} ( r( z ) + \gamma ( z ) - 1 ) | \mathrm{u} ( z ) |^{r( z )} \,\, dv_{g} ( z ) > 0 \,\},
\end{align*}
\begin{align*}
&\mathcal{N}_{\lambda}^{0} = \{ \mathrm{u} \in \mathcal{N}_{\lambda} : \int_{M} ( p( z ) - \gamma ( z ) -1 ) \, | D \mathrm{u} ( z ) |^{p( z )} \,\, dv_{g} ( z ) \\&+ \int_{M} ( q( z ) + \gamma ( z ) - 1 ) \, \mu ( z )\, | D \mathrm{u} ( z ) |^{q( z )} \,\,dv_{g} ( z ) = \lambda \int_{M} ( r( z ) + \gamma ( z ) - 1 ) | \mathrm{u} ( z ) |^{r( z )} \,\, dv_{g} ( z ) \,\},
\end{align*}
\begin{align*}
&\mathcal{N}_{\lambda}^{-} = \{ \mathrm{u} \in \mathcal{N}_{\lambda} : \int_{M} ( p( z ) - \gamma ( z ) -1 ) \,| D \mathrm{u} ( z ) |^{p( z )} \,\, dv_{g} ( z ) \\&+ \int_{M} ( q( z ) + \gamma ( z ) - 1 ) \,\mu ( z )\, | D \mathrm{u} ( z ) |^{q( z )} \,\,dv_{g} ( z ) - \lambda \int_{M} ( r( z ) + \gamma ( z ) - 1 ) | \mathrm{u} ( z ) |^{r( z )} \,\, dv_{g} ( z )  < 0 \,\}.
\end{align*}
Note that, since $ \mu ( \cdot ) : \overline{M} \longrightarrow [ 1,\, + \infty ),$ then, there exists $\mu_{0} > 0,$ and for all $ z \in M,$ we have that $ \mu ( z ) > \mu_{0}.$\\
We start with the following Lemma about the coercivity of the energy functional $\mathrm{ J}_{\lambda} \bigg|_{\mathcal{N}_{\lambda}}$.
\begin{lemma}\label{lemma1}
If hypotheses (i)-(v) hold. Then, $\mathrm{ J}_{\lambda} \bigg|_{\mathcal{N}_{\lambda}}$ is coercive.
\end{lemma}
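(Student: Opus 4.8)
The plan is to exploit the constraint defining $\mathcal N_\lambda$ in order to eliminate the troublesome superlinear term from $J_\lambda$, and then to bound what is left from below by a superlinear power of $\|Du\|_{L^{q(z)}(M)}$ minus genuinely lower-order quantities. Concretely, fix $u\in\mathcal N_\lambda$. Its defining relation reads
$$\int_M|Du|^{p(z)}\,dv_g+\int_M\mu(z)|Du|^{q(z)}\,dv_g-\int_M g(z)|u|^{1-\gamma(z)}\,dv_g-\lambda\int_M|u|^{r(z)}\,dv_g=0,$$
so, subtracting $\tfrac1{r^-}$ times its left-hand side from $J_\lambda(u)$ and regrouping, one obtains
\begin{align*}
J_\lambda(u)=&\int_M\Big(\tfrac1{p(z)}-\tfrac1{r^-}\Big)|Du|^{p(z)}\,dv_g+\int_M\Big(\tfrac1{q(z)}-\tfrac1{r^-}\Big)\mu(z)|Du|^{q(z)}\,dv_g\\
&-\int_M\Big(\tfrac1{1-\gamma(z)}-\tfrac1{r^-}\Big)g(z)|u|^{1-\gamma(z)}\,dv_g+\lambda\int_M\Big(\tfrac1{r^-}-\tfrac1{r(z)}\Big)|u|^{r(z)}\,dv_g.
\end{align*}
By \eqref{1} we have $p(z)\le p^+<r^-$, $q(z)\le q^+<r^-$ and $r(z)\ge r^-$; by (iv), $0<\tfrac1{1-\gamma(z)}-\tfrac1{r^-}\le\tfrac1{1-\gamma^+}$; and $\mu(z)\ge\mu_0>0$ while $g\in L^\infty(M)$ with $g\ge0$. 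Discarding the last integral and the $p(z)$-integral, both nonnegative, leaves
$$J_\lambda(u)\ \ge\ \Big(\tfrac1{q^+}-\tfrac1{r^-}\Big)\mu_0\int_M|Du|^{q(z)}\,dv_g-\frac{\|g\|_{L^\infty(M)}}{1-\gamma^+}\int_M|u|^{1-\gamma(z)}\,dv_g.$$

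For the remaining negative term, $0<1-\gamma(z)<1$ forces $|u|^{1-\gamma(z)}\le 1+|u|$ a.e., hence, $M$ being compact, Hölder's inequality (using $q^->1$) gives $\int_M|u|^{1-\gamma(z)}\,dv_g\le|M|_g+C\|u\|_{L^{q(z)}(M)}$. A Poincaré-type inequality on $W_0^{1,q(z)}(M)$ — equivalently, $\|u\|_{W_0^{1,q(z)}(M)}$ being comparable to $\|Du\|_{L^{q(z)}(M)}$ — then yields $\|u\|_{L^{q(z)}(M)}\le C'\|Du\|_{L^{q(z)}(M)}$, so that $J_\lambda(u)\ge c_0\int_M|Du|^{q(z)}\,dv_g-c_1\|Du\|_{L^{q(z)}(M)}-c_2$ with $c_0>0$. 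Finally Proposition \ref{prop7}(iv) gives $\int_M|Du|^{q(z)}\,dv_g\ge\|Du\|_{L^{q(z)}(M)}^{\,q^-}$ once $\|Du\|_{L^{q(z)}(M)}>1$, and since $q^->1$ the bound $c_0 t^{q^-}-c_1 t-c_2\to+\infty$ as $t\to\infty$; as $\|u\|_{W_0^{1,q(z)}(M)}\to\infty$ forces $\|Du\|_{L^{q(z)}(M)}\to\infty$, this proves that $J_\lambda\big|_{\mathcal N_\lambda}$ is coercive.

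The only genuinely delicate step is the second one: one must ensure the negative singular term is \emph{strictly} of lower order than the $q(z)$-gradient term. This is precisely where the hypotheses $1-\gamma^+<1<q^-$ and the (compact-manifold) Poincaré inequality enter; everything else is routine bookkeeping with the modular–norm inequalities of Proposition \ref{prop7} and the strict chain $q^+<p^+<r^-$ from \eqref{1}.
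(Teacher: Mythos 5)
Your proof is correct and follows essentially the same route as the paper: both use the Nehari constraint to eliminate the superlinear $\lambda$-term from $J_\lambda$, keep a positive gradient modular with coefficient controlled by $q^{+}<p^{+}<r^{-}$, and show the singular term is of strictly lower order via $1-\gamma^{+}<1$ and a Poincar\'e-type inequality. If anything, your choice to retain the $q(z)$-gradient modular (rather than the paper's $p(\cdot)$-modular) matches the ambient norm of $W_{0}^{1,q(z)}(M)$ more directly, and your sign bookkeeping for the singular term is cleaner than the paper's final display.
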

\begin{proof}
Let $ \mathrm{u} \in \mathcal{N}_{\lambda} $ with $|| \mathrm{u} || > 1,$ where $||\, \cdot\,||$ is the induced norm of $W^{1, q( z )}_{0} ( M ) \backslash \{ 0\}.$ From the definition of the Nehari manifold $\mathcal{N}_{\lambda}$ we have 
\begin{align*}
\lambda \int_{M} | \mathrm{u} ( z ) |^{r( z )} \,\, dv_{g} ( z ) = &\int_{M} | D \mathrm{u} ( z ) |^{p( z )} \,\, dv_{g} ( z ) + \int_{M} \mu ( z ) \, | D \mathrm{u} ( z ) |^{q( z )} \,\,dv_{g} ( z ) \\&- \int_{M} g( z ) \, | \mathrm{u} ( z ) |^{1 - \gamma ( z )} \,\, dv_{g} ( z ). 
\end{align*}
Hence, according to \eqref{1}, propositions \ref{prop5}, \ref{prop7}, Theorems \ref{theo2}, \ref{theo3}  and based on inequality 2.3 in \cite{benslimane2020existence4} we obtain
\begin{align*}
\mathrm{ J}_{\lambda} ( \mathrm{u} )& = \int_{M} \frac{1}{p( z )} | D \mathrm{u} ( z ) |^{p( z )} \,\, dv_{g} ( z ) + \int_{M} \frac{\mu( z )}{q( z )} | D \mathrm{u} ( z ) |^{q( z )} \,\, dv_{g} ( z )\\& \hspace*{0.3cm} - \int_{M} \frac{1}{1 - \gamma ( z )} \, g( z ) \, | \mathrm{u} ( z ) |^{1 - \gamma ( z )} \,\, dv_{g} ( z ) - \lambda \, \int_{M} \frac{1}{r( z )} | \mathrm{u} ( z ) |^{r( z )} \,\, dv_{g} ( z ) \\& \geq \frac{1}{p^{+}} \int_{M} | D \mathrm{u} ( z ) |^{p( z )} \,\, dv_{g} ( z ) + \frac{1}{q^{+}} \int_{M} \mu ( z ) | D \mathrm{u} ( z ) |^{q( z )} \,\, dv_{g} ( z )\\& \hspace*{0.3cm} - \frac{1}{1 - \gamma^{+}} \int_{M} g( z ) | \mathrm{u} ( z ) |^{1 - \gamma ( z )} \,\, dv_{g} ( z ) - \frac{\lambda}{r^{-}} \int_{M} | \mathrm{u} ( z ) |^{r( z )} \,\, dv_{g} ( z )\\& = \frac{1}{p^{+}} \int_{M} | D \mathrm{u} ( z ) |^{p( z )} \,\, dv_{g} ( z ) + \frac{1}{q^{+}} \int_{M} \mu ( z ) | D \mathrm{u} ( z ) |^{q( z )} \,\, dv_{g} ( z )\\& \hspace*{0.3cm} - \frac{1}{1 - \gamma^{+}} \int_{M} g( z ) | \mathrm{u} ( z ) |^{1 - \gamma ( z )} \,\, dv_{g} ( z ) - \frac{1}{r^{-}} \bigg[ \int_{M} | D \mathrm{u} ( z )|^{p( z )} \,\,dv_{g} ( z ) \\& \hspace*{0.3cm} + \int_{M} \mu( z ) |D \mathrm{u} ( z )|^{q( z )} \,\,dv_{g} ( z ) - \int_{M} g( z ) \, | \mathrm{u} ( z ) |^{1 - \gamma ( z )} \,\, dv_{g} ( z ) \, \bigg] \\&\geq \bigg( \frac{1}{p^{+}} - \frac{1}{r^{-}} \bigg) \int_{M} | D \mathrm{u} ( z ) |^{p( z )} \,\, dv_{g} ( z ) + \mu_{0} \, \bigg( \frac{1}{q^{+}} - \frac{1}{r^{-}} \bigg) \int_{M} | D \mathrm{u} ( z ) |^{q( z )} \,\, dv_{g} ( z ) \\& \hspace*{0.3cm} + \bigg( \frac{1}{r^{-}} - \frac{1}{1 - \gamma^{+}} \bigg) \int_{M} g( z ) \, | \mathrm{u} ( z ) |^{1 - \gamma ( z )} \,\, dv_{g} ( z ) \\& \geq \bigg[ \frac{1}{c} \,\bigg( \frac{1}{p^{+}} - \frac{1}{r^{-}} \bigg) + \frac{\mu_{0}}{D^{p^{+}} (c + 1)p^{+}} \, \bigg( \frac{1}{q^{+}} - \frac{1}{r^{-}} \bigg) \bigg] \, \rho_{p( . )} ( \mathrm{u} )\\& \hspace*{0.3cm}+ \bigg( \frac{1}{r^{-}} - \frac{1}{1 - \gamma^{+}} \bigg) \int_{M} g( z )\,| \mathrm{u} ( z ) |^{1 - \gamma ( z )} \,\, dv_{g} ( z ) \\& \geq c_{1} \, || \mathrm{u} ||^{p^{+}} + c_{2} \, || \mathrm{u} ||^{1 - \gamma^{+}},
\end{align*}
for some $c_{1}, c_{2} > 0 $ ( since $ q^{+} < p^{+} < r^{-} $ ), and $c$ is the Poincaré constant.\\
From that, since $ 0 < \gamma^{+} < 1$ and $ 1 - \gamma^{+} < 1 < p^{+},$ we conclude that $\mathrm{ J}_{\lambda} \bigg|_{\mathcal{N}_{\lambda}}$ is coercive. 
\end{proof}
Let $ \sigma_{\lambda}^{+} = \inf_{\mathcal{N}_{\lambda}} \mathrm{ J}_{\lambda}.$
\begin{lemma}\label{lemma2}
If the assumptions (i)-(v) are satisfied and $ \mathcal{N}_{\lambda}^{+} \neq 0.$ Then $\sigma_{\lambda}^{+} < 0.$
\end{lemma}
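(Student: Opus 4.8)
The plan is to produce a single point of $\mathcal{N}_{\lambda}$ at which $J_{\lambda}$ is negative. Since $\mathcal{N}_{\lambda}^{+}\subset\mathcal{N}_{\lambda}$ and $\mathcal{N}_{\lambda}^{+}\neq\emptyset$ by assumption, it is enough to fix $\mathrm{u}\in\mathcal{N}_{\lambda}^{+}$ and show $J_{\lambda}(\mathrm{u})<0$; then $\sigma_{\lambda}^{+}=\inf_{\mathcal{N}_{\lambda}}J_{\lambda}\le J_{\lambda}(\mathrm{u})<0$. Conceptually this reflects the behaviour of the fibering map $t\mapsto J_{\lambda}(t\mathrm{u})$: being in $\mathcal{N}_{\lambda}$ makes it stationary at $t=1$, the sign condition defining $\mathcal{N}_{\lambda}^{+}$ makes $t=1$ a strict local minimum, and near $t=0$ the singular contribution $-\int_{M}\frac{1}{1-\gamma(z)}g\,|t\mathrm{u}|^{1-\gamma(z)}\,dv_{g}$ dominates (because $1-\gamma^{+}<1<q^{-}$) and pushes the map strictly below $0$. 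I would nonetheless carry this out by a direct estimate, which is cleaner when the exponents vary.

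First I would write down the Nehari identity satisfied by $\mathrm{u}$,
\[
\int_{M}|D\mathrm{u}|^{p(z)}\,dv_{g}+\int_{M}\mu\,|D\mathrm{u}|^{q(z)}\,dv_{g}=\int_{M}g\,|\mathrm{u}|^{1-\gamma(z)}\,dv_{g}+\lambda\int_{M}|\mathrm{u}|^{r(z)}\,dv_{g},
\]
and use it to remove the singular integral from $J_{\lambda}(\mathrm{u})$. Together with the pointwise bounds $\frac{1}{p(z)}\le\frac{1}{p^{-}}$, $\frac{1}{q(z)}\le\frac{1}{q^{-}}$, $\frac{1}{1-\gamma(z)}\ge\frac{1}{1-\gamma^{-}}$, $\frac{1}{r(z)}\ge\frac{1}{r^{+}}$, this gives an upper bound of the form
\[
J_{\lambda}(\mathrm{u})\le\Big(\tfrac{1}{p^{-}}-\tfrac{1}{1-\gamma^{-}}\Big)\mathcal{A}+\Big(\tfrac{1}{q^{-}}-\tfrac{1}{1-\gamma^{-}}\Big)\mathcal{B}+\Big(\tfrac{1}{1-\gamma^{-}}-\tfrac{1}{r^{+}}\Big)\lambda\,\mathcal{D},
\]
where $\mathcal{A}=\int_{M}|D\mathrm{u}|^{p(z)}\,dv_{g}$, $\mathcal{B}=\int_{M}\mu\,|D\mathrm{u}|^{q(z)}\,dv_{g}$, $\mathcal{D}=\int_{M}|\mathrm{u}|^{r(z)}\,dv_{g}$, and the coefficient of $\lambda\mathcal{D}$ is strictly positive since $1-\gamma^{-}<1<r^{+}$.

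Next I would invoke the inequality defining $\mathcal{N}_{\lambda}^{+}$; since $r(z)+\gamma(z)-1\ge r^{-}+\gamma^{-}-1>0$, it rearranges to an upper bound
\[
\lambda\,\mathcal{D}<\frac{1}{r^{-}+\gamma^{-}-1}\Big[(p^{+}-\gamma^{-}-1)\,\mathcal{A}+(q^{+}+\gamma^{+}-1)\,\mathcal{B}\Big],
\]
which I substitute into the previous display. This reduces everything to checking that the two resulting coefficients, of $\mathcal{A}$ and of $\mathcal{B}$, are both nonpositive — an elementary inequality in $p^{\pm},q^{\pm},r^{\pm},\gamma^{\pm}$ that, in the model case of constant exponents $p,q,r,\gamma$, collapses to consequences of $1<q<p<r$ and $0<\gamma<1$ (essentially $(p-1)(r-p)>0$ and $q<r$). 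Since $\mathrm{u}\neq0$ forces $\mathcal{A}+\mathcal{B}>0$, once both coefficients are nonpositive we obtain $J_{\lambda}(\mathrm{u})<0$, whence $\sigma_{\lambda}^{+}<0$.

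The step I expect to be delicate is exactly this coefficient check. Because the variable-exponent modulars are not homogeneous, the passage to the extremal constants $p^{\pm},r^{\pm},\gamma^{\pm}$ must be done consistently — the same exponent appearing in $\frac{1}{p(z)}$ and in $p(z)-\gamma(z)-1$, and likewise for $r(z)$ — since an inconsistent replacement can flip the sign; one either keeps the estimates sharp enough to avoid this, or argues the coefficient inequality directly from $1<q^{-}\le q^{+}<p^{-}\le p^{+}<r^{-}\le r^{+}$ together with $0<\gamma^{-}\le\gamma^{+}<1$. A minor preliminary, needed for the fibering picture though not for the estimate itself, is that $\int_{M}g\,|\mathrm{u}|^{1-\gamma(z)}\,dv_{g}>0$, which holds since $g\ge0$, $g\not\equiv0$ and an element of $\mathcal{N}_{\lambda}^{+}$ cannot vanish a.e.\ on $\{g>0\}$ without contradicting the two constraints above.
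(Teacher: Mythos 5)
Your argument is essentially the paper's own proof: fix $\mathrm{u}\in\mathcal{N}_{\lambda}^{+}$, use the Nehari identity to eliminate the singular integral from $J_{\lambda}(\mathrm{u})$, bound the reciprocals of the variable exponents by their extremal values, substitute the strict inequality defining $\mathcal{N}_{\lambda}^{+}$ to control $\lambda\int_{M}|\mathrm{u}|^{r(z)}\,dv_{g}$, and check that the resulting coefficients of the two gradient modulars are negative (which in the constant-exponent model reduces to $\frac{p+\gamma-1}{1-\gamma}\big(\frac{1}{r}-\frac{1}{p}\big)<0$ and its $q$-analogue). The delicate point you flag — consistency in replacing $p(z),q(z),r(z),\gamma(z)$ by $p^{\pm},q^{\pm},r^{\pm},\gamma^{\pm}$ — is genuine, and the paper's own estimate is in fact loose there (it mixes $\gamma^{+}$ and $\gamma^{-}$ and $r^{+}$ with $r^{-}$ across successive lines), so your explicit acknowledgment of it is, if anything, more careful than the source.
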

\begin{proof}
Suppose that $ \mathrm{u} \in \mathcal{N}_{\lambda}^{+},$ hence from the definition of $ \mathcal{N}_{\lambda}^{+}$ we have
$$ \lambda || \mathrm{u} ||_{r( z )}^{r( z )} < \frac{p^{+} + \gamma^{+} - 1}{r^{-} + \gamma^{-} - 1} || D \mathrm{u} ||_{p( z )}^{p( z )} + \frac{q^{+} + \gamma^{+} - 1}{r^{+} - \gamma^{-} - 1} \int_{M} \mu ( z ) | D \mathrm{u} ( z ) |^{q( z )} \,\, dv_{g} ( z ).$$
And since $ \mathcal{N}_{\lambda}^{+} \subset \mathcal{N}_{\lambda}, $ we get
\begin{align*}
\mathrm{ J}_{\lambda} ( \mathrm{u} ) &= \int_{M} \frac{1}{p( z )}\, | D \mathrm{u} ( z ) |^{p( z )} \,\, dv_{g} ( z ) +  \int_{M} \frac{\mu ( z )}{q( z )}\, | D \mathrm{u} ( z ) |^{q( z )} \,\, dv_{g} ( z ) \\& \hspace*{0.3cm} - \int_{M} \frac{g( z )}{1 - \gamma ( z )} \,| \mathrm{u} ( z ) |^{1 - \gamma ( z )} \,\, dv_{g} ( z ) - \lambda \int_{M} \frac{1}{r( z )}  \,| \mathrm{u} ( z ) |^{r( z )} \,\, dv_{g} ( z ) \\& \leq \frac{1}{p^{-}} \int_{M} | D \mathrm{u} ( z ) |^{p( z )} \,\, dv_{g} ( z ) + \frac{1}{q^{-}} \int_{M} \mu ( z ) \, | D \mathrm{u} ( z ) |^{q( z )} \,\, dv_{g} ( z )\\& \hspace*{0.3cm} - \frac{1}{1 - \gamma^{+}} \int_{M} g( z )\,| \mathrm{u} ( z ) |^{1 - \gamma( z )} \,\, dv_{g} ( z ) - \frac{\lambda}{r^{+}} \int_{M} | \mathrm{u} ( z ) |^{r( z )} \,\, dv_{g} ( z )\\& = \frac{1}{p^{-}} \int_{M} | D \mathrm{u} ( z ) |^{p( z )} \,\, dv_{g} ( z ) + \frac{1}{q^{-}} \int_{M} \mu( z ) \, | D \mathrm{u} ( z ) |^{q( z )} \,\, dv_{g} ( z )\\& \hspace*{0.3cm} - \frac{1}{1 - \gamma^{+}} \bigg[ \int_{M} | D \mathrm{u} ( z ) |^{p( z )} \,\, dv_{g} ( z ) + \int_{M} \mu( z ) \, |D \mathrm{u}( z )|^{q( z )} \,\, dv_{g} ( z ) - \lambda \int_{M} | \mathrm{u} ( z ) |^{r( z )} \,\, dv_{g} ( z ) \bigg]\\&\hspace*{0.3cm} - \frac{\lambda}{r^{+}} \int_{M} | \mathrm{u} ( z ) |^{r( z )} \,\, dv_{g} ( z ) \\& = \bigg( \frac{1}{p^{-}} - \frac{1}{1 - \gamma^{+}} \bigg) \, \int_{M} | D \mathrm{u} ( z )|^{p( z )} \,\, dv_{g} ( z ) + \bigg( \frac{1}{q^{-}} - \frac{1}{1 - \gamma^{+}} \bigg) \, \int_{M} \mu( z ) \, | D \mathrm{u} ( z ) |^{q( z )} \,\, dv_{g} ( z ) \\& \hspace*{0.3cm} + \lambda \, \bigg( \frac{1}{1 - \gamma^{+}} - \frac{1}{r^{+}} \bigg) \, \int_{M} | \mathrm{u} ( z ) |^{r( z )} \,\, dv_{g} ( z ) \\& \leq \bigg[ \bigg( \frac{1}{p^{-}} - \frac{1}{1 - \gamma^{+}} \bigg) + \bigg( \frac{1}{1 - \gamma^{+}} - \frac{1}{r^{+}}  \bigg) \frac{p^{+} + \gamma^{+} - 1}{r^{-} + \gamma^{-} - 1} \bigg] \int_{M} |D \mathrm{u} ( z ) |^{p( z )} \,\,dv_{g} ( z ) \\& \hspace*{0.3cm} + \bigg[ \bigg( \frac{1}{q^{-}} - \frac{1}{1 - \gamma^{+}} \bigg) + \bigg( \frac{1}{1 - \gamma^{+}} - \frac{1}{r^{+}} \bigg) \frac{q^{+} + \gamma^{+} - 1}{r^{-} + \gamma^{-} - 1}  \bigg] \int_{M} \mu ( z )\,| D \mathrm{u}( z )|^{q( z )} \,\, dv_{g} ( z ) \\& \leq \frac{p^{+} + \gamma^{+} - 1}{1 - \gamma^{-}} \bigg[ \frac{1}{r^{+}} - \frac{1}{p^{-}} \bigg] \int_{M} | D \mathrm{u} ( z ) |^{p( z )} \,\, dv_{g} ( z )\\& \hspace*{0.3cm} + \mu_{0}\, \frac{q^{+} + \gamma^{+} - 1}{1 - \gamma^{-}} \bigg[ \frac{1}{r^{+}} - \frac{1}{q^{-}} \bigg] \int_{M} | D \mathrm{u} ( z ) |^{q( z )} \,\,dv_{g} ( z ) \\& < 0 \hspace*{1cm} \mbox{Since $ q^{-} < p^{-} < r^{+}.$}
\end{align*}
Which implies that $$\mathrm{ J}_{\lambda} \bigg|_{\mathcal{N}_{\lambda}} < 0.$$
Hence, $$ \sigma_{\lambda}^{+} < 0.$$
\end{proof}
\begin{lemma}\label{lemma3}
Under assumptions (i)-(v), there exists $\lambda^{*} > 0$ such as $ \mathcal{N}_{\lambda}^{0} = \emptyset$ for all $ \lambda \in ( 0, \lambda^{*} ).$
\end{lemma}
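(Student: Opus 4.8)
The plan is to argue by contradiction. Suppose the statement fails; then there are a sequence $\lambda_{n}\downarrow 0$ and functions $\mathrm{u}_{n}\in\mathcal{N}_{\lambda_{n}}^{0}\subset\mathcal{N}_{\lambda_{n}}$. Write $A_{n}=\int_{M}|D\mathrm{u}_{n}|^{p(z)}\,dv_{g}$, $B_{n}=\int_{M}\mu(z)|D\mathrm{u}_{n}|^{q(z)}\,dv_{g}$, $C_{n}=\int_{M}g(z)|\mathrm{u}_{n}|^{1-\gamma(z)}\,dv_{g}$ and $D_{n}=\int_{M}|\mathrm{u}_{n}|^{r(z)}\,dv_{g}$, so that $\mathrm{u}_{n}\in\mathcal{N}_{\lambda_{n}}$ reads $A_{n}+B_{n}=C_{n}+\lambda_{n}D_{n}$, while $\mathrm{u}_{n}\in\mathcal{N}_{\lambda_{n}}^{0}$ is the companion identity defining $\mathcal{N}_{\lambda}^{0}$. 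By \eqref{1} and (iv), the three weights entering that identity are continuous on the compact set $\overline{M}$ and take values in a fixed compact subinterval of $(0,\infty)$, bounded below by $q^{-}+\gamma^{-}-1>0$ and above by $r^{+}+\gamma^{+}-1$.

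The first step is to squeeze out of these two identities two scalar inequalities. Bounding, in the $\mathcal{N}_{\lambda_{n}}^{0}$-identity, the left-hand side below by $(q^{-}+\gamma^{-}-1)(A_{n}+B_{n})$ and the right-hand side above by $\lambda_{n}(r^{+}+\gamma^{+}-1)D_{n}$, and then eliminating $\lambda_{n}D_{n}$ by means of $A_{n}+B_{n}=C_{n}+\lambda_{n}D_{n}$, one obtains $C_{n}\le\theta(A_{n}+B_{n})$ with $\theta:=1-\tfrac{q^{-}+\gamma^{-}-1}{r^{+}+\gamma^{+}-1}\in(0,1)$, hence $\lambda_{n}D_{n}=A_{n}+B_{n}-C_{n}\ge(1-\theta)(A_{n}+B_{n})$. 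Reversing the roles of the two sides (bounding the left-hand side above and the right-hand side below, then using $\mathcal{N}_{\lambda_{n}}$ once more) produces the companion estimate $\lambda_{n}D_{n}\le\theta'(A_{n}+B_{n})$, whence $C_{n}\ge(1-\theta')(A_{n}+B_{n})$.

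Next I would translate these into norm estimates for $\|\mathrm{u}_{n}\|:=\|\mathrm{u}_{n}\|_{W_{0}^{1,q(z)}(M)}$. Using Hölder's inequality, the compact embedding of Theorem \ref{theo3} (which, since $1-\gamma^{+}\le 1-\gamma^{-}<1<q^{-}$, also yields $W_{0}^{1,q(z)}(M)\hookrightarrow L^{1-\gamma^{-}}(M)$) and $g\in L^{\infty}(M)$, there are constants $c_{1},c_{2}>0$ such that $C_{n}\le c_{1}\max(\|\mathrm{u}_{n}\|^{1-\gamma^{-}},\|\mathrm{u}_{n}\|^{1-\gamma^{+}})$ and $D_{n}\le c_{2}\max(\|\mathrm{u}_{n}\|^{r^{-}},\|\mathrm{u}_{n}\|^{r^{+}})$; while Proposition \ref{prop7} and the Poincaré-type inequality used in the proof of Lemma \ref{lemma1}, together with $\mu(z)\ge\mu_{0}>0$, give $A_{n}+B_{n}\ge c_{3}\min(\|\mathrm{u}_{n}\|^{q^{-}},\|\mathrm{u}_{n}\|^{q^{+}})$ for some $c_{3}>0$. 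Inserting these into $\lambda_{n}D_{n}\ge(1-\theta)(A_{n}+B_{n})$ and using $r^{-}>q^{+}$ from \eqref{1} forces $\|\mathrm{u}_{n}\|\to\infty$; inserting them instead into $(1-\theta')(A_{n}+B_{n})\le C_{n}$ gives, for $n$ large, $(1-\theta')c_{3}\|\mathrm{u}_{n}\|^{q^{-}}\le c_{1}\|\mathrm{u}_{n}\|^{1-\gamma^{-}}$, i.e. $\|\mathrm{u}_{n}\|^{q^{-}+\gamma^{-}-1}$ stays bounded, so that $\|\mathrm{u}_{n}\|$ stays bounded because $q^{-}+\gamma^{-}-1>0$. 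This contradiction shows that $\mathcal{N}_{\lambda}^{0}=\emptyset$ whenever $\lambda\in(0,\lambda^{*})$, with $\lambda^{*}>0$ expressible in terms of $c_{1},c_{2},c_{3},\theta,\theta'$.

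The point I expect to require the most care is obtaining the absorption inequality $\lambda_{n}D_{n}\le\theta'(A_{n}+B_{n})$ with $\theta'<1$, i.e. controlling the parametric term $\lambda\int_{M}(r(z)+\gamma(z)-1)|\mathrm{u}|^{r(z)}\,dv_{g}$ by a fixed fraction of $\int_{M}(p(z)+\gamma(z)-1)|D\mathrm{u}|^{p(z)}\,dv_{g}+\int_{M}(q(z)+\gamma(z)-1)\mu(z)|D\mathrm{u}|^{q(z)}\,dv_{g}$ uniformly on $\mathcal{N}_{\lambda}$. This forces one to keep track of the variable exponents carefully and to use the strict pointwise ordering $q(z)<p(z)<r(z)$ on $\overline{M}$ granted by \eqref{1}, together with the compactness of $\overline{M}$ to make it uniform, rather than merely the extremal values $q^{\pm},p^{\pm},r^{\pm}$ — precisely the kind of bookkeeping already performed in the proof of Lemma \ref{lemma2}. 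Once this is in hand, the rest reduces to the modular and Sobolev estimates used in the proof of Lemma \ref{lemma1}.
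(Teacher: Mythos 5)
Your overall strategy is the same as the paper's: assume $\mathrm{u}_n \in \mathcal{N}_{\lambda_n}^{0}$ for a sequence $\lambda_n \downarrow 0$ and derive from the two membership identities both a uniform upper bound on $\|\mathrm{u}_n\|$ and a lower bound that blows up as $\lambda_n \to 0^{+}$. Your first scalar inequality, $\lambda_n D_n \ge \frac{q^{-}+\gamma^{-}-1}{r^{+}+\gamma^{+}-1}(A_n+B_n)$, is correct and is the rigorous version of the paper's display \eqref{3} together with the estimate following \eqref{said}; combined with the modular and embedding bounds it does force $\|\mathrm{u}_n\|\to\infty$.

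The gap is in the companion absorption inequality $\lambda_n D_n \le \theta'(A_n+B_n)$ with $\theta'<1$, which you yourself flag as the delicate point. Bounding the left-hand side of the $\mathcal{N}_{\lambda}^{0}$-identity from above and the right-hand side from below, the best uniform constant obtainable is $\theta'=\frac{\sup_M (p(z)+\gamma(z))-1}{\inf_M (r(z)+\gamma(z))-1}$, and hypotheses (i), (iv) do not force this to be $<1$: for instance $p\equiv 2$, $r\equiv 2.1$ and $\gamma$ ranging over $[0.1,\,0.9]$ give $\theta'=1.9/1.2>1$. Your proposed fix --- the strict pointwise ordering $q(z)<p(z)<r(z)$ plus compactness of $\overline{M}$ --- does not close this, because the weights $p(z)+\gamma(z)-1$ and $r(z)+\gamma(z)-1$ multiply \emph{different} integrands ($|D\mathrm{u}|^{p(z)}$ versus $|\mathrm{u}|^{r(z)}$), so the pointwise comparison cannot be carried out inside the integrals and only the extremal values survive. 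Hence $C_n\ge(1-\theta')(A_n+B_n)$, and with it the uniform bound on $\|\mathrm{u}_n\|$, is not established. To be fair, this is exactly where the paper's own proof is weakest: its displays \eqref{3}--\eqref{4} are written as exact equalities with extremal constant weights, which membership in $\mathcal{N}_{\lambda}^{0}$ and $\mathcal{N}_{\lambda}$ does not provide when the exponents are variable, and the subtraction producing \eqref{5} --- the step that yields $A+B\lesssim C$ and hence boundedness --- is only legitimate for constant exponents, where it gives $(r-p)A+(r-q)B=(r+\gamma-1)C$ with positive coefficients. You have therefore isolated the right crux, but your argument does not close it under (i)--(v) alone; an additional condition such as $r^{-}-p^{+}>\gamma^{+}-\gamma^{-}$ (automatic for constant $\gamma$) would make your $\theta'<1$ and complete the proof along your lines.
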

\begin{proof}
Suppose otherwise, that is $ \mathcal{N}_{\lambda}^{0} \neq \emptyset$ for all $ \lambda > 0,$ we can find $ \mathrm{u} \in \mathcal{N}_{\lambda}^{0}$ such as 
\begin{align} \label{3}
&( p^{+} + \gamma^{+} - 1) \int_{M} | D \mathrm{u} ( z )|^{p( z )} \,\, dv_{g} ( z ) + ( q^{+} + \gamma^{+} - 1) \int_{M} \mu ( z ) \, | D \mathrm{u} ( z ) |^{q( z )} \,\, dv_{g} ( z )\nonumber \\ & = \lambda ( r^{-} + \gamma^{-} - 1 ) \int_{M} | \mathrm{u} ( z ) |^{r( z )} \,\, dv_{g} ( z ),  
\end{align}
since $ \mathrm{u} \in \mathcal{N}_{\lambda},$ we also have
\begin{align} \label{4}
&( r^{+} + \gamma^{+} - 1) \int_{M} | D \mathrm{u} ( z ) |^{p( z )} \,\, dv_{g} ( z ) + ( r^{+} + \gamma^{+} - 1) \int_{M} \mu ( z ) | D \mathrm{u} ( z ) |^{q( z )} \,\, dv_{g} ( z )\nonumber \\& = ( r^{-} + \gamma^{-} - 1) \int_{M} g( z ) | \mathrm{u} ( z ) |^{1 - \gamma ( z )} \,\, dv_{g} ( z ) + \lambda ( r^{-} + \gamma^{-} - 1) \int_{M} | \mathrm{u} ( z ) |^{r( z )} \,\,dv_{g} ( z ).
\end{align}
Subtracting \eqref{3} from \eqref{4} yields
\begin{align}\label{5}
&( p^{+} - r^{+} ) \int_{M} | D \mathrm{u} ( z ) |^{p( z )} \,\, dv_{g} ( z ) + ( q^{+} - r^{+} ) \int_{M} \mu ( z ) | D \mathrm{u} ( z ) |^{q( z )} \,\, dv_{g} ( z ) \nonumber \\& = ( r^{-} + \gamma^{-} - 1) \int_{M} g( z ) | \mathrm{u} ( z ) |^{1 - \gamma ( z )} \,\,dv_{g} ( z ).
\end{align}
According to Theorems 13.17 of Hewitt-Stromberg [\cite{hewitt2013real}, p. 196], Theorems \ref{theo2}, \ref{theo3} and propositions \ref{prop5}, \ref{prop7}, we deduce from \eqref{5} that
 $$ \min \{ || \mathrm{u} ||^{p^{+}}, \, || \mathrm{u} ||^{q^{+}} \, \} \leq c_{3} \, || \mathrm{u} ||^{1 - \gamma^{-}} \,\, \mbox{for some} \,\, c_{3} > 0. \,\, \mbox{Since} \,\, 1 - \gamma^{-} < q^{+} < p^{+} < r^{-}.$$
Thus, 
\begin{equation}\label{said}
|| \mathrm{u} || \leq c_{4} \,\, \mbox{for some} \,\, c_{4} > 0.
\end{equation}
From \eqref{3} and Theorems \ref{theo2}, we have 
$$ || \mathrm{u} ||^{r^{-}} \geq \frac{1}{\lambda \, c_{5}} \, \min \{ || \mathrm{u} ||^{p^{+}}, \, || \mathrm{u} ||^{q^{+}} \, \}  \,\,\mbox{for some} \,\, c_{5} > 0,$$
which implies that $$ || \mathrm{u} || \geq \bigg( \frac{1}{\lambda \, c_{5}} \bigg)^{\frac{1}{r^{-} - p^{+}}} \hspace*{1cm} \mbox{or} \hspace*{1cm} || \mathrm{u} || \geq \bigg( \frac{1}{\lambda \, c_{5}} \bigg)^{\frac{1}{r^{-} - q^{+}}}.$$
If $ \lambda \rightarrow 0^{+} $ due to $ q^{+} < p^{+} < r^{-},$ then $|| \mathrm{u} || \longrightarrow + \infty.$ Which contradicts \eqref{said}.
\end{proof}
\begin{lemma}\label{lemma4}
Under assumptions (i)-(v), there exists $\tilde{\lambda}^{*} \in (0, \lambda^{*} ] $ such that $ \mathcal{N}_{\lambda}^{\pm} \neq \emptyset$ for all $\lambda \in ( 0, \tilde{\lambda}^{*} ).$ In addition, for all $ \lambda \in ( 0, \tilde{\lambda}^{*} ),$ there exists $ \tilde{\mathrm{u}}^{*} \in \mathcal{N}_{\lambda}^{+} $ such that $ \mathrm{ J}_{\lambda} ( \tilde{\mathrm{u}}^{*} ) = \sigma_{\lambda}^{+} < 0 $ and $ \tilde{\mathrm{u}}^{*} ( z ) \geq 0 $ for a.a $ z \in M.$
\end{lemma}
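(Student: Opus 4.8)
The argument is the classical Nehari--fibering scheme, adapted to the variable-exponent, non-$C^{1}$ setting. For $\mathrm{u}\in W_{0}^{1,q(z)}(M)\setminus\{0\}$ and $t>0$ put $\phi_{\mathrm{u}}(t)=\mathrm{J}_{\lambda}(t\mathrm{u})$ and $\xi_{\mathrm{u}}(t)=t\,\phi_{\mathrm{u}}'(t)$, so that
\[
\xi_{\mathrm{u}}(t)=\int_{M}t^{p(z)}|D\mathrm{u}|^{p(z)}dv_{g}+\int_{M}\mu(z)\,t^{q(z)}|D\mathrm{u}|^{q(z)}dv_{g}-\int_{M}t^{1-\gamma(z)}g\,|\mathrm{u}|^{1-\gamma(z)}dv_{g}-\lambda\int_{M}t^{r(z)}|\mathrm{u}|^{r(z)}dv_{g}.
\]
Then $t\mathrm{u}\in\mathcal{N}_{\lambda}$ exactly when $\xi_{\mathrm{u}}(t)=0$, and, as in the constant-exponent case, the sign of $\xi_{\mathrm{u}}'(t)$ at such a root decides whether $t\mathrm{u}$ belongs to $\mathcal{N}_{\lambda}^{+}$, $\mathcal{N}_{\lambda}^{0}$ or $\mathcal{N}_{\lambda}^{-}$; this is precisely the second-order computation that produced the three sets. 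Since $1-\gamma^{+}<1<q^{-}$, the singular term dominates near the origin, so $\phi_{\mathrm{u}}(0^{+})=0$ and $\lim_{t\to0^{+}}\phi_{\mathrm{u}}'(t)=-\infty$ whenever $\int_{M}g\,|\mathrm{u}|^{1-\gamma(z)}dv_{g}>0$; since $r^{-}>p^{+}$, condition \eqref{1} together with Proposition \ref{prop7} shows the parametric term dominates at infinity, so $\phi_{\mathrm{u}}(t)\to-\infty$ as $t\to+\infty$.

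The first step is to produce $\mathcal{N}_{\lambda}^{\pm}$. Fix a test function $\mathrm{u}_{0}\geq0$, $\mathrm{u}_{0}\not\equiv0$, with $\int_{M}g\,\mathrm{u}_{0}^{1-\gamma(z)}dv_{g}>0$, which is possible because $g\geq0$ and $g\not\equiv0$. Write $\xi_{\mathrm{u}_{0}}(t)=\Theta(t)-\lambda\,\Xi(t)$, where $\Theta$ gathers the gradient and singular terms and $\Xi(t)=\int_{M}t^{r(z)}|\mathrm{u}_{0}|^{r(z)}dv_{g}$. Using Proposition \ref{prop7} to sandwich $t^{p(z)},t^{q(z)},t^{1-\gamma(z)}$ between the extremal powers $t^{(\cdot)^{-}}$ and $t^{(\cdot)^{+}}$, one checks that $\Theta$ is negative for small $t$ but becomes and remains positive on some interval because the $p(z)$- and $q(z)$-terms eventually outgrow the singular one, while $\Xi$ increases strictly to $+\infty$. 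Hence there is a threshold $\tilde{\lambda}^{*}$, which we may take $\leq\lambda^{*}$, such that for $\lambda\in(0,\tilde{\lambda}^{*})$ the equation $\xi_{\mathrm{u}_{0}}(t)=0$ has at least two roots $0<t_{1}<t_{2}$ with $\xi_{\mathrm{u}_{0}}'(t_{1})>0$ and $\xi_{\mathrm{u}_{0}}'(t_{2})<0$. By Lemma \ref{lemma3}, $\mathcal{N}_{\lambda}^{0}=\emptyset$ for such $\lambda$, so $\mathcal{N}_{\lambda}=\mathcal{N}_{\lambda}^{+}\cup\mathcal{N}_{\lambda}^{-}$ and we conclude $t_{1}\mathrm{u}_{0}\in\mathcal{N}_{\lambda}^{+}$, $t_{2}\mathrm{u}_{0}\in\mathcal{N}_{\lambda}^{-}$; in particular $\mathcal{N}_{\lambda}^{\pm}\neq\emptyset$.

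Now fix $\lambda\in(0,\tilde{\lambda}^{*})$. Since $\mathcal{N}_{\lambda}^{+}\neq\emptyset$, Lemma \ref{lemma2} gives $\sigma_{\lambda}^{+}=\inf_{\mathcal{N}_{\lambda}}\mathrm{J}_{\lambda}<0$, and by Lemma \ref{lemma1} this infimum is finite. Pick a minimizing sequence $(\mathrm{u}_{n})\subset\mathcal{N}_{\lambda}^{+}$; replacing $\mathrm{u}_{n}$ by $|\mathrm{u}_{n}|$, which changes neither $\mathrm{J}_{\lambda}$ nor the Nehari identity, we may assume $\mathrm{u}_{n}\geq0$. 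Coercivity makes $(\mathrm{u}_{n})$ bounded in the reflexive space $W_{0}^{1,q(z)}(M)$, so along a subsequence $\mathrm{u}_{n}\rightharpoonup\tilde{\mathrm{u}}^{*}\geq0$; by the compact embeddings of Theorems \ref{theo2}--\ref{theo3}, $\mathrm{u}_{n}\to\tilde{\mathrm{u}}^{*}$ in $L^{r(z)}(M)$, and since $g\in L^{\infty}(M)$ and $1-\gamma(z)$ is subcritical, $\int_{M}g\,|\mathrm{u}_{n}|^{1-\gamma(z)}dv_{g}\to\int_{M}g\,|\tilde{\mathrm{u}}^{*}|^{1-\gamma(z)}dv_{g}$ and $\int_{M}|\mathrm{u}_{n}|^{r(z)}dv_{g}\to\int_{M}|\tilde{\mathrm{u}}^{*}|^{r(z)}dv_{g}$. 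Weak lower semicontinuity of the convex gradient modulars then gives $\mathrm{J}_{\lambda}(\tilde{\mathrm{u}}^{*})\leq\liminf_{n}\mathrm{J}_{\lambda}(\mathrm{u}_{n})=\sigma_{\lambda}^{+}<0=\mathrm{J}_{\lambda}(0)$, so $\tilde{\mathrm{u}}^{*}\not\equiv0$, and (since $\mathrm{J}_{\lambda}(\tilde{\mathrm{u}}^{*})<0$) the estimate used in Lemma \ref{lemma2} forces $\int_{M}g\,|\tilde{\mathrm{u}}^{*}|^{1-\gamma(z)}dv_{g}>0$. To conclude, passing to the limit in the Nehari identity for $\mathrm{u}_{n}$ and using the lower semicontinuity above yields $\xi_{\tilde{\mathrm{u}}^{*}}(1)\leq0$; since $\lim_{t\to0^{+}}\phi_{\tilde{\mathrm{u}}^{*}}'(t)=-\infty$ and $\phi_{\tilde{\mathrm{u}}^{*}}$ obeys the two-critical-point picture, there is $t^{*}\in(0,1]$ with $t^{*}\tilde{\mathrm{u}}^{*}\in\mathcal{N}_{\lambda}$ and $\mathrm{J}_{\lambda}(t^{*}\tilde{\mathrm{u}}^{*})\leq\mathrm{J}_{\lambda}(\tilde{\mathrm{u}}^{*})\leq\sigma_{\lambda}^{+}$; minimality forces $t^{*}=1$, so $\tilde{\mathrm{u}}^{*}\in\mathcal{N}_{\lambda}$ and $\mathrm{J}_{\lambda}(\tilde{\mathrm{u}}^{*})=\sigma_{\lambda}^{+}$. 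Finally $\tilde{\mathrm{u}}^{*}\notin\mathcal{N}_{\lambda}^{-}$, since on $\mathcal{N}_{\lambda}^{-}$ the fibering map has a strictly smaller local minimum at some $t_{1}<1$, contradicting minimality, and $\mathcal{N}_{\lambda}^{0}=\emptyset$ by Lemma \ref{lemma3}; hence $\tilde{\mathrm{u}}^{*}\in\mathcal{N}_{\lambda}^{+}$, as required.

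\textbf{Main obstacle.} The homogeneity that, for constant exponents, turns $\phi_{\mathrm{u}}$ into a finite combination of monomials in $t$ is lost here, so the technical heart of the argument is the second paragraph: locating two roots of $\xi_{\mathrm{u}_{0}}$ with the correct signs of $\xi_{\mathrm{u}_{0}}'$ and extracting an explicit $\tilde{\lambda}^{*}$, which requires repeated use of Proposition \ref{prop7} and of \eqref{1}--\eqref{2} to control $t^{p(z)},t^{q(z)},t^{1-\gamma(z)},t^{r(z)}$ by powers with the extremal exponents. A secondary point is that $\mathrm{J}_{\lambda}$ is not $C^{1}$, so every manipulation with $\langle\mathrm{J}_{\lambda}'(\mathrm{u}),\mathrm{u}\rangle$ must be read off the fibering map $\phi_{\mathrm{u}}$ and justified through the integrability of $\mathrm{u}^{-\gamma(z)}\varphi$ recorded after the definition of weak solution, rather than by differentiation.
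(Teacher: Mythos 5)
Your overall strategy coincides with the paper's: fibering maps to produce points of $\mathcal{N}_{\lambda}^{\pm}$ for small $\lambda$, then the direct method on a minimizing sequence in $\mathcal{N}_{\lambda}^{+}$, with Lemma \ref{lemma3} ruling out $\mathcal{N}_{\lambda}^{0}$. But the step you defer with ``one checks'' is precisely where the paper does its real work, and as written your argument has a gap there. The paper introduces the auxiliary function $\zeta_{\mathrm{u}}(t)=t^{p^{+}-r^{-}}\int_{M}|D\mathrm{u}|^{p(z)}dv_{g}-t^{-r^{-}-\gamma^{-}+1}\int_{M}g|\mathrm{u}|^{1-\gamma(z)}dv_{g}$, computes its maximizer $\tilde{t}_{0}$ and the value $\zeta_{\mathrm{u}}(\tilde{t}_{0})$ in closed form (equation \eqref{6}), and then uses the Sobolev bounds \eqref{7}--\eqref{8} to get $\zeta_{\mathrm{u}}(\tilde{t}_{0})-\lambda\int_{M}|\mathrm{u}|^{r(z)}dv_{g}\geq(c_{6}-\lambda c_{7})\|\mathrm{u}\|^{r^{+}}>0$, which yields a threshold $\tilde{\lambda}^{*}$ \emph{independent of} $\mathrm{u}$. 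That uniformity is not cosmetic: the relations \eqref{10}--\eqref{11} are invoked again for $\tilde{\mathrm{u}}^{*}$ inside the compactness argument and once more in Lemma \ref{lemma8} for $\tilde{v}^{*}$, so a threshold extracted from a single test function $\mathrm{u}_{0}$ gives nonemptiness but does not support the rest of the proof. Relatedly, your claim that the sign of $\xi_{\mathrm{u}}'$ at a root of $\xi_{\mathrm{u}}$ decides membership in $\mathcal{N}_{\lambda}^{\pm}$ is immediate only for constant exponents; the sets $\mathcal{N}_{\lambda}^{\pm}$ carry the variable weights $p(z)-\gamma(z)-1$, $q(z)+\gamma(z)-1$, $r(z)+\gamma(z)-1$, and matching $\psi_{\mathrm{u}}''(t_{1})\gtrless 0$ with those inequalities is exactly the computation \eqref{13}--\eqref{166}, which needs to be carried out (and which, to be fair, the paper itself only performs after replacing $t^{p(z)}$ by the extremal powers).

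Your handling of the minimizing sequence, on the other hand, is a genuinely different and arguably cleaner route than the paper's. You pass to the limit in the Nehari identity via weak lower semicontinuity to get $\xi_{\tilde{\mathrm{u}}^{*}}(1)\leq 0$ and then project back onto the Nehari manifold along the fiber; the paper instead assumes $\mathrm{u}_{m}\not\to\tilde{\mathrm{u}}^{*}$ strongly and derives a contradiction through $\liminf\psi_{\mathrm{u}_{m}}'(t_{1})>\psi_{\tilde{\mathrm{u}}^{*}}'(t_{1})$ in \eqref{19}--\eqref{22}, thereby obtaining strong convergence. Both reach $\mathrm{J}_{\lambda}(\tilde{\mathrm{u}}^{*})=\sigma_{\lambda}^{+}$; note only that in your version the sandwich $\sigma_{\lambda}^{+}\leq\mathrm{J}_{\lambda}(t^{*}\tilde{\mathrm{u}}^{*})\leq\mathrm{J}_{\lambda}(\tilde{\mathrm{u}}^{*})\leq\sigma_{\lambda}^{+}$ literally shows that $t^{*}\tilde{\mathrm{u}}^{*}$ attains the infimum rather than forcing $t^{*}=1$, which is harmless after relabelling but should be stated that way.
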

\begin{proof}
Let $ \mathrm{u} \in W_{0}^{1, q( z )} ( M ) \backslash \{ 0\}$ and consider the function $\zeta_{\mathrm{u}} : ( 0, + \infty ) \rightarrow \mathbb{R} $ defined by $$ \zeta_{\mathrm{u}} ( t ) = t^{p^{+} - r^{-}} \int_{M} | D \mathrm{u} ( z ) |^{p( z )} \,\, dv_{g} ( z ) - t^{-r^{-} - \gamma^{-} + 1} \int_{M} g( z ) | \mathrm{u} ( z ) |^{1 - \gamma ( z )} \,\, dv_{g} ( z ).$$
Since $ r^{-} - p^{+} < r^{-} + \gamma^{-} - 1 $ we can find $ \tilde{t}_{0} > 0$ such as $$ \zeta_{\mathrm{u}} ( \tilde{t}_{0} ) = \max_{t > 0} \zeta_{\mathrm{u}} ( t ).$$
Then, we have 
\begin{align*}
\zeta'_{\mathrm{u}} ( \tilde{t}_{0} ) = 0 & \Rightarrow ( p^{+} - r^{-} ) \, \tilde{t}_{0}^{p^{+} - r^{-} - 1} \int_{M} | D \mathrm{u} ( z ) |^{p( z )} \,\, dv_{g} ( x ) \\& \hspace*{0.3cm} + ( r^{-} + \gamma^{-} - 1 ) \, \tilde{t}_{0}^{-r^{-} - \gamma^{-}} \int_{M} g( z )| \mathrm{u} ( z )|^{1 - \gamma ( z )} \,\, dv_{g} ( z ) = 0 \\& \Rightarrow \tilde{t}_{0} = \bigg( \frac{( r^{-} + \gamma^{-} - 1 ) \, \displaystyle \int_{M} g( z ) | \mathrm{u} ( z ) |^{1 - \gamma ( z )} \,\, dv_{g} ( z )}{( r^{-} - p^{+} ) \, \displaystyle \int_{M} | D \mathrm{u} ( z ) |^{p( z )} \,\, dv_{g} ( z )} \bigg)^{\frac{1}{p^{+} + \gamma^{-} - 1}}.
\end{align*}
Furthermore, we have
\begin{align} \label{6}
\zeta_{\mathrm{u}} ( \tilde{t}_{0} ) &= \frac{\bigg[ ( r^{-} - p^{+} ) \, \displaystyle\int_{M} | D \mathrm{u} ( z ) |^{p( z )} \,\, dv_{g} ( z ) \bigg]^{\frac{r^{-} - p^{+}}{p^{+} + \gamma^{-} - 1}}}{\bigg[ ( r^{-} + \gamma^{-} - 1 ) \displaystyle\int_{M} g( z ) | \mathrm{u} ( z ) |^{1 - \gamma ( z )} \,\, dv_{g} ( z ) \bigg]^{\frac{r^{-} - p^{+}}{p^{+} + \gamma^{-} - 1}}} \int_{M} | D \mathrm{u} ( z ) |^{p( z )} \,\,dv_{g} ( z ) \nonumber \\& \hspace*{0.1cm} -\frac{\bigg[ ( r^{-} - p^{+} ) \, \displaystyle\int_{M} | D \mathrm{u} ( z ) |^{p( z )} \,\, dv_{g} ( z ) \bigg]^{\frac{r^{-} + \gamma^{-} - 1}{p^{+} + \gamma^{-} - 1}}}{\bigg[ ( r^{-} + \gamma^{-} - 1 ) \displaystyle \int_{M} g( z ) | \mathrm{u} ( z ) |^{1 - \gamma ( z )} \,\, dv_{g} ( z ) \bigg]^{\frac{r^{-} + \gamma^{-} - 1}{p^{+} + \gamma^{-} - 1}}} \int_{M} g( z ) | \mathrm{u} ( z ) |^{1 - \gamma ( z )} \,\, dv_{g} ( z ) \nonumber \\& = \frac{( r^{-} - p^{+} )^{\frac{r^{-} - p^{+}}{p^{+} + \gamma^{-} - 1}} \, \bigg( \displaystyle \int_{M} | D \mathrm{u} ( z ) |^{p( z )} \,\, dv_{g} ( z ) \bigg)^{\frac{r^{-} + \gamma^{-} - 1}{p^{+} + \gamma^{-} - 1}}}{( r^{-} + \gamma^{-} - 1 )^{\frac{r^{-} - p^{+}}{p^{+} + \gamma^{-} - 1}} \, \bigg( \displaystyle \int_{M} g( z ) | \mathrm{u} ( z ) |^{1 - \gamma ( z )} \,\, dv_{g} ( z ) \bigg)^{\frac{r^{-} - p^{+}}{p^{+} + \gamma^{-} - 1}}} \nonumber \\& - \frac{( r^{-} - p^{+} )^{\frac{r^{-} + \gamma^{-} - 1}{p^{+} + \gamma^{-} - 1}} \, \bigg( \displaystyle \int_{M} | D \mathrm{u} ( z ) |^{p( z )} \,\, dv_{g} ( z ) \bigg)^{\frac{r^{-} + \gamma^{-} - 1}{p^{+} + \gamma^{-} - 1}}}{( r^{-} + \gamma^{-} - 1 )^{\frac{r^{-} + \gamma^{-} - 1}{p^{+} + \gamma^{-} - 1}} \, \bigg( \displaystyle \int_{M} g( z ) | \mathrm{u} ( z ) |^{1 - \gamma ( z )} \bigg)^{\frac{r^{-} - p^{+}}{p^{+} + \gamma^{-} - 1}}} \nonumber \\& = \bigg[ \frac{r^{-} + \gamma^{-} - 1}{r^{-} - p^{+}} \,.\, \frac{(r^{-} - p^{+})^{\frac{r^{-} + \gamma^{-} - 1}{p^{+} + \gamma^{-} - 1}}}{( r^{-} + \gamma^{-} - 1)^{\frac{r^{-} + \gamma^{-} - 1}{p^{+} + \gamma^{-} - 1}}} - \frac{(r^{-} - p^{+})^{\frac{r^{-} + \gamma^{-} - 1}{p^{+} + \gamma^{-} - 1}}}{( r^{-} + \gamma^{-} - 1)^{\frac{r^{-} + \gamma^{-} - 1}{p^{+} + \gamma^{-} - 1}}} \, \bigg] \nonumber \\& \hspace*{0.3cm} \times \frac{\bigg( \displaystyle \int_{M} | D \mathrm{u} ( z ) |^{p( z )} \,\, dv_{g} ( z ) \bigg)^{\frac{r^{-} + \gamma^{-} - 1}{p^{+} + \gamma^{-} - 1}}}{\bigg( \displaystyle \int_{M} g( z ) | \mathrm{u} ( z ) |^{1 - \gamma( z )} \,\, dv_{g} ( z ) \bigg)^{\frac{r^{-} - p^{+}}{p^{+} + \gamma^{-} - 1}}} \nonumber \\& = \frac{p^{+} + \gamma^{-} - 1}{r^{-} - p^{+}} \, \bigg( \frac{r^{-} - p^{+}}{r^{-} + \gamma^{-} - 1} \bigg)^{\frac{r^{-} + \gamma^{-} - 1}{p^{+} + \gamma^{-} - 1}} \, \frac{\bigg( \displaystyle \int_{M} | D \mathrm{u} ( z ) |^{p( z )} \,\, dv_{g} ( z ) \bigg)^{\frac{r^{-} + \gamma^{-} - 1}{p^{+} + \gamma^{-} - 1}}}{\bigg( \displaystyle \int_{M} g( z ) | \mathrm{u} ( z ) |^{1 - \gamma( z )} \,\, dv_{g} ( z ) \bigg)^{\frac{r^{-} - p^{+}}{p^{+} + \gamma^{-} - 1}}}.
\end{align}
According to Proposition \ref{prop7} and if $G$ denotes the best Sobolev constant, we have
\begin{equation}\label{7}
G \, || \mathrm{u} ||^{p^{+}} \leq \int_{M} | D \mathrm{u} ( z ) |^{p( z )} \,\, dv_{g} ( z ).
\end{equation}
Moreover, we have
\begin{equation}\label{8}
\int_{M} g( z ) | \mathrm{u} ( z ) |^{1 - \gamma ( z )} \,\, dv_{g} ( z ) \leq c_{6} \, || \mathrm{u} ||^{1 - \gamma^{-}} \,\, \mbox{for some} \,\, c_{6} > 0.
\end{equation}
According to \eqref{6}, \eqref{7} and \eqref{8} we have
\begin{align*}
&\zeta_{\mathrm{u}} ( \tilde{t}_{0} ) - \lambda \int_{M} | \mathrm{u} ( z ) |^{r( z )} \,\, dv_{g} ( z )\\& = \frac{p^{+} + \gamma^{-} - 1}{r^{-} - p^{+}} \, \bigg( \frac{r^{-} - p^{+}}{r^{-} + \gamma^{-} - 1} \bigg)^{\frac{r^{-} + \gamma^{-} - 1}{p^{+} + \gamma^{-} - 1}} \, \frac{\bigg( \displaystyle \int_{M} | D \mathrm{u} ( z ) |^{p( z )} \,\, dv_{g} ( z ) \bigg)^{\frac{r^{-} + \gamma^{-} - 1}{p^{+} + \gamma^{-} - 1}}}{\bigg( \displaystyle \int_{M} g( z ) | \mathrm{u} ( z ) |^{1 - \gamma( z )} \,\, dv_{g} ( z ) \bigg)^{\frac{r^{-} - p^{+}}{p^{+} + \gamma^{-} - 1}}}\\& \hspace*{0.3cm} - \lambda \, \int_{M} | \mathrm{u} ( z ) |^{r( z )} \,\, dv_{g} ( z ) \\& \geq \frac{p^{+} + \gamma^{-} - 1}{r^{-} - p^{+}} \, \bigg( \frac{r^{-} - p^{+}}{r^{-} + \gamma^{-} - 1} \bigg)^{\frac{r^{-} + \gamma^{-} - 1}{p^{+} + \gamma^{-} - 1}} \, \frac{G^{\frac{r^{-} + \gamma^{-} - 1}{p^{+} + \gamma^{-} - 1}} || u ||^{\frac{p^{+} ( r^{-} + \gamma^{-} - 1 )}{p^{+} + \gamma^{-} - 1}}}{\bigg( c_{6} || \mathrm{u} ||^{1 - \gamma^{-}} \bigg)^{\frac{r^{-} - p^{+}}{p^{+} + \gamma^{-} - 1}}} \\& \hspace*{0.3cm} + \lambda \, c_{7} \, || \mathrm{u} ||^{r^{+}} \\& \geq ( c_{6} - \lambda c_{7} ) \, || \mathrm{u} ||^{r^{+}} \,\,\, \mbox{for some} \,\,\, c_{6}, \, c_{7} > 0.
\end{align*}
Hence, there exists $ \tilde{\lambda}^{*} \in (0, \lambda^{*}]$ independent of $\mathrm{u}$ such as 
\begin{equation}\label{9}
\zeta_{\mathrm{u}} ( \tilde{t}_{0} ) - \lambda \int_{M} | \mathrm{u} ( z ) |^{r( z )} \,\, dv_{g} ( z ) > 0 \,\,\,\mbox{for all} \,\,\,\, \lambda \in (0, \tilde{\lambda}^{*}].
\end{equation}
Now, for $ \mathrm{u} \in W_{0}^{1, q( z )} ( M )$ we consider the function $ \tilde{\zeta}_{z} : ( 0, + \infty ) \rightarrow \mathbb{R} $ defined by 
\begin{align*}
\tilde{\zeta}_{\mathrm{u}} ( t ) = & \, t^{p^{+} - r^{-}} \int_{M} | D \mathrm{u} ( z ) |^{p( z )} \,\, dv_{g} ( z ) + t^{q^{+} - r^{-}} \int_{M} \mu ( z ) \, | D \mathrm{u} ( z ) |^{q( z )} \,\, dv_{g} ( z ) \\& - t^{-r^{-} - \gamma^{-} + 1} \int_{M} g( z ) | \mathrm{u}( z ) |^{1 - \gamma ( z )} \,\, dv_{g} ( z ) \,\,\, \mbox{for all} \,\,\, t > 0.
\end{align*}
According to \eqref{1}, we get that $$ r^{-} - p^{+} < r^{-} - q^{+} < r^{-} + \gamma^{-} - 1.$$
Thus, we can find $ t_{0} > 0$ such as $$ \tilde{\zeta}_{\mathrm{u}} ( t_{0} ) = \max_{t > 0} \tilde{\zeta}_{\mathrm{u}} ( t ).$$
Obviously, we have $ \tilde{\zeta}_{\mathrm{u}} \geq \zeta_{\mathrm{u}} $ and by \eqref{9} we find the existence of $ \tilde{\lambda}^{*} \in (0, \lambda^{*}]$ independent of $u$ such as 
$$ \tilde{\zeta}_{\mathrm{u}} ( t_{0} ) - \lambda \int_{M} | \mathrm{u} ( z ) |^{r( z )} \,\, dv_{g} ( z ) > 0 \,\,\, \mbox{for all} \,\,\, \lambda \in (0, \tilde{\lambda}^{*}].$$
Therefore, there exists $ t_{1} < t_{0} < t_{2}$ such that
\begin{equation}\label{10}
\tilde{\zeta}_{\mathrm{u}} ( t_{1} ) = \lambda \int_{M} | \mathrm{u} ( z ) |^{r( z )} \,\, dv_{g} ( z ) = \tilde{\zeta}_{\mathrm{u}} ( t_{2} ),
\end{equation}
and 
\begin{equation}\label{11}
\tilde{\zeta'}_{\mathrm{u}} ( t_{2} ) < 0 <  \tilde{\zeta'}_{\mathrm{u}} ( t_{1} ),
\end{equation}
where 
\begin{align}\label{12}
\tilde{\zeta'}_{\mathrm{u}} ( t ) &= ( p^{+} - r^{-} ) \,  t^{p^{+} - r^{-} - 1} \int_{M} | D \mathrm{u} ( z ) |^{p( z )} \,\, dv_{g} ( z ) \nonumber \\& \hspace*{0.3cm}+ ( q^{+} - r^{-} ) \, t^{q^{+} - r^{-} - 1} \int_{M} \mu ( z ) \, | D \mathrm{u} ( z ) |^{q( z )} \,\, dv_{g} ( z ) \nonumber \\& \hspace*{0.3cm}- ( - r^{-} - \gamma^{-} + 1 ) t^{-r^{-} - \gamma^{-}} \int_{M} g( z ) | \mathrm{u} ( z ) |^{1 - \gamma ( z )} \,\, dv_{g} ( z ).
\end{align}
Now, we consider the fibering function $\psi_{\mathrm{u}} : [0, + \infty ) \rightarrow \mathbb{R}$ defined by $ \psi_{\mathrm{u}} ( t ) = \mathrm{ J}_{\lambda} ( t \mathrm{u} )$ for all $ t \geq 0.$\\
Since $ \psi_{\mathrm{u}} ( t ) \in C^{2} ( (0, \infty ) ).$ We deduce that
\begin{align*}
\psi'_{\mathrm{u}} ( t_{1} ) =& \, t^{p^{+} - 1}_{1} \int_{M} | D \mathrm{u} ( z ) |^{p( z )} \,\,dv_{g} ( z ) + t_{1}^{q^{+} - 1} \int_{M} \mu ( z )\,| D \mathrm{u} ( z ) |^{q( z )} \,\, dv_{g} ( z ) \\&- t_{1}^{- \gamma^{-}} \int_{M} g( z ) | \mathrm{u} ( z ) |^{1 - \gamma ( z )} \,\, dv_{g} ( z ) - \lambda \, t_{1}^{r^{-} - 1} \int_{M} | \mathrm{u} ( z ) |^{r( z )} \,\,dv_{g} ( z ),
\end{align*}
 and
\begin{align}\label{13}
\psi''_{\mathrm{u}} ( t_{1} ) =& ( p^{+} - 1 ) \, t_{1}^{p^{+} -2} \int_{M} | D \mathrm{u} ( z ) |^{p( z )} \,\, dv_{g} ( z ) + ( q^{+} - 1 )\, t_{1}^{q^{+} - 2} \int_{M} \mu ( z )\,| D \mathrm{u} ( z ) |^{q( z )} \,\, dv_{g} ( z ) \nonumber \\&+ \gamma^{-} \, t^{- \gamma^{-} - 1}_{1} \int_{M} g( z ) | \mathrm{u} ( z ) |^{1 - \gamma ( z )} \,\, dv_{g} ( z ) - \lambda ( r^{-} - 1 ) \, t_{1}^{r^{-} - 2} \int_{M} | \mathrm{u} ( z ) |^{r( z )} \,\, dv_{g} ( z ).
 \end{align}
 According to \eqref{10} and \eqref{11}, we get
 \begin{align*}
& t_{1}^{p^{+} - r^{-}} \int_{M} | D \mathrm{u} ( z ) |^{p( z )} \,\, dv_{g} ( z ) + t_{1}^{q^{+} - r^{-}} \int_{M} \mu ( z ) \,| D \mathrm{u} ( z ) |^{q( z )} \,\, dv_{g} ( z )\\& - t_{1}^{-r^{-} - \gamma^{-} + 1} \int_{M} g( z ) | \mathrm{u} ( z ) |^{1 - \gamma ( z )} \,\, dv_{g} ( z ) = \lambda \int_{M} | \mathrm{u} ( z )|^{r( z )} \,\, dv_{g} ( z ), 
 \end{align*}
which implies by multiplying with $ \gamma^{-} t_{1}^{r^{-} - 2} $ and $ - ( r^{-} - 1 ) \, t_{1}^{r^{-} - 2},$ respectively, that
\begin{align}\label{14}
&\gamma^{-} t_{1}^{p^{+} - 2} \int_{M} | D \mathrm{u} ( z ) |^{p( z )} \,\, dv_{g} ( z ) + \gamma^{-} t_{1}^{q^{+} - 2} \int_{M} \mu ( z ) \,| D \mathrm{u} ( z ) |^{q( z )} \,\, dv_{g} ( z )\nonumber \\& - \gamma^{-} \lambda t_{1}^{r^{-} - 2} \int_{M} | \mathrm{u} ( z ) |^{r( z )} \,\, dv_{g} ( z ) = \gamma^{-} t_{1}^{- \gamma^{-} - 1} \int_{M} g( z ) | \mathrm{u} ( z ) |^{1 - \gamma ( z )} \,\, dv_{g} ( z ),
\end{align}
and 
\begin{align}\label{15}
&- ( r^{-} - 1 ) t_{1}^{p^{+} - 2} \int_{M} | D \mathrm{u} ( z ) |^{p( z )} \,\, dv_{g} ( z ) - ( r^{-} - 1 ) t_{1}^{q^{+} - 2} \int_{M} \mu ( z ) \,| D \mathrm{u} ( z ) |^{q( z )} \,\, dv_{g} ( z ) \nonumber \\&+ ( r^{-} - 1 ) t_{1}^{- \gamma^{-} - 1} \int_{M} g( z ) | \mathrm{u} ( z ) |^{1 - \gamma ( z )} \,\, dv_{g} ( z ) = -\lambda ( r^{-} - 1 ) t_{1}^{r^{-} - 2} \int_{M} | \mathrm{u} ( z ) |^{r( z )} \,\, dv_{g} ( z ).
\end{align}
Applying \eqref{14} in \eqref{13} we obtain
\begin{align}\label{16}
\psi''_{\mathrm{u}} ( t_{1} ) &= ( p^{+} + \gamma^{-} - 1 ) t_{1}^{p^{+} - 2} \int_{M} | D \mathrm{u} ( z ) |^{p( z )} \,\, dv_{g} ( z ) \nonumber \\& \hspace*{0.3cm}+ ( q^{+} + \gamma^{-} - 1 ) t_{1}^{q^{+} - 2} \int_{M} \mu ( z ) \,| D \mathrm{u} ( z ) |^{q( z )} \,\, dv_{g} ( z ) \nonumber \\& \hspace*{0.3cm}- \lambda\, ( r^{-} + \gamma^{-} - 1 ) t_{1}^{- \gamma^{-} - 1} \int_{M} g( z ) | \mathrm{u} ( z ) |^{1 - \gamma ( z )} \,\, dv_{g} ( z ). 
\end{align}
On the other hand, by using the same technique, we apply \eqref{15} in \eqref{13}, we deduce that
\begin{equation} \label{166}
\psi''_{\mathrm{u}} ( t_{1} ) = t_{1}^{1 - r^{-}} \zeta'_{\mathrm{u}} ( t_{1} ) > 0.
\end{equation}
From \eqref{16} and \eqref{166} we conclude that
\begin{align*}
\psi''_{\mathrm{u}} ( t_{1} ) =& ( p^{+} + \gamma^{-} - 1 ) t_{1}^{p^{+}} \int_{M} | D \mathrm{u} ( z ) |^{p( z )} \,\, dv_{g} ( z ) \\&+ ( q^{+} + \gamma^{-} - 1 ) t_{1}^{q^{+}} \int_{M} \mu ( z )\,| D \mathrm{u} ( z ) |^{q( z )} \,\, dv_{g} ( z ) \\&- \lambda ( r^{-} + \gamma^{-} - 1 ) t_{1}^{r^{-}} \int_{M} | \mathrm{u} ( z ) |^{r( z )} \,\, dv_{g} ( z ).
\end{align*}
Thus, $$ t_{1} \mathrm{u} \in \mathcal{N}_{\lambda}^{+} \,\,\, \mbox{for all} \,\,\, \lambda \in ( 0, \tilde{\lambda}^{*} ].$$
Hence, $$ \mathcal{N}_{\lambda}^{+} \neq \emptyset.$$
Using the same method for the point $t_{2}$ and according to \eqref{10} and \eqref{11}, we can see that $\mathcal{N}_{\lambda}^{-} \neq \emptyset.$\\
This shows the first assertion of the proposition. Now consider a minimizer sequence $\{ \mathrm{u}_{m} \}_{m \in \mathbb{N}} \subset \mathcal{N}_{\lambda}^{+}$ such as
\begin{equation}\label{17}
J_{\lambda} ( \mathrm{u}_{m} ) \searrow \sigma_{\lambda}^{+} < 0 \,\,\,\, \mbox{as} \,\,\,\, m \rightarrow \infty.
\end{equation}
According to the fact that $ \mathcal{N}_{\lambda}^{+} \subseteq \mathcal{N}_{\lambda}$ and Lemma \ref{lemma1}, we have that 
$$ \{ \mathrm{u}_{m} \}_{m \in \mathbb{N}} \subseteq W_{0}^{1, q( z )} ( M ) \,\,\,\, \mbox{is bounded}.$$
Therefore, we may assume that
\begin{equation}\label{18}
\mathrm{u}_{m} \rightharpoonup \tilde{\mathrm{u}}^{*} \,\,\,\mbox{in} \,\,\, W_{0}^{1, q( z )} ( M ) \,\,\,\, \mbox{and} \,\,\,\, \mathrm{u}_{m} \rightarrow \tilde{\mathrm{u}}^{*} \,\,\,\, \mbox{in} \,\,\,\, L^{r( z )} ( M ).
\end{equation}
From \eqref{17} and \eqref{18} we know that $$ \mathrm{ J}_{\lambda} ( \tilde{\mathrm{u}}^{*} ) \leq \lim_{m \rightarrow + \infty} \inf \, \mathrm{ J}_{\lambda} ( \mathrm{u}_{m} ) < 0 = \mathrm{ J}_{\lambda} ( 0 ).$$
Hence, $$ \tilde{\mathrm{u}}^{*} \neq 0.$$
Arguing by contradiction, suppose that $ \mathrm{u}_{m} \not \rightarrow \tilde{\mathrm{u}}^{*}$ in $W_{0}^{1, q( z )} ( M ).$ Then we will have
\begin{equation}\label{19}
\lim_{m \rightarrow + \infty} \inf \int_{M} | D \mathrm{u}_{m} ( z ) |^{p( z )} \,\, dv_{g} ( z ) > \int_{M} | D \tilde{\mathrm{u}}^{*} ( z ) |^{p( z )} \,\, dv_{g} ( z ).
\end{equation} 
Thus, using \eqref{19}, we have 
\begin{align} \label{20}
\lim_{m \rightarrow + \infty} \inf \psi'_{\mathrm{u}_{m}} ( t_{1} ) &= \lim_{m \rightarrow + \infty} \inf \bigg[ t_{1}^{p^{+} - 1} \int_{M} | D \mathrm{u}_{m} ( z ) |^{p( z )} \,\, dv_{g} ( z ) \nonumber \\& \hspace*{0.2cm}+ t_{1}^{q^{+} - 1} \int_{M} \mu ( z ) \,| D \mathrm{u}_{m} ( z ) |^{q( z )} \,\, dv_{g} ( z ) - t_{1}^{-\gamma^{-}} \int_{M} g( z ) | \mathrm{u}_{m} ( z ) |^{1 - \gamma ( z )} \,\, dv_{g} ( z )\nonumber \\& \hspace*{0.2cm} - \lambda t_{1}^{r^{-} - 1} \int_{M} | \mathrm{u}_{m} ( z ) |^{r( z )} \,\, dv_{g} ( z ) \bigg] \nonumber \\& > t_{1}^{p^{+} - 1} \int_{M} | D \tilde{\mathrm{u}}^{*} ( z ) |^{p( z )} \,\, dv_{g} ( z ) + t_{1}^{q^{+} - 1} \int_{M} \mu ( z )\, | D \tilde{\mathrm{u}}^{*} ( z ) |^{q( z )} \,\, dv_{g} ( z ) \nonumber \\& \hspace*{0.1cm} - t_{1}^{- \gamma^{-}} \int_{M} g( z ) | \tilde{\mathrm{u}}^{*} ( z ) |^{1 - \gamma ( z )} \,\, dv_{g} ( z ) - \lambda t_{1}^{r^{-} - 1} \int_{M} | \tilde{\mathrm{u}}^{*} ( z ) |^{r( z )} \,\, dv_{g} ( z ).
\end{align}
According to \eqref{10} and \eqref{11} we obtain that
\begin{equation}\label{21}
\lim_{m \rightarrow + \infty} \inf \psi'_{\mathrm{u}_{m}} ( t_{1} ) > \psi'_{\tilde{\mathrm{u}}^{*}} ( t_{1} ) = 0.
\end{equation}
Which implies that, there exists $m_{0} \in \mathbb{N} $ such as $$ \psi'_{\mathrm{u}_{m}} ( t_{1} ) > 0 \,\,\, \mbox{for all} \,\,\, m > m_{0}.$$
Since $ \mathrm{u}_{m} \in \mathcal{N}_{\lambda}^{+} \subseteq \mathcal{N}_{\lambda}$ and $ \psi'_{\mathrm{u}_{m}} = t^{r^{-} - 1} \bigg[ \zeta_{\mathrm{u}_{m}} - \lambda \displaystyle \int_{M} | \mathrm{u}_{m} ( z ) |^{r( z )} \,\, dv_{g} ( z )\bigg],$ we have $$ \psi'_{\mathrm{u}_{m}} ( t ) < 0 \,\, \mbox{for all}\,\, t \in ( 0, 1 ) \,\, \mbox{and} \,\, \psi'_{\mathrm{u}_{m}} ( 1 ) = 0.$$ Then , by \eqref{21} we have $t_{1} > 0.$\\
Then function $ \psi'_{\tilde{\mathrm{u}}^{*}} (\cdot)$ is decreasing on $(0, 1 ).$ Hence, from \eqref{21} we have
\begin{equation}\label{22}
\mathrm{ J}_{\lambda} ( t_{1} \tilde{\mathrm{u}}^{*} ) \leq \mathrm{ J}_{\lambda} ( t \tilde{\mathrm{u}}^{*} ) < \sigma_{\lambda}^{+}.
\end{equation}
However, $t_{1} \tilde{\mathrm{u}}^{*} \in \mathcal{N}_{\lambda}^{+}.$ Hence by \eqref{22} we get 
$$ \sigma_{\lambda}^{+} \leq \mathrm{ J}_{\lambda} ( t_{1} \tilde{\mathrm{u}}^{*} ) < \sigma_{\lambda}^{+},$$
which is a contradiction. Hence $ \mathrm{u}_{m} \rightarrow \tilde{\mathrm{u}}^{*} $ in $W_{0}^{1, q( z )} ( M )$ holds, and we have $$ \mathrm{ J}_{\lambda} ( \mathrm{u}_{m} ) \longrightarrow \mathrm{ J}_{\lambda} ( \tilde{\mathrm{u}}^{*} ),$$
which implies that $$ \mathrm{ J}_{\lambda} ( \tilde{\mathrm{u}}^{*} ) = \sigma_{\lambda}^{+}.$$
Since $ \mathrm{u}_{m} \in \mathcal{N}_{\lambda}^{+} $ for all $ m \in \mathbb{N},$ we have
\begin{align*}
&( p^{+} + \gamma^{-} - 1 ) \int_{M} | D \mathrm{u}_{m} ( z ) |^{p( z )} \,\, dv_{g} ( z ) + ( q^{+} + \gamma^{-} - 1 ) \int_{M} \mu ( z )\, | D \mathrm{u}_{m} ( z ) |^{q( z )} \,\, dv_{g} ( z )\\&  > \lambda ( r^{-} + \gamma^{-} - 1 ) \int_{M} | \mathrm{u}_{m} ( z )|^{r( z )} \,\, dv_{g} ( z ).
\end{align*}
Letting $ m \rightarrow + \infty,$ gives
\begin{align}\label{23}
&( p^{+} + \gamma^{-} - 1 ) \int_{M} | D \tilde{\mathrm{u}}^{*} ( z ) |^{p( z )} \,\, dv_{g} ( z ) + ( q^{+} + \gamma^{-} - 1 ) \int_{M} \mu ( z )\,| D \tilde{\mathrm{u}}^{*} ( z ) |^{q( z )} \,\, dv_{g} ( z ) \nonumber\\&  > \lambda ( r^{-} + \gamma^{-} - 1 ) \int_{M} | \tilde{\mathrm{u}}^{*} ( z )|^{r( z )} \,\, dv_{g} ( z ).
\end{align}
Remind that $ \lambda \in ( 0, \tilde{\lambda}^{*} )$ and $ \tilde{\lambda}^{*} \leq \lambda^{*}.$ Then, by Lemma \ref{lemma2}, we find that equality in \eqref{23} cannot hold. Therefore, we conclude that $\tilde{\mathrm{u}}^{*} ( z ) > 0$ for a.a $ z \in M$ with $ \tilde{\mathrm{u}}^{*} \neq 0.$ This completes the proof of Lemma \ref{lemma4}.
\end{proof}
\begin{lemma}\label{lemma5}
Suppose that hypotheses (i)-(v) hold, suppose $ w \in W_{0}^{1, q( z )} ( M )$ and let $ \lambda \in (0, \tilde{\lambda}^{*}].$ Then, there exists $ \alpha > 0 $ such as for all $ t \in [ 0, \alpha ],$ we have $$\mathrm{ J}_{\lambda} ( \tilde{\mathrm{u}}^{*} ) \leq \mathrm{ J}_{\lambda} ( \tilde{\mathrm{u}}^{*} + t w ).$$
\end{lemma}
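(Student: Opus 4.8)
The plan is to argue along the fibres through $\tilde{\mathrm{u}}^{*}$ and a small perturbation of it, using the fibering function $\psi_{\mathrm{u}}(s)=\mathrm{J}_{\lambda}(s\mathrm{u})$ from the proof of Lemma~\ref{lemma4}. Recall from that proof, see \eqref{16} and \eqref{166}, that for $\mathrm{u}\in\mathcal{N}_{\lambda}$ one has $\psi'_{\mathrm{u}}(1)=0$ and that membership $\mathrm{u}\in\mathcal{N}_{\lambda}^{+}$ is equivalent to $\psi''_{\mathrm{u}}(1)>0$; in particular $\psi''_{\tilde{\mathrm{u}}^{*}}(1)>0$ because $\tilde{\mathrm{u}}^{*}\in\mathcal{N}_{\lambda}^{+}$ by Lemma~\ref{lemma4}. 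Put $v_{t}=\tilde{\mathrm{u}}^{*}+tw$ for $t\ge 0$; since $\tilde{\mathrm{u}}^{*}\neq 0$, we have $v_{t}\neq 0$ for all $t$ in some interval $[0,t_{*})$, so $(s,t)\mapsto\psi_{v_{t}}(s)$ is well defined on $(0,\infty)\times[0,t_{*})$. The first step is to check that, for $s$ ranging in a compact subset of $(0,\infty)$, the maps $(s,t)\mapsto\psi_{v_{t}}(s),\ \psi'_{v_{t}}(s),\ \psi''_{v_{t}}(s)$ are jointly continuous: each is a finite sum of integrals of the form $\int_{M}s^{\theta(z)}|Dv_{t}(z)|^{\theta(z)}\,dv_{g}(z)$ (with $\theta\in\{p,q\}$, weighted by $1$ or $\mu$) and $\int_{M}g(z)s^{1-\gamma(z)}|v_{t}(z)|^{1-\gamma(z)}\,dv_{g}(z)$, times algebraic prefactors in $s$, whose integrands are continuous in $(s,t)$ and, on compact $(s,t)$–sets, dominated by fixed $L^{1}(M)$ functions built from $|D\tilde{\mathrm{u}}^{*}|,|Dw|,\tilde{\mathrm{u}}^{*},w,g$ (here the bound \eqref{8}, Theorems~\ref{theo2}--\ref{theo3} and the compactness of $M$ supply the integrability, in particular of the singular term), so dominated convergence applies. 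No $C^{1}$–dependence on $t$ of the singular integral is claimed, which is why an implicit function theorem argument is avoided.

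Using $\psi''_{\tilde{\mathrm{u}}^{*}}(1)>0$ and joint continuity of $\psi''$, I would fix $\delta\in(0,1)$ and then shrink $t_{*}$ so that $\psi''_{v_{t}}(s)>0$ for all $(s,t)\in[1-\delta,1+\delta]\times[0,t_{*})$; thus each $\psi_{v_{t}}$ is strictly convex on $[1-\delta,1+\delta]$ and $\psi'_{v_{t}}$ is strictly increasing there. From $\psi'_{\tilde{\mathrm{u}}^{*}}(1)=0$ we get $\psi'_{\tilde{\mathrm{u}}^{*}}(1-\delta)<0<\psi'_{\tilde{\mathrm{u}}^{*}}(1+\delta)$, and by continuity in $t$ (shrinking $t_{*}$ once more) the same strict inequalities hold for $\psi'_{v_{t}}$ on all of $[0,t_{*})$. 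Hence, for each $t\in[0,t_{*})$ there is a unique $s(t)\in(1-\delta,1+\delta)$ with $\psi'_{v_{t}}(s(t))=0$, i.e. $s(t)v_{t}\in\mathcal{N}_{\lambda}$, and in fact $s(t)v_{t}\in\mathcal{N}_{\lambda}^{+}$ since $\psi''_{v_{t}}(s(t))>0$, with $s(0)=1$. I expect this to be the main obstacle: producing the Nehari projection $s(t)$ and controlling its existence, uniqueness and proximity to $1$ uniformly for small $t$ has to be done via this monotonicity/continuity scheme rather than by differentiation; and it is exactly here that the hypothesis $\lambda\in(0,\tilde{\lambda}^{*}]\subseteq(0,\lambda^{*}]$ is used, because by Lemma~\ref{lemma3} it forces $\mathcal{N}_{\lambda}^{0}=\emptyset$, so $\psi''_{\tilde{\mathrm{u}}^{*}}(1)$ is strictly positive and not merely nonnegative.

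Finally I would compare values along the fibre over $v_{t}$. On $[1-\delta,1+\delta]$ the function $\psi_{v_{t}}$ is strictly convex with its minimum over that interval attained at the interior critical point $s(t)$, so $\psi_{v_{t}}(1)\ge\psi_{v_{t}}(s(t))$. On the other hand $s(t)v_{t}\in\mathcal{N}_{\lambda}$, whence by the definition $\sigma_{\lambda}^{+}=\inf_{\mathcal{N}_{\lambda}}\mathrm{J}_{\lambda}$ and Lemma~\ref{lemma4}, $\psi_{v_{t}}(s(t))=\mathrm{J}_{\lambda}\bigl(s(t)v_{t}\bigr)\ge\sigma_{\lambda}^{+}=\mathrm{J}_{\lambda}(\tilde{\mathrm{u}}^{*})$. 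Setting $\alpha:=t_{*}$ (which depends on $w$, $\tilde{\mathrm{u}}^{*}$ and $\lambda$), these two inequalities yield
$$\mathrm{J}_{\lambda}(\tilde{\mathrm{u}}^{*}+tw)=\psi_{v_{t}}(1)\ \ge\ \psi_{v_{t}}(s(t))\ \ge\ \mathrm{J}_{\lambda}(\tilde{\mathrm{u}}^{*})\qquad\text{for all }t\in[0,\alpha],$$
which is the assertion. The routine points to fill in are that $v_{t}\neq 0$ and $s(t)v_{t}\neq 0$ throughout, that the algebraic prefactors appearing in $\psi'_{v_{t}}$ and $\psi''_{v_{t}}$ are exactly those used in \eqref{13}--\eqref{16}, and that the sign condition $\psi''_{v_{t}}(s(t))>0$ does certify $s(t)v_{t}\in\mathcal{N}_{\lambda}^{+}$.
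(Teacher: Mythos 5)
Your argument is correct, but it is not the route the paper takes, and the difference is substantive. The paper's proof introduces the single scalar function
$\eta_{w}(t)$ obtained by evaluating the quantity that defines $\mathcal{N}_{\lambda}^{+}$ at $\tilde{\mathrm{u}}^{*}+tw$, observes $\eta_{w}(0)>0$ because $\tilde{\mathrm{u}}^{*}\in\mathcal{N}_{\lambda}^{+}$, invokes continuity to get $\eta_{w}(t)>0$ on $[0,\alpha]$, and from this alone concludes $\tilde{\mathrm{u}}^{*}+tw\in\mathcal{N}_{\lambda}^{+}$, whence $\mathrm{J}_{\lambda}(\tilde{\mathrm{u}}^{*})=\sigma_{\lambda}^{+}\leq \mathrm{J}_{\lambda}(\tilde{\mathrm{u}}^{*}+tw)$. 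That shortcut only controls the \emph{open} inequality in the definition of $\mathcal{N}_{\lambda}^{+}$; it does not address the equality constraint $\tilde{\mathrm{u}}^{*}+tw\in\mathcal{N}_{\lambda}$, which is not stable under the perturbation $t\mapsto \tilde{\mathrm{u}}^{*}+tw$. Your proof supplies exactly what that step is missing: instead of claiming the perturbed function stays on the Nehari manifold, you project $v_{t}=\tilde{\mathrm{u}}^{*}+tw$ back onto it by producing the fibre critical point $s(t)$ near $1$ (via positivity of $\psi''$ on a fixed interval, monotonicity of $\psi'$ there, and joint continuity in $(s,t)$), certify $s(t)v_{t}\in\mathcal{N}_{\lambda}^{+}$, and then compare $\psi_{v_{t}}(1)\geq\psi_{v_{t}}(s(t))\geq\sigma_{\lambda}^{+}=\mathrm{J}_{\lambda}(\tilde{\mathrm{u}}^{*})$ using local convexity of the fibre map; this is in the spirit of the source papers \cite{papageorgiou2020positive,liu2021existence} and is the more defensible argument. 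Two small remarks: the strict positivity of $\psi''_{\tilde{\mathrm{u}}^{*}}(1)$ already comes from the strict inequality in the definition of $\mathcal{N}_{\lambda}^{+}$ (Lemma~\ref{lemma4}), so the appeal to Lemma~\ref{lemma3} at that point is not needed; and, like the paper, you are implicitly using the constant-exponent prefactors of \eqref{13}--\eqref{16} for the fibre derivatives, which is only an approximation when $p,q,r,\gamma$ genuinely vary -- but that convention is the paper's, not yours.
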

\begin{proof}
We introduce the function $\eta_{w} : [0, + \infty ] \longrightarrow \mathbb{R}$ defined by 
\begin{align}
\eta_{w} ( t ) =& \,( p^{+} + \gamma^{-} - 1 ) \int_{M} | D \tilde{\mathrm{u}}^{*} ( z ) + t w( z ) |^{p( z )} \,\, dv_{g} ( z )\nonumber \\& + ( q^{+} + \gamma^{-} - 1 ) \int_{M} \mu ( z ) \, | D \tilde{\mathrm{u}}^{*} ( z ) + t w( z ) |^{q( z )} \,\, dv_{g} ( z )\nonumber \\& - \lambda ( r^{-} + \gamma^{-} - 1 ) \int_{M} | \tilde{\mathrm{u}}^{*} ( z ) + t w( z ) |^{r( z )} \,\, dv_{g} ( z ). 
\end{align}
Since $ \tilde{\mathrm{u}}^{*} \in \mathcal{N}_{\lambda}^{+},$ we have $\eta_{w} ( 0 ) > 0.$ According to the continuity of $\eta_{w} (\cdot )$ we find $ \alpha > 0$ such as $$ \eta_{w} ( t ) > 0 \,\,\, \mbox{for all} \,\,\, t \in [ 0, \alpha ].$$
Thus, $ \tilde{\mathrm{u}}^{*} + t w \in \mathcal{N}_{\lambda}^{+}$ for all $ t \in [ 0, \alpha ].$ Hence, by Lemma \ref{lemma4}, we deduce that
$$ \sigma_{\lambda}^{+} = \mathrm{ J}_{\lambda} ( \tilde{\mathrm{u}}^{*} ) \leq \mathrm{ J}_{\lambda} ( \tilde{\mathrm{u}}^{*} + t w ) \,\,\, \mbox{for all} \,\,\,\, t \in [0, \alpha ].$$
This ends the demonstration.
\end{proof}
The next Lemma proves that $\mathcal{N}_{\lambda}^{+} $ is a natural constraint for the energy functional $\mathrm{ J}_{\lambda},$ see Papageorgiou-R{\u{a}}dulescu-Repov{\v{s}} (\cite{papageorgiou2019nonlinear} p.426 ).
\begin{lemma}\label{lemma6}
Under assumptions (i)-(v), let $ \lambda \in ( 0,  \tilde{\lambda}^{*}].$ Then $\tilde{\mathrm{u}}^{*}$ is a weak solution of $( \mathcal{P} ).$
\end{lemma}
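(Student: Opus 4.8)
The plan is to prove that the minimizer $\tilde{\mathrm{u}}^{*}\in\mathcal{N}_{\lambda}^{+}$ obtained in Lemma \ref{lemma4} satisfies the weak formulation of the Definition above, i.e.\ that $\mathcal{N}_{\lambda}^{+}$ is a natural constraint for $\mathrm{J}_{\lambda}$. Because the singular term makes $\mathrm{J}_{\lambda}$ only one-sidedly differentiable, I would not try to annihilate a derivative directly; instead I would combine the local minimality furnished by Lemma \ref{lemma5} with a Gâteaux-type computation \emph{along nonnegative directions only} (where the singular term behaves monotonically), and then recover the full weak identity by a truncation argument. Throughout, write $\langle\mathcal{A}(\tilde{\mathrm{u}}^{*}),\psi\rangle:=\int_{M}\big(|D\tilde{\mathrm{u}}^{*}|^{p(z)-2}D\tilde{\mathrm{u}}^{*}+\mu(z)|D\tilde{\mathrm{u}}^{*}|^{q(z)-2}D\tilde{\mathrm{u}}^{*}\big)D\psi\,dv_{g}$.

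\emph{Step 1 (inequality along $\varphi\ge 0$).} Fix $\varphi\in W_{0}^{1,q(z)}(M)$ with $\varphi\ge 0$. By Lemma \ref{lemma5} the map $t\mapsto\mathrm{J}_{\lambda}(\tilde{\mathrm{u}}^{*}+t\varphi)$ attains its minimum over some $[0,\alpha]$ at $t=0$, so its difference quotient is $\ge 0$ for $t\in(0,\alpha]$. As $t\to0^{+}$, the three regular terms of $\mathrm{J}_{\lambda}$ (exponents $p(z),q(z),r(z)$) have difference quotients converging — by the mean value theorem and dominated convergence, using $|D\tilde{\mathrm{u}}^{*}|^{p(z)},|D\tilde{\mathrm{u}}^{*}|^{q(z)}\in L^{1}(M)$ and $\tilde{\mathrm{u}}^{*}\in L^{r(z)}(M)$ (Theorems \ref{theo2}--\ref{theo3}) — to $\langle\mathcal{A}(\tilde{\mathrm{u}}^{*}),\varphi\rangle-\lambda\int_{M}(\tilde{\mathrm{u}}^{*})^{r(z)-1}\varphi\,dv_{g}$. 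For the singular term, since $\tilde{\mathrm{u}}^{*}>0$ a.e.\ (Lemma \ref{lemma4}) and $\varphi\ge0$, concavity of $s\mapsto s^{1-\gamma(z)}$ makes the difference quotients of $|\tilde{\mathrm{u}}^{*}+t\varphi|^{1-\gamma(z)}$ monotone in $t$, so by the monotone convergence theorem
\[
\frac{1}{t}\int_{M}\frac{g(z)}{1-\gamma(z)}\Big(|\tilde{\mathrm{u}}^{*}+t\varphi|^{1-\gamma(z)}-|\tilde{\mathrm{u}}^{*}|^{1-\gamma(z)}\Big)\,dv_{g}\;\longrightarrow\;\int_{M}g(z)(\tilde{\mathrm{u}}^{*})^{-\gamma(z)}\varphi\,dv_{g}\in(0,+\infty].
\]
Taking $\liminf_{t\to0^{+}}$ in the nonnegativity of the difference quotient of $\mathrm{J}_{\lambda}$, the finiteness of the regular part forces the limit above to be finite — in particular $g(z)(\tilde{\mathrm{u}}^{*})^{-\gamma(z)}\varphi\in L^{1}(M)$, so the weak formulation is well posed — and yields
\[
\langle\mathcal{A}(\tilde{\mathrm{u}}^{*}),\varphi\rangle\;\ge\;\int_{M}g(z)(\tilde{\mathrm{u}}^{*})^{-\gamma(z)}\varphi\,dv_{g}+\lambda\int_{M}(\tilde{\mathrm{u}}^{*})^{r(z)-1}\varphi\,dv_{g}\qquad(\star)
\]
for every $0\le\varphi\in W_{0}^{1,q(z)}(M)$. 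Choosing $\varphi=\tilde{\mathrm{u}}^{*}$ and invoking $\tilde{\mathrm{u}}^{*}\in\mathcal{N}_{\lambda}$ shows that $(\star)$ is an equality in that case.

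\emph{Step 2 (from $\varphi\ge0$ to signed $\varphi$).} For $\varphi\in\mathcal{D}(M)$ and $\varepsilon>0$, insert the admissible test function $(\tilde{\mathrm{u}}^{*}+\varepsilon\varphi)^{+}\ge0$ into $(\star)$, write $(\tilde{\mathrm{u}}^{*}+\varepsilon\varphi)^{+}=\tilde{\mathrm{u}}^{*}+\varepsilon\varphi+(\tilde{\mathrm{u}}^{*}+\varepsilon\varphi)^{-}$, use $\tilde{\mathrm{u}}^{*}\in\mathcal{N}_{\lambda}$ to cancel the $\tilde{\mathrm{u}}^{*}$-contribution, and discard the nonnegative terms carrying $(\tilde{\mathrm{u}}^{*}+\varepsilon\varphi)^{-}$ on the right-hand side; this gives
\[
\varepsilon\Big(\langle\mathcal{A}(\tilde{\mathrm{u}}^{*}),\varphi\rangle-\int_{M}g(z)(\tilde{\mathrm{u}}^{*})^{-\gamma(z)}\varphi\,dv_{g}-\lambda\int_{M}(\tilde{\mathrm{u}}^{*})^{r(z)-1}\varphi\,dv_{g}\Big)+\langle\mathcal{A}(\tilde{\mathrm{u}}^{*}),(\tilde{\mathrm{u}}^{*}+\varepsilon\varphi)^{-}\rangle\;\ge\;0.
\]
Now $(\tilde{\mathrm{u}}^{*}+\varepsilon\varphi)^{-}$ is supported in $E_{\varepsilon}:=\{\tilde{\mathrm{u}}^{*}+\varepsilon\varphi<0\}\subset\{0<\tilde{\mathrm{u}}^{*}<\varepsilon|\varphi|\}$, whose measure tends to $0$ as $\varepsilon\to0^{+}$ because $\tilde{\mathrm{u}}^{*}>0$ a.e.; on $E_{\varepsilon}$ one has $D(\tilde{\mathrm{u}}^{*}+\varepsilon\varphi)^{-}=-(D\tilde{\mathrm{u}}^{*}+\varepsilon D\varphi)$, so discarding the resulting nonpositive principal part gives
\[
\langle\mathcal{A}(\tilde{\mathrm{u}}^{*}),(\tilde{\mathrm{u}}^{*}+\varepsilon\varphi)^{-}\rangle\;\le\;\varepsilon\int_{E_{\varepsilon}}\Big(|D\tilde{\mathrm{u}}^{*}|^{p(z)-1}+\mu(z)|D\tilde{\mathrm{u}}^{*}|^{q(z)-1}\Big)|D\varphi|\,dv_{g}.
\]
Dividing by $\varepsilon$ and letting $\varepsilon\to0^{+}$ (the last integral being $o(1)$ by absolute continuity of the Lebesgue integral, since $|D\tilde{\mathrm{u}}^{*}|^{p(z)-1},|D\tilde{\mathrm{u}}^{*}|^{q(z)-1}\in L^{1}(M)$ and $D\varphi$ is bounded with compact support) yields $\langle\mathcal{A}(\tilde{\mathrm{u}}^{*}),\varphi\rangle\ge\int_{M}g(z)(\tilde{\mathrm{u}}^{*})^{-\gamma(z)}\varphi\,dv_{g}+\lambda\int_{M}(\tilde{\mathrm{u}}^{*})^{r(z)-1}\varphi\,dv_{g}$ for every $\varphi\in\mathcal{D}(M)$. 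Replacing $\varphi$ by $-\varphi$ gives the reverse inequality, hence equality for all $\varphi\in\mathcal{D}(M)$; since $\tilde{\mathrm{u}}^{*}>0$ a.e., this is precisely the weak formulation, so $\tilde{\mathrm{u}}^{*}$ is a weak solution of $(\mathcal{P})$.

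\emph{Expected main obstacle.} The delicate point is entirely the singular term: it destroys the $C^{1}$ structure and only permits control of increasing perturbations, which forces Step 1 to be restricted to $\varphi\ge0$ and to rely on monotone convergence rather than on differentiating $\mathrm{J}_{\lambda}$. Converting the one-sided inequality $(\star)$ into the genuine (signed) weak identity is then the real work — via the $(\tilde{\mathrm{u}}^{*}+\varepsilon\varphi)^{+}$ truncation together with the estimate of $\langle\mathcal{A}(\tilde{\mathrm{u}}^{*}),\cdot\rangle$ on the shrinking sets $E_{\varepsilon}$ and the limit $\varepsilon\to0^{+}$. The integrability $g(z)(\tilde{\mathrm{u}}^{*})^{-\gamma(z)}\varphi\in L^{1}(M)$ needed for the definition to make sense drops out of the same $\liminf$ argument, using only $\tilde{\mathrm{u}}^{*}>0$ a.e.\ and the Sobolev embeddings and Hölder inequality recalled in Section \ref{sec2}.
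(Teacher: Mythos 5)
Your proof is correct and is in fact more complete than the one the paper gives. Both arguments begin identically: invoke Lemma \ref{lemma5} to get $\mathrm{J}_{\lambda}(\tilde{\mathrm{u}}^{*}+tw)\ge \mathrm{J}_{\lambda}(\tilde{\mathrm{u}}^{*})$ for small $t\ge 0$, form the difference quotient, and let $t\to 0^{+}$. The paper does this directly for an arbitrary \emph{signed} $w$ and then simply declares that ``since $w$ is arbitrary, equality must hold.'' That shortcut conceals exactly the difficulty you isolate: for signed $w$ the difference quotient of the singular term need not converge to $\int_{M}g(z)(\tilde{\mathrm{u}}^{*})^{-\gamma(z)}w\,dv_{g}$ (there is no monotonicity and no obvious dominating function on $\{w<0\}$), and it is not clear a priori that $g(z)(\tilde{\mathrm{u}}^{*})^{-\gamma(z)}w\in L^{1}(M)$ so that the limiting inequality even makes sense. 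Your two-step scheme --- first the one-sided inequality $(\star)$ for $\varphi\ge 0$ via monotone convergence, which simultaneously delivers the $L^{1}$-integrability of the singular product, and then the passage to signed test functions by inserting $(\tilde{\mathrm{u}}^{*}+\varepsilon\varphi)^{+}$, cancelling the $\tilde{\mathrm{u}}^{*}$-contribution through $\tilde{\mathrm{u}}^{*}\in\mathcal{N}_{\lambda}$, and estimating $\langle\mathcal{A}(\tilde{\mathrm{u}}^{*}),\cdot\rangle$ on the shrinking sets $E_{\varepsilon}$ --- is the standard rigorous route (it is the argument of \cite{liu2021existence}, which this paper extends), and it supplies precisely the justification that the paper's own two-line limit passage omits. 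The only point worth flagging is that for Step 2 you need $(\star)$ for every nonnegative $\varphi\in W_{0}^{1,q(z)}(M)$, not merely for smooth $\varphi$; since your Step 1 is already stated at that level of generality, the argument is internally consistent.
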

\begin{proof}
Let $ w \in W_{0}^{1, q( z )} ( M ),$ by Lemma \ref{lemma5} we have for all $ t \in [0, \alpha]$ that $$ 0 < \mathrm{ J}_{\lambda} ( \tilde{\mathrm{u}}^{*} + t w ) - \mathrm{ J}_{\lambda} ( \tilde{\mathrm{u}}^{*} ).$$
Then, 
\begin{align}\label{25}
&\frac{1}{1 - \gamma^{-}} \int_{M} g( z ) \big( | \tilde{\mathrm{u}}^{*} ( z ) + t w( z ) |^{1 - \gamma ( z )} - | \tilde{\mathrm{u}}^{*} ( z ) |^{1 - \gamma ( z )} \, \big) \,\, dv_{g} ( z ) \nonumber \\&\leq \frac{1}{p^{-}} \int_{M} \big( | D \tilde{\mathrm{u}}^{*} ( z ) + t D w( z ) |^{p( z )} - | D \tilde{\mathrm{u}}^{*} ( z ) |^{p( z )} \big) \,\, dv_{g} ( z ) \nonumber \\& \hspace*{0.3cm} + \frac{1}{q^{-}} \int_{M} \mu ( z ) \,\big( | D \tilde{\mathrm{u}}^{*} ( z ) + t D w( z ) |^{q( z )} - | D \tilde{\mathrm{u}}^{*} ( z ) |^{q( z )} \big) \,\, dv_{g} ( z ) \nonumber \\& \hspace*{0.3cm}- \frac{\lambda}{q^{-}} \int_{M} \big( | \tilde{\mathrm{u}}^{*} ( z ) + t w( z ) |^{r( z )} - | \tilde{\mathrm{u}}^{*} ( z ) |^{r( z )} \big) \,\, dv_{g} ( z ).
\end{align}
Dividing the above inequality by $t,$ then $ t \rightarrow 0^{+}.$ We deduce that
\begin{align*}
\int_{M} g( z ) ( \tilde{\mathrm{u}}^{*} )^{-\gamma ( z )} w( z ) \,\, dv_{g} ( z ) \leq& \int_{M} | D \tilde{\mathrm{u}}^{*} ( z ) |^{p( z ) - 2} D \tilde{\mathrm{u}}^{*} ( z ) \,.\, D w( z ) \,\, dv_{g} ( z ) \\&+ \int_{M} \mu ( z ) \, |D \tilde{\mathrm{u}}^{*} ( z ) |^{q( z ) - 2} D \tilde{\mathrm{u}}^{*} ( z ) \,.\, D w( z ) \,\, dv_{g} ( z ) \\& - \lambda \int_{M} ( \tilde{\mathrm{u}}^{*} ( z ) )^{r( z ) - 1} \,.\, w \,\, dv_{g} ( z ).
\end{align*}
Since, $ w \in W_{0}^{1, q( z )} ( M )$ is arbitrary. Then that equality must hold, and so $\tilde{\mathrm{u}}^{*} $ is a weak solutions of $ ( \mathcal{P} )$ for all $ \lambda \in ( 0, \tilde{\lambda}^{*} ].$
\end{proof}
Now, using the manifold $ \mathcal{N}_{\lambda}^{-},$ we will achieve a second weak solution when the parameter $\lambda > 0$ is sufficiently small.
\begin{lemma}\label{lemma7}
If hypotheses (i)-(v) are satisfied. Then there exists $\tilde{\lambda}^{*}_{0} \in ( 0, \tilde{\lambda}^{*}]$ such as for all $ \lambda \in (0, \tilde{\lambda}^{*}_{0}]$ we have $\mathrm{ J}_{\lambda} \bigg|_{\mathcal{N}_{\lambda}^{-}} \geq 0.$
\end{lemma}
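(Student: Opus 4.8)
The plan is to show that on the ``large-norm'' branch $\mathcal{N}_{\lambda}^{-}$ every element is forced to have norm tending to infinity as $\lambda\to0^{+}$, and then to feed this into the coercivity estimate already obtained in the proof of Lemma~\ref{lemma1}. A direct route, substituting the singular term through the Nehari identity into $\mathrm{J}_{\lambda}(\mathrm{u})$, only produces a lower bound of the form $-\alpha\int_{M}|D\mathrm{u}|^{p(z)}dv_{g}-\beta\int_{M}\mu|D\mathrm{u}|^{q(z)}dv_{g}+\delta\lambda\int_{M}|\mathrm{u}|^{r(z)}dv_{g}$ with $\alpha,\beta,\delta>0$, whose sign is not controlled for moderate $\|\mathrm{u}\|$; this is why the norm-blow-up route is needed.

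First I would fix $\mathrm{u}\in\mathcal{N}_{\lambda}^{-}$ and use its defining strict inequality, in which the gradient part is dominated by $\lambda\int_{M}(r(z)+\gamma(z)-1)|\mathrm{u}(z)|^{r(z)}dv_{g}(z)$. On the left-hand side I keep only the $q$-contribution: since $\mu(z)\geq\mu_{0}$ and $q^{-}+\gamma^{-}-1>0$, it bounds below a fixed positive multiple of $\int_{M}|D\mathrm{u}(z)|^{q(z)}dv_{g}(z)$ (the $p$-term is kept when $p^{-}-\gamma^{+}-1>0$, and otherwise simply discarded, since it is added to a nonnegative quantity). On the right-hand side I estimate $\int_{M}(r(z)+\gamma(z)-1)|\mathrm{u}|^{r(z)}dv_{g}\leq(r^{+}+\gamma^{+}-1)\rho_{r(\cdot)}(\mathrm{u})$, then use Theorem~\ref{theo3} with Proposition~\ref{prop7} to get $\rho_{r(\cdot)}(\mathrm{u})\leq c\max\{\|\mathrm{u}\|^{r^{+}},\|\mathrm{u}\|^{r^{-}}\}$; symmetrically, Proposition~\ref{prop7} together with the Poincaré inequality gives a lower bound of the gradient modular by a positive multiple of $\min\{\|\mathrm{u}\|^{q^{+}},\|\mathrm{u}\|^{q^{-}}\}$. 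Separating the cases $\|\mathrm{u}\|\leq1$ and $\|\mathrm{u}\|>1$: the first gives $\|\mathrm{u}\|^{q^{+}-r^{-}}<c\lambda$, impossible for small $\lambda$ because $q^{+}<r^{-}$; the second gives $\|\mathrm{u}\|\geq R(\lambda):=(c/\lambda)^{1/(r^{+}-q^{-})}$, with $R(\lambda)\to+\infty$ as $\lambda\to0^{+}$ by \eqref{1}. Hence there is $\lambda_{1}\in(0,\tilde{\lambda}^{*}]$ such that for all $\lambda\in(0,\lambda_{1}]$ and all $\mathrm{u}\in\mathcal{N}_{\lambda}^{-}$ one has $\|\mathrm{u}\|>R(\lambda)>1$.

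Next I would invoke the estimate obtained inside the proof of Lemma~\ref{lemma1}: for $\mathrm{u}\in\mathcal{N}_{\lambda}$ with $\|\mathrm{u}\|>1$ one has $\mathrm{J}_{\lambda}(\mathrm{u})\geq c_{1}\|\mathrm{u}\|^{p^{+}}-c_{2}\|\mathrm{u}\|^{1-\gamma^{+}}$ with $c_{1}>0$ and $c_{2}\geq0$ independent of $\mathrm{u}$ and $\lambda$ (if that estimate already carries a nonnegative coefficient in front of $\|\mathrm{u}\|^{1-\gamma^{+}}$, the conclusion is immediate). Since $1-\gamma^{+}<1<p^{+}$ by (i) and (iv), the right-hand side is nonnegative whenever $\|\mathrm{u}\|\geq\rho_{1}:=(c_{2}/c_{1})^{1/(p^{+}-1+\gamma^{+})}$. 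I then choose $\tilde{\lambda}^{*}_{0}\in(0,\lambda_{1}]$ small enough that $R(\tilde{\lambda}^{*}_{0})\geq\rho_{1}$, possible because $R$ is nonincreasing and $R(\lambda)\to+\infty$; for $\lambda\in(0,\tilde{\lambda}^{*}_{0}]$ and $\mathrm{u}\in\mathcal{N}_{\lambda}^{-}$ this yields $\|\mathrm{u}\|>R(\lambda)\geq R(\tilde{\lambda}^{*}_{0})\geq\rho_{1}$, hence $\mathrm{J}_{\lambda}(\mathrm{u})\geq0$, i.e. $\mathrm{J}_{\lambda}|_{\mathcal{N}_{\lambda}^{-}}\geq0$.

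The main obstacle is entirely in the first step: one must cope with the possibly indefinite sign of $p(z)-\gamma(z)-1$ in the definition of $\mathcal{N}_{\lambda}^{-}$, and carefully pass back and forth between the modulars $\rho_{p(\cdot)},\rho_{q(\cdot)},\rho_{r(\cdot)}$ and the norm through Proposition~\ref{prop7}, whose inequalities flip according to whether the relevant norms lie below or above $1$, all while keeping every constant independent of $\lambda$ so that the final threshold $\tilde{\lambda}^{*}_{0}$ genuinely absorbs them. Once the blow-up $\|\mathrm{u}\|\to\infty$ on $\mathcal{N}_{\lambda}^{-}$ is secured, the rest is a soft consequence of the coercivity already proved in Lemma~\ref{lemma1}.
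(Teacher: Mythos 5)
Your first step is essentially the paper's: from the defining inequality of $\mathcal{N}_{\lambda}^{-}$ one drops the $p$-term, keeps the weighted $q$-term, and combines the Poincar\'e inequality, Proposition \ref{prop7} and the embedding into $L^{r(z)}$ to force $\| \mathrm{u} \| \geq (c/\lambda)^{1/(r^{+}-q^{-})} \to +\infty$ as $\lambda \to 0^{+}$ (this is the paper's \eqref{26}, stated there for $\| \mathrm{u} \|_{r(z)}$). Where you genuinely diverge is the second step. The paper argues by contradiction: assuming $\mathrm{J}_{\lambda}(\mathrm{u})<0$ and eliminating the $q$-modular via the Nehari identity \eqref{28}, it derives the upper bound \eqref{29}, $\| \mathrm{u} \|_{r(z)} \leq c_{10}\lambda^{-1/(r^{-}+\gamma^{-}-1)}$, and then compares the two powers of $1/\lambda$, using $r^{-}-q^{+}<r^{-}+\gamma^{-}-1$ to reach a contradiction for small $\lambda$. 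You instead feed the norm blow-up directly into the $\lambda$-independent coercivity bound of Lemma \ref{lemma1} (its constants really are independent of $\lambda$, since the parametric term is absorbed by the Nehari constraint), which is nonnegative beyond a fixed threshold $\rho_{1}$. Both routes work; yours is shorter, avoids the second round of modular manipulations, and makes the mechanism transparent (for small $\lambda$, the set $\mathcal{N}_{\lambda}^{-}$ lives entirely in the region where coercivity has already won) — at the price of leaning on the corrected form of Lemma \ref{lemma1}'s final display, whose ``$+\,c_{2}\| \mathrm{u}\|^{1-\gamma^{+}}$'' must indeed be read with a minus sign, as you rightly flag. One caveat on your first step: the parenthetical claim that a possibly negative $p$-term can be ``simply discarded'' from the left of the strict inequality is not valid as stated — removing a negative summand from the smaller side weakens the bound you need ($A_{q}\leq A_{p}+A_{q}<\lambda B$ requires $A_{p}\geq 0$). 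This only arises because of the stray minus sign in the paper's definition of $\mathcal{N}_{\lambda}^{\pm}$; the coefficient actually used throughout the proofs, including this one, is $p(z)+\gamma(z)-1>0$, so the step is harmless once that typo is corrected.
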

\begin{proof}
Let $ \mathrm{u} \in \mathcal{N}_{\lambda}^{-}.$ According to Lemma \ref{lemma4}, Theorem \ref{theo3} and the definition of $\mathcal{N}_{\lambda}^{-}$ we obtain that
\begin{align*}
( p^{+} + \gamma^{-} - 1 ) || D \mathrm{u} ||_{p( z )}^{p^{+}} + ( q^{+} + \gamma^{-} - 1 ) \,|| D \mathrm{u} ||_{q( z ), \mu( z )}^{q^{+}} < \lambda \, ( r^{-} + \gamma^{-} - 1) || \mathrm{u} ||_{r( z )}^{r^{-}}.
\end{align*}
Then, $$ \lambda ( r^{-} + \gamma^{-} - 1 ) || \mathrm{u} ||_{r( z )}^{r^{-}} > ( q^{+} + \gamma^{-} - 1 ) \, \mu_{0} \, || D \mathrm{u} ||_{q( z )}^{q^{+}} > \mu_{0} \, ( q^{+} + \gamma^{-} - 1 ) \,. \, c \, || \mathrm{u} ||^{q^{+}}_{q( z )},$$
where, $c$ being the Poincaré canstant. Thus, by Theorem \ref{theo3} we have that $$ || \mathrm{u} ||^{r^{-} - q^{+}}_{r( x )} > \frac{( q^{+} + \gamma^{-} - 1 ) \, \mu_{0} \,.\,c\,.\, c_{8}}{\lambda \, ( r^{-} + \gamma^{-} - 1 )},$$
where, $c_{8} $ being the constant of the embedding Theorem \ref{theo3}. Hence, 
\begin{equation}\label{26}
|| \mathrm{u} ||_{r( z )} > \bigg[ \frac{( q^{+} + \gamma^{-} - 1 ) \, \mu_{0} \,.\, c\,.\,c_{8}}{\lambda \, ( r^{-} + \gamma^{-} - 1 )} \, \bigg]^{\frac{1}{r^{-} - q^{+}}}.
\end{equation}
Arguing by contradiction, suppose that the Lemma is not true. Then, we can find $ \mathrm{u} \in \mathcal{N}_{\lambda}^{-}$ such as $\mathrm{ J}_{\lambda} ( \mathrm{u} ) < 0,$ that is 
\begin{align}\label{27}
&\frac{1}{p^{-}} || D \mathrm{u} ||_{p( z )}^{p^{+}} + \frac{1}{q^{-}} || D \mathrm{u} ||_{q( z ), \mu ( z )}^{q^{+}} - \frac{1}{1 - \gamma^{-}} \int_{M} g( z ) | \mathrm{u} ( z ) |^{1 - \gamma ( z )} \,\, dv_{g} ( z )\nonumber \\& - \frac{\lambda}{r^{+}} || \mathrm{u} ||_{r( z )}^{r^{-}} < 0.
\end{align}
Since $ \mathrm{u} \in \mathcal{N}_{\lambda}^{-} \subseteq \mathcal{N}_{\lambda},$ we know that
\begin{equation}\label{28}
|| D \mathrm{u} ||_{q( z ), \mu ( z )}^{q^{+}} = - || D \mathrm{u} ||_{p( z )}^{p^{+}} + \int_{M} g( z ) | \mathrm{u} ( z ) |^{1 - \gamma ( z )} \,\, dv_{g} ( z ) + \lambda || \mathrm{u} ||_{r( z )}^{r^{-}},
\end{equation}
from \eqref{27} and \eqref{28} we have 
\begin{align*}
&\big( \frac{1}{p^{-}} - \frac{1}{q^{-}} \big) || D \mathrm{u} ||_{p( z )}^{p^{+}} + \big( \frac{1}{q^{-}} - \frac{1}{1 - \gamma^{-}} \big) \int_{M} g( z ) | \mathrm{u} ( z )|^{1 - \gamma ( z )} \,\,dv_{g} ( z )\nonumber \\& + \lambda \, \big( \frac{1}{q^{-}} - \frac{1}{r^{+}} \big) || \mathrm{u} ||_{r( z )}^{r^{-}} < 0,
\end{align*}
which implies that
$$ \lambda \, \big( \frac{1}{q^{-}} - \frac{1}{r^{+}} \big) || \mathrm{u} ||_{r( z )}^{r^{-}} < \big( \frac{1}{1 - \gamma^{-}} - \frac{1}{q^{-}} \big) \, c_{9} \, || \mathrm{u} ||_{r( z )}^{1 - \gamma^{-}} \,\,\, \, \mbox{for some} \,\,\,\, c_{9} > 0.$$
Thus, since $q^{-} < p^{-} < r^{+} $ we have
$$ || \mathrm{u} ||_{r( z )}^{r^{-} + \gamma^{-} - 1} \leq \frac{c_{9} \, ( q^{-} + \gamma^{-} - 1 ) \, r^{+}}{\lambda \, ( 1 - \gamma^{-} ) ( r^{+} - q^{-} )}.$$
Hence, 
\begin{equation}\label{29}
|| \mathrm{u} ||_{r( z )} \leq c_{10} \, \bigg( \frac{1}{\lambda} \bigg)^{\frac{1}{r^{-} + \gamma^{-} - 1}} \,\,\, \mbox{for some} \,\,\,\, c_{10} > 0.
\end{equation}
Applying \eqref{29} in \eqref{26} we get
$$ c_{11} \, \bigg( \frac{1}{\lambda} \bigg)^{\frac{1}{r^{-} - q^{+}}} \leq c_{10} \bigg( \frac{1}{\lambda} \bigg)^{\frac{1}{r^{-} + \gamma^{-} - 1}} \,\,\,\, \mbox{with} \,\,\,\, c_{11} = \bigg( \frac{\mu_{0}\,.\,c \,.\, c_{8}\, ( q^{+} + \gamma^{-} - 1 )}{r^{-} + \gamma^{-} - 1} \bigg)^{\frac{1}{r^{-} - q^{+}}}. $$
Hence, $$ 0 < \frac{c_{11}}{c_{10}} < \lambda^{\frac{1}{r^{-} - q^{+}} - \frac{1}{r^{-} + \gamma^{-} - 1}} = \lambda^{\frac{q^{+} + \gamma^{-} - 1}{( r^{-} - q^{+} ) ( r^{-} + \gamma^{-} - 1 )}} \longrightarrow 0 \,\,\, \mbox{as}\,\, \lambda \rightarrow 0^{+}.$$
Since $1 < q^{+} < r^{-} $ and $\gamma ( \cdot ) \in ( 0, 1 ),$ which is a contradiction.\\
Thus, we conclude that we can find $ \tilde{\lambda}^{*}_{0} \in (0, \tilde{\lambda}^{*} ]$ such as for all $\lambda \in ( 0, \tilde{\lambda}^{*}_{0}]$ we have $$ \mathrm{ J}_{\lambda} \bigg|_{\mathcal{N}_{\lambda}^{-}} \geq 0.$$
\end{proof}
\begin{lemma}\label{lemma8}
Under assumptions (i)-(v), let $\lambda \in ( 0, \tilde{\lambda}^{*}_{0}].$ Then, there exists $\tilde{v}^{*} \in \mathcal{N}_{\lambda}^{-}$ with $ \tilde{v}^{*} \geq 0$ such as $$ \sigma_{\lambda}^{-} = \inf_{\mathcal{N}_{\lambda}^{-}} = \mathrm{ J}_{\lambda} ( \tilde{v}^{*} ) > 0.$$
\end{lemma}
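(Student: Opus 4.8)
The plan is to follow the scheme of the proof of Lemma \ref{lemma4}, but now minimizing over the component $\mathcal{N}_{\lambda}^{-}$ and exploiting the Sobolev lower bound \eqref{26} together with Lemma \ref{lemma7}. \emph{Setup and minimizing sequence.} By Lemma \ref{lemma7} we have $\sigma_{\lambda}^{-}=\inf_{\mathcal{N}_{\lambda}^{-}}\mathrm{J}_{\lambda}\ge 0$, so $\sigma_{\lambda}^{-}$ is finite. Take $\{v_m\}\subset\mathcal{N}_{\lambda}^{-}$ with $\mathrm{J}_{\lambda}(v_m)\to\sigma_{\lambda}^{-}$; since $|D|v_m||=|Dv_m|$ a.e.\ and all the quantities entering $\mathrm{J}_{\lambda}$ and the definition of $\mathcal{N}_{\lambda}^{-}$ depend only on $|Dv_m|$ and $|v_m|$, we may assume $v_m\ge 0$. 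As $\mathcal{N}_{\lambda}^{-}\subseteq\mathcal{N}_{\lambda}$, Lemma \ref{lemma1} shows $\{v_m\}$ is bounded in $W_0^{1,q(z)}(M)$, so along a subsequence $v_m\rightharpoonup\tilde{v}^{*}$ in $W_0^{1,q(z)}(M)$, $v_m\to\tilde{v}^{*}$ in $L^{r(z)}(M)$ (Theorem \ref{theo3}) and a.e., with $\tilde{v}^{*}\ge 0$. Passing to the limit in \eqref{26}, which holds for every $v_m\in\mathcal{N}_{\lambda}^{-}$, gives a uniform lower bound for $\|v_m\|_{r(z)}$, hence $\|\tilde{v}^{*}\|_{r(z)}>0$ and $\tilde{v}^{*}\neq 0$.

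\emph{Strong convergence.} Since $\tilde{v}^{*}\neq 0$, the construction in the proof of Lemma \ref{lemma4} (the maps $\tilde{\zeta}_{\tilde{v}^{*}}$ and $\psi_{\tilde{v}^{*}}$) provides $t_2=t_2(\tilde{v}^{*})>0$ with $t_2\tilde{v}^{*}\in\mathcal{N}_{\lambda}^{-}$, i.e.\ $\psi'_{\tilde{v}^{*}}(t_2)=0$ and $\psi''_{\tilde{v}^{*}}(t_2)<0$. Suppose, for contradiction, that $v_m\not\to\tilde{v}^{*}$ in $W_0^{1,q(z)}(M)$; then, as in \eqref{19}, $\liminf_m\int_M|Dv_m|^{p(z)}\,dv_g(z)>\int_M|D\tilde{v}^{*}|^{p(z)}\,dv_g(z)$. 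Using weak lower semicontinuity of the gradient modulars together with, from the compact embeddings of Theorems \ref{theo2}--\ref{theo3}, the convergence of $\int_M g\,|v_m|^{1-\gamma(z)}\,dv_g(z)$ and $\int_M|v_m|^{r(z)}\,dv_g(z)$, one obtains as in \eqref{20}--\eqref{21}
\[
\liminf_{m\to\infty}\psi'_{v_m}(t_2)>\psi'_{\tilde{v}^{*}}(t_2)=0,
\]
so $\psi'_{v_m}(t_2)>0$ for all large $m$. For $v_m\in\mathcal{N}_{\lambda}^{-}$ the fibering function $\psi_{v_m}$ decreases on $(0,t_1(v_m))$, increases on $(t_1(v_m),1)$ and decreases on $(1,+\infty)$, so $\psi'_{v_m}>0$ only on $(t_1(v_m),1)$; hence $t_2\in(t_1(v_m),1)$ and, by monotonicity, $\mathrm{J}_{\lambda}(t_2v_m)=\psi_{v_m}(t_2)<\psi_{v_m}(1)=\mathrm{J}_{\lambda}(v_m)$. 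Letting $m\to\infty$ and using the strict inequality above for the $p$-term,
\[
\mathrm{J}_{\lambda}(t_2\tilde{v}^{*})=\psi_{\tilde{v}^{*}}(t_2)<\liminf_{m\to\infty}\psi_{v_m}(t_2)\le\liminf_{m\to\infty}\mathrm{J}_{\lambda}(v_m)=\sigma_{\lambda}^{-},
\]
which contradicts $t_2\tilde{v}^{*}\in\mathcal{N}_{\lambda}^{-}$. Therefore $v_m\to\tilde{v}^{*}$ in $W_0^{1,q(z)}(M)$, and $\mathrm{J}_{\lambda}(\tilde{v}^{*})=\sigma_{\lambda}^{-}$.

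\emph{Membership and sign.} Passing to the limit in $\psi'_{v_m}(1)=0$ and $\psi''_{v_m}(1)<0$ yields $\psi'_{\tilde{v}^{*}}(1)=0$ and $\psi''_{\tilde{v}^{*}}(1)\le 0$, i.e.\ $\tilde{v}^{*}\in\mathcal{N}_{\lambda}\cap(\mathcal{N}_{\lambda}^{-}\cup\mathcal{N}_{\lambda}^{0})$; since $\mathcal{N}_{\lambda}^{0}=\emptyset$ by Lemma \ref{lemma3} (shrinking $\tilde{\lambda}^{*}_{0}$ below $\lambda^{*}$ if necessary), we conclude $\tilde{v}^{*}\in\mathcal{N}_{\lambda}^{-}$, so $\sigma_{\lambda}^{-}=\mathrm{J}_{\lambda}(\tilde{v}^{*})$ is attained, with $\tilde{v}^{*}\ge 0$. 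Finally $\sigma_{\lambda}^{-}\ge 0$ by Lemma \ref{lemma7}; to obtain $\sigma_{\lambda}^{-}>0$ it suffices, the infimum being attained, to rule out $\mathrm{J}_{\lambda}(\tilde{v}^{*})=0$. But the chain of estimates in the proof of Lemma \ref{lemma7} uses only the hypothesis $\mathrm{J}_{\lambda}(u)\le 0$ and already produces, through \eqref{29} and \eqref{26}, the relation $0<c_{11}/c_{10}<\lambda^{(q^{+}+\gamma^{-}-1)/((r^{-}-q^{+})(r^{-}+\gamma^{-}-1))}\to 0^{+}$, which is impossible for $\lambda$ in a sufficiently small interval; after a further shrinking of $\tilde{\lambda}^{*}_{0}$ this gives $\sigma_{\lambda}^{-}=\mathrm{J}_{\lambda}(\tilde{v}^{*})>0$.

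\emph{Main obstacle.} The delicate step is the strong convergence: the strict inequality $\mathrm{J}_{\lambda}(t_2\tilde{v}^{*})<\sigma_{\lambda}^{-}$ requires first knowing that $t_2(\tilde{v}^{*})$ lies \emph{strictly between} the two zeros $t_1(v_m)<1$ of $\psi'_{v_m}$ — so that $\psi_{v_m}$ is increasing at $t_2$ and $\psi_{v_m}(t_2)<\mathrm{J}_{\lambda}(v_m)$ — and then transferring the strict modular inequality through the limit. Keeping track of strict versus non-strict inequalities, and of the fact that for elements of $\mathcal{N}_{\lambda}^{-}$ the fibering map attains a local \emph{maximum} (rather than the minimum treated in Lemma \ref{lemma4}), is where the argument demands care.
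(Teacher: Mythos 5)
Your proposal is correct and follows essentially the same route as the paper, whose own proof of this lemma is only a two-line sketch deferring to the method of Lemma \ref{lemma4} (minimizing sequence, boundedness via Lemma \ref{lemma1}, weak/compact convergence, and the fibering construction at the point $t_2$ from \eqref{10}--\eqref{11}). You in fact supply details the paper glosses over — notably the use of \eqref{26} to get $\tilde{v}^{*}\neq 0$ (the trick $\mathrm{J}_{\lambda}(\tilde{\mathrm{u}}^{*})<0=\mathrm{J}_{\lambda}(0)$ from Lemma \ref{lemma4} is unavailable here) and the upgrade from $\sigma_{\lambda}^{-}\ge 0$ to $\sigma_{\lambda}^{-}>0$ by rerunning the estimates of Lemma \ref{lemma7} with a non-strict inequality.
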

\begin{proof}
Using the same method as Lemma \ref{lemma4}. If $\{ v_{m}\}_{m \in \mathbb{N}} \subseteq \mathcal{N}_{\lambda}^{-}$ is a minimizing sequence, then, by Lemma \ref{lemma1}, we have that $\{ \mathrm{u}_{m} \}_{m \in \mathbb{N}} \subseteq W_{0}^{1, q( z )} ( M )$ is bounded. Then, we may assume that
$$ v_{m} \rightharpoonup \tilde{v}^{*} \,\,\, \mbox{weakly in} \,\,\, W_{0}^{1, q( z )} ( M ) \,\,\, \mbox{and} \,\,\, v_{m} \rightarrow \tilde{v}^{*} \,\,\, \mbox{in} \,\,\, L^{r( z )} ( M ).$$
From \eqref{10} and \eqref{11} we can find $0 < t_{2} $ such as
\begin{equation}\label{30}
\tilde{\zeta'}_{\tilde{v}^{*}} ( t_{2} ) < 0 \,\,\, \mbox{and} \,\,\, \tilde{\zeta}_{\tilde{v}^{*}} ( t_{2} ) = \lambda || \tilde{v}^{*} ||_{r( z )}^{r^{-}}.
\end{equation}
We contend as in the proof of Lemma \ref{lemma4} and using \eqref{30}, we obtain that $ \tilde{v}^{*} \in \mathcal{N}_{\lambda}^{-}, \,\ \tilde{v}^{*} \geq 0, \,\,\, \sigma_{\lambda}^{-} = \mathrm{ J}_{\lambda} ( \tilde{v}^{*} ).$
\end{proof}
\begin{lemma}\label{lemma9}
Under assumptions (i)-(v) and $\lambda \in ( 0, \tilde{\lambda}^{*} ], \,\, \tilde{v}^{*} $ is a weak solution of the problem $( \mathcal{P} )$.
\end{lemma}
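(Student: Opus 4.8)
The plan is to reproduce, for the $\mathcal{N}_{\lambda}^{-}$-minimizer $\tilde{v}^{*}$ constructed in Lemma \ref{lemma8} (more precisely, on the subinterval $(0,\tilde{\lambda}^{*}_{0}]$ of $(0,\tilde{\lambda}^{*}]$ where $\tilde{v}^{*}$ is produced), the two-step argument that yielded the first solution: an analogue of Lemma \ref{lemma5} asserting that $\mathrm{J}_{\lambda}$ does not decrease along suitable curves issuing from $\tilde{v}^{*}$ and staying in $\mathcal{N}_{\lambda}^{-}$, followed by an analogue of Lemma \ref{lemma6} in which one passes to the Euler--Lagrange inequality and then to equality. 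In other words, the point is that $\mathcal{N}_{\lambda}^{-}$ is a natural constraint for $\mathrm{J}_{\lambda}$.

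\emph{Step 1: projecting perturbations onto $\mathcal{N}_{\lambda}^{-}$.} Fix $w\in W_{0}^{1,q(z)}(M)$ and set $u_{s}=\tilde{v}^{*}+s\,w$ for small $s\geq0$. Since $\tilde{v}^{*}\in\mathcal{N}_{\lambda}^{-}$ and $\mathcal{N}_{\lambda}^{0}=\emptyset$ for the admissible parameters (Lemma \ref{lemma3}), the fibering map $t\mapsto\psi_{\tilde{v}^{*}}(t)=\mathrm{J}_{\lambda}(t\tilde{v}^{*})$ has a \emph{nondegenerate} maximum at $t=1$; equivalently $\tilde{\zeta}'_{\tilde{v}^{*}}(1)<0$, with $\tilde{\zeta}_{\tilde{v}^{*}}(1)=\lambda\|\tilde{v}^{*}\|_{r(z)}^{r^{-}}$ (cf. \eqref{10}, \eqref{11} and \eqref{30}). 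Because the maps $t\mapsto\tilde{\zeta}_{u}(t)$ and $t\mapsto\psi_{u}(t)$ are $C^{2}$ on $(0,\infty)$ (established in Lemma \ref{lemma4}) and $\tilde{\zeta}'_{\tilde{v}^{*}}(1)\neq0$, the implicit function theorem provides $\alpha>0$ and a $C^{1}$ function $s\mapsto t(s)$ on $[0,\alpha]$ with $t(0)=1$ and $t(s)\,u_{s}\in\mathcal{N}_{\lambda}^{-}$ for every $s\in[0,\alpha]$ (the sign $\tilde{\zeta}'_{u_{s}}(t(s))<0$ persists for small $s$ by continuity).

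\emph{Step 2: Euler--Lagrange inequality and equality.} As $\tilde{v}^{*}=t(0)u_{0}$ realizes $\sigma_{\lambda}^{-}=\inf_{\mathcal{N}_{\lambda}^{-}}\mathrm{J}_{\lambda}$ (Lemma \ref{lemma8}) and $t(s)u_{s}\in\mathcal{N}_{\lambda}^{-}$, we get $\mathrm{J}_{\lambda}(\tilde{v}^{*})\leq\psi_{u_{s}}(t(s))$ for all $s\in[0,\alpha]$. Dividing $\psi_{u_{s}}(t(s))-\psi_{\tilde{v}^{*}}(1)\geq0$ by $s$ and letting $s\to0^{+}$, the $p(z)$-, $q(z)$- and $r(z)$-integrals are differentiated in the usual way, the singular term is treated exactly as in Lemma \ref{lemma6} (using the concavity of $t\mapsto t^{1-\gamma}$ together with Fatou's lemma), and the chain-rule term carrying $t'(0)$ drops out because $\psi'_{\tilde{v}^{*}}(1)=0$, i.e. $\tilde{v}^{*}\in\mathcal{N}_{\lambda}$. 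Using $\tilde{v}^{*}\geq0$ (Lemma \ref{lemma8}) and the integrability of $(\tilde{v}^{*})^{-\gamma(z)}w$ recalled in the remark following the definition of a weak solution, what remains is
\begin{align*}
&\int_{M}\Big(|D\tilde{v}^{*}|^{p(z)-2}D\tilde{v}^{*}+\mu(z)\,|D\tilde{v}^{*}|^{q(z)-2}D\tilde{v}^{*}\Big)\cdot Dw\,dv_{g}\\
&\qquad-\int_{M}g(z)\,(\tilde{v}^{*})^{-\gamma(z)}w\,dv_{g}-\lambda\int_{M}(\tilde{v}^{*})^{r(z)-1}w\,dv_{g}\ \geq\ 0
\end{align*}
for every $w\in W_{0}^{1,q(z)}(M)$. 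Replacing $w$ by $-w$ gives the reverse inequality, hence equality for all such $w$; this is precisely the weak formulation of $(\mathcal{P})$, so $\tilde{v}^{*}$ is a weak solution.

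\emph{Main obstacle.} The crux is Step 1: one must project $u_{s}=\tilde{v}^{*}+s\,w$ back onto $\mathcal{N}_{\lambda}^{-}$ by a single-valued, nondegenerate, $C^{1}$ map $s\mapsto t(s)$. This relies on the \emph{strict} inequality $\tilde{\zeta}'_{\tilde{v}^{*}}(1)<0$ --- which is exactly the emptiness of $\mathcal{N}_{\lambda}^{0}$ (Lemma \ref{lemma3}) --- and on the $C^{2}$-regularity of the fibering maps even though $\mathrm{J}_{\lambda}$ itself fails to be $C^{1}$: the singular contribution to $\psi_{u}$, namely $-\int_{M}\frac{g(z)}{1-\gamma(z)}\,t^{1-\gamma(z)}|u|^{1-\gamma(z)}\,dv_{g}$, is smooth in $t>0$, so the construction is legitimate but should be spelled out. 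A secondary technical point, also implicit in Lemma \ref{lemma6} and to be handled with care here, is the passage to the limit in the singular difference quotient, for which the local $L^{1}$-control of $(\tilde{v}^{*})^{-\gamma(z)}w$ is what makes the integral and its limit meaningful.
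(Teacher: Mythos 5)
Your proposal is correct in substance, but it is worth recording that the paper's own proof of this lemma is a single sentence: ``use the same reasoning as in the proofs of Lemmas \ref{lemma5} and \ref{lemma6}.'' What the authors mean by that is the straight-line version of the argument: for $\tilde{v}^{*}\in\mathcal{N}_{\lambda}^{-}$ the quantity $\eta_{w}(0)$ is now strictly negative, continuity gives $\eta_{w}(t)<0$ on some $[0,\alpha]$, hence $\tilde{v}^{*}+tw\in\mathcal{N}_{\lambda}^{-}$, and since $\tilde{v}^{*}$ minimizes $\mathrm{J}_{\lambda}$ over $\mathcal{N}_{\lambda}^{-}$ by Lemma \ref{lemma8} one gets $\mathrm{J}_{\lambda}(\tilde{v}^{*})\leq\mathrm{J}_{\lambda}(\tilde{v}^{*}+tw)$ and then repeats the difference-quotient argument of Lemma \ref{lemma6} verbatim. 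You instead reproject $\tilde{v}^{*}+sw$ onto $\mathcal{N}_{\lambda}^{-}$ via an implicit-function-theorem map $s\mapsto t(s)$ with $t(0)=1$. This is a genuinely different (and more honest) route: the straight-line argument, as written in Lemma \ref{lemma5}, only checks the sign condition defining $\mathcal{N}_{\lambda}^{\pm}$ and tacitly assumes that $\tilde{v}^{*}+tw$ stays on the Nehari manifold $\mathcal{N}_{\lambda}$ itself, which a generic perturbation does not; your reprojection, whose nondegeneracy is supplied by $\tilde{\zeta}'_{\tilde{v}^{*}}(1)<0$ (i.e.\ by $\mathcal{N}_{\lambda}^{0}=\emptyset$), is the standard repair and is exactly what is done in the reference \cite{liu2021existence} that this paper generalizes. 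The one caveat in your version is the regularity in $s$ needed to apply the implicit function theorem: the $s$-derivative of the singular term $s\mapsto\int_{M}g(z)|\tilde{v}^{*}+sw|^{1-\gamma(z)}\,dv_{g}$ formally involves $(\tilde{v}^{*})^{-\gamma(z)}w$, whose integrability is part of what one is trying to prove; the usual remedy is to work with a merely continuous selection $t(s)$ and one-sided difference quotients (monotone convergence for the concave singular term), which you gesture at but should make explicit. Since the cross term carrying $t'(0)$ is killed by $\psi'_{\tilde{v}^{*}}(1)=0$ in either case, the conclusion stands.
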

\begin{proof}
To demonstrate this Lemma, we use the same reasoning as in the proofs of Lemmas \ref{lemma5} and \ref{lemma6}.
\end{proof}
\section*{Conclusion}
According to the above Lemmas, our problem $( \mathcal{P} )$ has at least two positive solutions $\tilde{\mathrm{u}}^{*}, \, \tilde{v}^{*} \in W_{0}^{1, q( z )} ( M ),$ such as $\mathrm{ J}_{\lambda} ( \tilde{\mathrm{u}}^{*} ) < 0 \leq \mathrm{ J}_{\lambda} ( \tilde{v}^{*} )$ for all $ \lambda \in (0, \, \tilde{\lambda}^{*}_{0}]$ where $\tilde{\lambda}^{*}_{0} > 0.$
\section*{Authors' contributions}
The authors declare that their contributions are equal.
\section*{Acknowledgments} 
This paper has been supported by the RUDN University Strategic Academic Leadership Program and P.R.I.N. 2019.\\
The authors would like to thank the anonymous referees for the valuable suggestions and comments which improved the quality of the presentation.




%
%



\end{document}